\numberwithin{equation}{section}
\newtheorem{thm}{Theorem}[section]
\newtheorem{ptn}[thm]{Proposition}
\newtheorem{clm}[thm]{Claim}
\newtheorem{cor}[thm]{Corollary}
\newtheorem{lem}[thm]{Lemma}
\newtheorem{pro}[thm]{Problem}
\newtheorem{con}[thm]{Conjecture}
\theoremstyle{definition}
\newtheorem{dfn}[thm]{Definition}
\theoremstyle{remark}
\newtheorem{rmk}[thm]{Remark}
\newcommand{\wt}{\widetilde}
\newcommand{\A}{\mathcal{A}}
\newcommand{\C}{\mathbb{C}}
\newcommand{\D}{\mathcal{D}}
\newcommand{\F}{\mathbb{F}}
\newcommand{\PP}{\mathbb{P}}
\newcommand{\OO}{\mathcal{O}}
\newcommand{\Q}{\mathbb{Q}}
\newcommand{\Z}{\mathbb{Z}}
\newcommand{\cS}{\mathcal{S}}
\DeclareMathOperator{\Aut}{Aut}
\DeclareMathOperator{\Spec}{\mathbf{Spec}}
\DeclareMathOperator{\spec}{Spec}
\DeclareMathOperator{\ord}{ord}
\DeclareMathOperator{\vol}{vol} \DeclareMathOperator{\Supp}{Supp}
\DeclareMathOperator{\tail}{tail} \DeclareMathOperator{\Loc}{Loc}
\DeclareMathOperator{\TV}{\wt{TV}} \DeclareMathOperator{\tv}{TV}
\DeclareMathOperator{\bc}{bc}
\title{The Cheltsov--Rubinstein problem for strongly asymptotically log del Pezzo surfaces\thanks{This is part of the author's PhD work. The author is grateful to his advisor Y. A. Rubinstein for suggesting this problem, and a careful reading and several corrections. The author is also thankful to K. Fujita, I. Cheltsov and K. Zhang for their comments and informing the author of the results in \cite{ACC,Liu}. The research is supported by NSF grants DMS-1906370, 2204347, and University of Maryland Summer Research Fellowship for Summer 2022.}}
\author{Chenzi Jin}
\date{May 2023}
\def\thanks#1{\protected@xdef\@thanks{\@thanks\protect\footnotetext{#1}}}
\begin{document}

\maketitle

\begin{abstract}

The notion of (strongly) asymptotically log Fano varieties was
introduced in 2013 by Cheltsov--Rubinstein, who posed the problem of
classifying all strongly asymptotically log del Pezzo surfaces with
smooth boundary that admit K\"ahler--Einstein edge metrics. Thanks
to the Cheltsov--Rubinstein classification, this amounts to
considering 10 families. In 8 families the problem has been solved
by work of Cheltsov--Rubinstein, Fujita and Mazzeo--Rubinstein. The
remaining 2 families are rational surfaces parameterized by the
self-intersection of the 0-section $n$ and the number of blow-ups
$m$. By Cheltsov--Rubinstein, Cheltsov--Rubinstein--Zhang and
Fujita, K\"ahler--Einstein edge metrics exist when either $m=0$ or
$m\geq3$ for the first family, and the cases $m=1,2$ have been
studied by Fujita--Liu--S\"u\ss--Zhang--Zhuang and Fujita. The final
remaining family, denoted $\mathrm{(II.6A.n.m)}$ in the
Cheltsov--Rubinstein classification, is more difficult as the
boundary consists of two components, unlike any of the other 9
families. It is the generalization of the football to complex
surfaces with the pair $\mathrm{(II.6A.0.0)}$ being exactly the
football times $\PP^1$. The pairs $\mathrm{(II.6A.n.0)}$ have been
completely understood by the work of Rubinstein--Zhang using the
$\PP^1$-bundle structure of Hirzebruch surfaces. This article
studies the family $\mathrm{(II.6A.n.m)}$ for $m\geq1$. These pairs
no longer have a $\PP^1$-bundle structure and are therefore more
difficult to tackle. The main result is a necessary and sufficient
condition on the angles for the existence of K\"ahler--Einstein edge
metrics, generalizing the Rubinstein--Zhang condition. Thus, we
resolve the Cheltsov--Rubinstein problem for strongly asymptotically
log del Pezzo surfaces.

\end{abstract}

\tableofcontents

\section{Introduction}

The purpose of this article is to complete the solution of the
Cheltsov--Rubinstein problem \cite{CR,Rub} on the classfication of
strongly asymptotically log del Pezzo surfaces admitting a
K\"ahler--Einstein edge metric. This has been a topic of intense
research. See, e.g., \cite{CR,CR2,CRZ,FLSZZ,Rub,Rub2,RZ}. As a
consequence of these works, one (infinite) family of pairs remains
to be understood: the pairs $\mathrm{(II.6A.n.m)}$ (see Definition
\ref{setupdef}). In this article we treat this family completely and
hence solve the Cheltsov--Rubinstein problem for strongly
asymptotically log del Pezzo surfaces. The Cheltsov--Rubinstein
problem for general asymptotically log del Pezzo surfaces is very
challenging and remains wide open.

To explain the problem and our work let us introduce some notation.
Let $X$ be a K\"ahler manifold. Fix a divisor $D=D_1+\cdots+D_r$
with simple normal crossings on $X$, and angle vector
$\beta=(\beta_1,\ldots,\beta_r)\in(0,1]^r$. A K\"ahler--Einstein
edge metric is a smooth K\"ahler--Einstein metric on $\displaystyle
X\setminus\bigcup_iD_i$ that has a conical singularity of angle
$2\pi\beta_i$ along $D_i$ \cite{JMR,Rub,Tia}. The existence of a
K\"ahler--Einstein edge metric with positive Ricci curvature imposes
the cohomological condition that
$$
-K_X-\sum_{i=1}^r\left(1-\beta_i\right)D_i
$$
is ample. When $\beta_i=1$, the metric reduces to the ordinary
(smooth) K\"ahler--Einstein metric. On the other hand, the case when
the $\beta_i$'s are \textit{small} is also of much interest; this
corresponds to the notion of asymptotically log Fano varieties,
introduced by Cheltsov--Rubinstein \cite{CR}:
\begin{dfn}
$(X,D_1+\cdots+D_r)$ is \textit{(strongly) asymptotically log Fano}
if $\displaystyle-K_X-\sum_{i=1}^r\left(1-\beta_i\right)D_i$ is
ample for (all) sufficiently small $\beta\in(0,1]^r$. In dimension
2, these are also called \textit{(strongly) asymptotically log del
Pezzo surfaces}.
\end{dfn}

Strongly asymptotically log del Pezzo surfaces have been classified
by Cheltsov--Rubinstein \cite{CR,Rub2}. A natural question to ask is
the existence of K\"ahler--Einstein edge metrics on such surfaces,
namely the following uniformization problem posed by
Cheltsov--Rubinstein \cite[Problem 9.1]{Rub}.
\begin{pro}\label{mainpro}
Determine which strongly asymptotically log del Pezzo surfaces admit
K\"ahler--Einstein edge metrics for sufficiently small $\beta$.
\end{pro}

Assume the boundary is smooth. By Cheltsov--Rubinstein
classification, we are reduced to the following 10 families.

\begin{thm}
\begin{rm}\cite[Theorems 2.1 and 3.1]{CR}.\end{rm} Let $S$ be a smooth surface, $C_1,\ldots,C_r$ be disjoint smooth irreducible curves on $S$. Then $(S,C_1+\cdots+C_r)$ is strongly asymptotically log del Pezzo if and only if it is one of the following pairs.
\begin{itemize}[leftmargin=!,labelindent=0pt,labelwidth=\widthof{$\mathrm{(II.6A.n.m)}$}]
    \item[$\mathrm{(I.3A)}$] $S\cong\F_1$, and $C_1\in|2(Z_1+F)|$;
    \item[$\mathrm{(I.4A)}$] $S\cong\PP^1\times\PP^1$, and $C_1$ is a smooth elliptic curve of bi-degree $(2,2)$;
    \item[$\mathrm{(I.5.m)}$] $S$ is a blow-up of $\PP^2$ at $0\leq m\leq8$ distinct points on a smooth cubic elliptic curve such that $-K_S$ is ample, and $C_1$ is the proper transform of the elliptic curve;
    \item[$\mathrm{(I.6B.m)}$] $S$ is a blow-up of $\PP^2$ at $m\geq0$ distinct points on a smooth conic, and $C_1$ is the proper transform of the conic;
    \item[$\mathrm{(I.6C.m)}$] $S$ is a blow-up of $\PP^2$ at $m\geq0$ distinct points on a line, and $C_1$ is the proper transform of the line;
    \item[$\mathrm{(I.7.n.m)}$] $S$ is a blow-up of $\F_n$ at $m\geq0$ distinct points on $Z_n$, and $C_1$ is the proper transform of $Z_n$, for any $n\geq0$;
    \item[$\mathrm{(I.8B.m)}$] $S$ is a blow-up of $\F_1$ at $m\geq0$ distinct points on a smooth rational curve in the linear system $|Z_1+F|$, and $C_1$ is the proper transform of the rational curve;
    \item[$\mathrm{(I.9B.m)}$] $S$ is a blow-up of $\PP^1\times\PP^1$ at $m\geq0$ distinct points on a smooth rational curve of bi-degree $(2,1)$ with no two of them on a single curve of bi-degree $(0,1)$, and $C_1$ is the proper transform of the rational curve;
    \item[$\mathrm{(I.9C.m)}$] $S$ is a blow-up of $\PP^1\times\PP^1$ at $m\geq0$ distinct points on a smooth rational curve of bi-degree $(1,1)$, and $C_1$ is the proper transform of the rational curve;
    \item[$\mathrm{(II.6A.n.m)}$] $S$ is a blow-up of $\F_n$ at $m\geq0$ distinct points on $Z_n$ and a smooth rational curve in the linear system $|Z_n+nF|$ with no two of them on a single curve in the linear system $|F|$, $C_1$ is the proper transform of $Z_n$, and $C_2$ is the proper transform of the rational curve, for any $n\geq0$.
\end{itemize}
\end{thm}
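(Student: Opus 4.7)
The plan has two directions. For the easier implication (any listed pair is strongly asymptotically log del Pezzo), I would proceed family by family: the Mori cone of $S$ is generated by finitely many explicit curves (exceptional divisors of the blow-down, fiber classes, the $C_i$ themselves, etc.), and one exhibits a small $\beta$ with $-K_S - \sum_i (1 - \beta_i) C_i$ having positive intersection against each generator, invoking Nakai--Moishezon.

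For the hard implication, the first step is a standard reduction. Passing to the limit $\beta \to 0$ inside the ample cone, $L_0 := -K_S - \sum_i C_i$ is nef. Ampleness of $L_\beta$ for some small $\beta$ makes $L_\beta$ big, so $-K_S = L_\beta + \sum_i (1 - \beta_i) C_i$ is big and $S$ is a smooth rational surface by Castelnuovo--Enriques. Adjunction applied to each $C_j$ (cross terms vanish by disjointness) yields $L_0 \cdot C_j = 2 - 2g(C_j) \geq 0$, hence $g(C_j) \in \{0, 1\}$. The sharper condition $L_\beta \cdot C_j > 0$ for all sufficiently small $\beta$ forces $g(C_j) = 0$, or else $g(C_j) = 1$ with $C_j^2 > 0$; in the latter case Hodge index, applied to $C_j^\perp$ inside $\mathrm{NS}(S) \otimes \mathbb{R}$, rules out a second disjoint irreducible curve with $L_0$-degree zero and pins down $r = 1$.

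Next I would extract a birational contraction $\pi : S \to S_{\min}$ with $S_{\min} \in \{\PP^2\} \cup \{\F_n : n \neq 1\}$. The crucial observation is that every $(-1)$-curve $E \subset S$ satisfies $L_0 \cdot E \geq 0$, i.e., $(\sum_i C_i) \cdot E \leq 1$, so $E$ is either disjoint from $\bigcup_i C_i$ (and contractible without disturbing the boundary combinatorics) or meets exactly one $C_i$ transversally at one point (so contracting $E$ sends $C_i$ to a smooth curve through a new point). Iterating, $\pi$ factors as a sequence of point blow-ups with centers lying on the successive proper transforms of $C_1 \cup \cdots \cup C_r$, and each $C_i$ is the proper transform of a smooth irreducible curve $\bar C_i$ on $S_{\min}$. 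Bounding the degree (respectively bidegree on $\F_n$) of $\bar C_i$ via adjunction on the minimal model and translating the blow-up constraints back to $S$ then produces precisely the nine one-boundary families $\mathrm{(I.\ast)}$.

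The main obstacle is the analysis for $r = 2$, i.e., the two-boundary family $\mathrm{(II.6A.n.m)}$, since the two disjoint rational boundary curves interact nontrivially with the ruling of the underlying Hirzebruch surface and the configuration no longer has a residual $\PP^1$-bundle structure once $m \geq 1$. Here I would argue in three stages: first, that $r \geq 3$ is impossible by iterating the $(-1)$-curve analysis together with Hodge index on the span of $K_S$ and the $C_i$; second, that on $S_{\min}$ the only pair of disjoint smooth rational curves compatible with the nefness of $L_0$ is $(Z_n, Z_n + nF)$ up to linear equivalence, forcing $S_{\min} = \F_n$ for some $n \geq 0$; and third, that the hypothesis that no two blown-up centers lie on a single curve of class $F$ is forced, because otherwise the proper transform of the offending fiber becomes a $(-1)$-curve $E$ meeting both boundary components, violating $(\sum_i C_i) \cdot E \leq 1$ and hence the nefness of $L_0$. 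This interplay between the two boundary components and the fiber class of the ruling is the most delicate part of the classification.
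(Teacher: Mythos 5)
The paper does not prove this theorem; it is imported verbatim from \cite[Theorems 2.1 and 3.1]{CR}, so there is no ``paper's own proof'' to compare against. What you have written is a blind reconstruction of the Cheltsov--Rubinstein argument, and at the level of overall strategy it does match their approach: pass to the nef limit $L_0 = -K_S - \sum C_i$, use adjunction and disjointness to get $L_0\cdot C_j = 2-2g(C_j)\ge 0$, and then run a birational reduction to a minimal rational surface keeping track of how the boundary transforms. However, several of the steps you state as immediate are in fact where the real work lies, and two of them have genuine gaps.

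First, the deduction ``$-K_S$ big $\Rightarrow S$ rational by Castelnuovo--Enriques'' is not a one-liner. Bigness of $-K_S$ gives $P_n(S)=0$ for all $n\ge 1$, hence $\kappa(S)=-\infty$, but Castelnuovo's criterion also requires $q(S)=0$, and killing the irregularity needs an additional argument (e.g.\ analyzing the Albanese fibration and using that $L_0$ is nef with $L_\beta$ ample to exclude birationally ruled surfaces over a positive-genus base). As written this is a gap, not a routine citation.

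Second, your Hodge index argument for the genus-one case only rules out a \emph{second} boundary component with $g=1$ (two disjoint curves of positive self-intersection cannot coexist). It does not rule out a genus-one component together with a rational component, which is what ``pins down $r=1$'' actually requires; that needs a separate argument (in \cite{CR} it comes from a more careful analysis of the linear system $|-(K_S+C)|$ and the structure of the elliptic fibration). Relatedly, your $(-1)$-curve dichotomy omits the possibility $E=C_i$ itself, where $(\sum_j C_j)\cdot E = -1$: such a boundary $(-1)$-curve does occur in the listed families and contracting it changes the boundary, so it has to be treated explicitly rather than folded into ``disjoint from the boundary.'' Finally, the case enumeration over $S_{\min}\in\{\PP^2\}\cup\{\F_n\}$ and the bound $r\le 2$, which you describe in one sentence, is in fact the bulk of \cite[Theorems 2.1, 3.1]{CR} and is where most of the bookkeeping (and the specific constraints on the blown-up points, such as ``no two on a single fiber'' for $\mathrm{(II.6A.n.m)}$) comes from. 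Your observation that this last constraint follows from nefness of $L_0$ against the proper transform of a bad fiber is correct and is indeed the right mechanism.
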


As a step towards Problem \ref{mainpro}, Cheltsov--Rubinstein
proposed the following conjecture \cite[Conjecture 1.6]{CR}.
\begin{con}\label{CR}
Let $(S,C)$ be a strongly asymptotically log del Pezzo surface with
$r=1$. Then $(S,C)$ admits a K\"ahler--Einstein edge metric for
sufficiently small $\beta$ if and only if $(K_S+C)^2=0$.
\end{con}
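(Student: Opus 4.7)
The plan is to exploit a dichotomy in the log K-stability thresholds $\delta(S,(1-\beta)C)$ as $\beta\to 0$, indexed by whether $(K_S+C)^2$ vanishes. As a preliminary, strong asymptotic log Fano-ness ensures that $-(K_S+C)$ is nef, being a limit of ample classes, so on a smooth projective surface $(K_S+C)^2\ge 0$ by standard nef theory, and the conjecture then asserts that the two sides of this dichotomy correspond exactly to existence versus non-existence of KE edge metrics for arbitrarily small $\beta$.

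For the necessity direction, the decisive test is the boundary valuation $v=\mathrm{ord}_C$ itself. Setting $L_\beta:=-K_S-(1-\beta)C$, a direct computation gives
\[
A_{(S,(1-\beta)C)}(v)=\beta,\qquad S_\beta(v)=\frac{1}{\mathrm{vol}(L_\beta)}\int_0^{\tau(\beta)}\mathrm{vol}(L_\beta-tC)\,dt.
\]
When $(K_S+C)^2>0$, $\mathrm{vol}(L_\beta)\to(K_S+C)^2>0$ as $\beta\to 0$, and the pseudo-effective threshold $\tau(\beta)$ stays bounded below by a positive constant; hence $S_\beta(v)$ converges to a strictly positive limit while $A$ vanishes linearly in $\beta$. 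Therefore $A/S_\beta\to 0$, forcing $\delta(S,(1-\beta)C)<1$ for small $\beta$ and ruling out KE edge by the log Fano K-stability criterion of Berman--Boucksom--Jonsson and Li.

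For the sufficiency direction, when $(K_S+C)^2=0$ the same computation yields $S_\beta(v)=O(\beta)$, so the boundary valuation no longer destabilizes, and a case-by-case check across the Cheltsov--Rubinstein classification confirms $A/S_\beta$ is bounded away from $0$. To promote this into full K-stability I would split into two sub-cases according to the structure of $-(K_S+C)$. When $K_S+C\equiv 0$ numerically (as in (I.4A), (I.5.$m$)), $(S,C)$ is log Calabi--Yau with smooth boundary, and KE edge metrics are constructed by a continuity method in $\beta$ starting from the singular log Calabi--Yau KE metric guaranteed by Berman--Berndtsson and Yau. When $-(K_S+C)$ is semi-ample but non-trivial (as in (I.3A)), it defines a log Iitaka ruling $\pi\colon S\to\PP^1$ compatible with $C$, and KE edge metrics are built via an adiabatic Calabi-momentum ansatz along $\pi$, with the conical data along $C$ handled transversally in the Mazzeo--Rubinstein framework.

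The principal obstacle I anticipate is the fibration sub-case of sufficiency: uniform Monge--Amp\`ere estimates as $\beta\to 0$ must control both the edge behavior along $C$ and the collapsing of fibers of $\pi$. The saving feature is that each pair in this sub-case carries an explicit $\PP^1$-bundle (or blow-up thereof) structure compatible with $\pi$, so momentum coordinates should reduce the PDE to an ODE on the $\PP^1$ base, circumventing the need for a general collapsing theory coupled to edge metrics.
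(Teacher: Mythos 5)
This statement is a \emph{conjecture} (from Cheltsov--Rubinstein), not a theorem the paper proves, and the paper itself records that it is \emph{false}: as noted in the introduction, Fujita--Liu--S\"u\ss--Zhang--Zhuang produced counterexamples in the sub-family $\mathrm{(I.9B.m)}$ when $m=1,2$ and exactly one of the two special points $p_0,p_\infty$ is blown up, and Fujita showed these are the only counterexamples among $\mathrm{(I.9B.m)}$ with $1\le m\le 6$. So no proof of the conjecture as stated can succeed, and your proposal necessarily has a gap.

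The gap is concretely in your sufficiency direction. For the family $\mathrm{(I.9B.m)}$ (blow-up of $\PP^1\times\PP^1$ at $m$ points on a smooth $(2,1)$-curve, with $C$ the proper transform), one has $K_S+C=\pi^*(0,-1)$, hence $(K_S+C)^2=0$ and $-(K_S+C)$ is semi-ample of Iitaka dimension $1$. Thus this family sits squarely in your ``fibration sub-case,'' alongside $\mathrm{(I.3A)}$, but your proposal never discusses it, and the counterexamples live here. Your claim that ``a case-by-case check across the Cheltsov--Rubinstein classification confirms $A/S_\beta$ is bounded away from $0$'' is precisely what the $\delta$-invariant computation of \cite[Corollary 2.9]{FLSZZ} refutes for those two pairs: the destabilizing divisor is not the boundary $C$ but a vertical divisor depending on the position of the blown-up points, which your argument (centered on $\mathrm{ord}_C$) cannot detect. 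Moreover, for $m\ge 1$ the blow-up breaks the clean $\PP^1$-bundle structure that the Calabi momentum ansatz requires, so the ``adiabatic reduction to an ODE'' you invoke does not apply to $\mathrm{(I.9B.m)}$; the answer there turns out to depend on which points are blown up, which is invisible to a fibration-level reduction.

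Your necessity sketch is in the right spirit (and agrees with known results of Cheltsov--Rubinstein proved by other means): when $(K_S+C)^2>0$, one indeed has $\mathrm{vol}(L_\beta)\to(K_S+C)^2>0$, $\tau(\beta)$ bounded below, $S_\beta(\mathrm{ord}_C)$ converging to a positive limit, and $A_{(S,(1-\beta)C)}(\mathrm{ord}_C)=\beta\to 0$, whence $\delta<1$ for small $\beta$. But the conjecture's ``if'' direction fails, and the correct statement requires excluding the two $\mathrm{(I.9B.m)}$ degenerations. Note also that the paper's own contribution (Theorem \ref{mainthm}) concerns the family $\mathrm{(II.6A.n.m)}$ with a two-component boundary, which is outside the scope of Conjecture \ref{CR} (where $r=1$); the $\delta$-invariant machinery in Sections \ref{Upp}--\ref{Prf} is the tool you would need, applied divisor-by-divisor on each classified pair, not just on $C$.
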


The necessary part and all but 6 sub-families of the sufficient part
were verified. Namely, the pairs
$\mathrm{(I.6B.m),(I.6C.m),(I.7.n.m),(I.8B.m),(I.9C.m)}$ do not
admit K\"ahler--Einstein edge metrics with small angles
\cite[Theorem 1.6]{CR2}, while the pairs
$\mathrm{(I.3A),(I.4A),(I.5.m)}$, and $\mathrm{(I.9B.m)}$ for $m=0$
or $m\geq7$ admit K\"ahler--Einstein edge metrics with small angles
\cite[Propositions 7.4 and 7.5]{CR}, \cite[Theorem 1.3]{CRZ},
\cite[Corollary 1]{JMR}.

Let us now discuss the remaining cases, namely, the sub-families
$\mathrm{(I.9B.m)}$ with $1\leq m\leq6$ and $\mathrm{(II.6A.n.m)}$.

For the sub-families $\mathrm{(I.9B.m)}$ with $1\leq m\leq6$, each
sub-family is parameterized by the position of the $m$ blown-up
points on the smooth $(2,1)$-curve. Let $p_0$ and $p_\infty$ denote
the points where the $(2,1)$-curve on $\PP^1\times\PP^1$ is tangent
to a $(0,1)$-curve. Fujita--Liu--S\"u\ss--Zhang--Zhuang showed that
the pairs $\mathrm{(I.9B.m)}$ contradict the conjecture if $m=1,2$
and exactly 1 of the points $p_0$ and $p_\infty$ is blown-up, by
computing an upper bound for the $\delta$-invariant \cite[Corollary
2.9]{FLSZZ}. Fujita verified that these 2 pairs are the only
counterexamples in the 6 sub-families \cite[Theorem 1.3]{Fuj}. Thus,
Conjecture \ref{CR} of Cheltsov--Rubinstein holds for the vast
majority of the cases, and the remaining cases are semi-stable
degenerations that are now completely understood by the
aforementioned works.

In this article we restrict to the remaining tenth and final family,
$\mathrm{(II.6A.n.m)}$, where $\Supp C$ has multiple components, and
hence is not within the scope of Conjecture \ref{CR}.
\begin{pro}\label{pro}
Determine whether $\mathrm{(II.6A.n.m)}$ admit K\"ahler--Einstein
edge metrics for sufficiently small $\beta$.
\end{pro}

First we introduce some notation to describe the curves appearing in
the study of $\mathrm{(II.6A.n.m)}$.
\begin{dfn}\label{setupdef}
Let $\F_n$ denote the Hirzebruch surface containing a curve $C_1$
whose self intersection number is $-n$, and pick a smooth curve
$C_2$ in $|C_1+nF|$ where $F$ denotes a fiber. Let $S$ be the
blow-up of $\F_n$ at $m$ points on $C_1\cup C_2$ such that no two of
them lie on the same fiber, and $C=\wt{C}_1+\wt{C}_2$, where
$\wt{C}_i\subseteq S$ is the proper transform of $C_i$. This family
is named $\mathrm{(II.6A.n.m)}$. See Figure \ref{pic}.
\end{dfn}

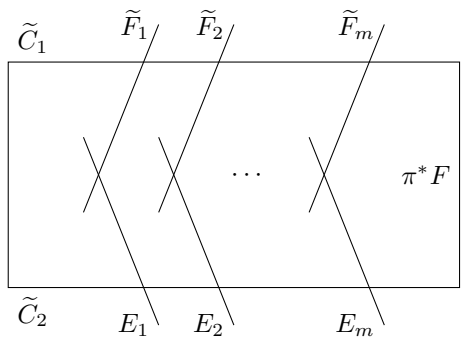
\begin{figure}[!ht]
    \centering
    \begin{tikzpicture}
        \draw(0,0)node[below right]{$\wt{C}_2$}--(6,0)--(6,3)node[midway,left]{$\pi^*F$}--(0,3)node[above right]{$\wt{C}_1$}--cycle;
        \draw(1,1)--(2,3.5)node[left]{$\wt{F}_1$};
        \draw(1,2)--(2,-.5)node[left]{$E_1$};
        \draw(2,1)--(3,3.5)node[left]{$\wt{F}_2$};
        \draw(2,2)--(3,-.5)node[left]{$E_2$};
        \draw(3.2,1.5)node{$\cdots$};
        \draw(4,1)--(5,3.5)node[left]{$\wt{F}_m$};
        \draw(4,2)--(5,-.5)node[left]{$E_m$};
    \end{tikzpicture}
    \caption{The pair $\mathrm{(II.6A.n.m)}$. The surface is given by blowing up $m$ points $p_1,\ldots,p_m$ on the curve $C_2\subseteq\F_n$, where $\F_n$ is the Hirzebruch surface containing a curve $C_1$ whose self intersection number is $-n$ and a generic fiber $F$, and $C_2$ is a smooth element of $|C_1+nF|$. The fiber through the point $p_i$ is denoted by $F_i$, and the exceptional curve over $p_i$ is denoted $E_i$. The curves $\wt{C}_1,\wt{C}_2,\wt{F}_1,\ldots,\wt{F}_m$ are the proper transforms of $C_1,C_2,F_1,\ldots,F_m$, respectively.}\label{pic}
\end{figure}

This family is the generalization of the football to complex
surfaces. In particular, the pair $\mathrm{(II.6A.0.0)}$ can
precisely be equipped with the standard conical metric on the
football times $\PP^1$, if $\beta_1=\beta_2$. Generalizing the
product football metric, using the $\PP^1$-bundle structure of
Hirzebruch surfaces, Rubinstein--Zhang constructed a family of
K\"ahler--Einstein edge metrics using the Calabi ansatz
\cite[Theorem 1.2]{RZ}, and settled the case $m=0$ for all $n$ by
finding an explicit relation between $\beta_1$ and $\beta_2$.
\begin{thm}\label{RZthm}
\begin{rm}\cite[Theorem 1.2]{RZ}.\end{rm} Consider $\mathrm{(II.6A.n.0)}$. Namely, $S$ is the Hirzebruch surface $\F_n$ containing a curve $C_1$ whose self intersection number is $-n$, and $C_2$ is a smooth curve in $|C_1+nF|$ where $F$ denotes a fiber. Let $\beta_1$ and $\beta_2$ be the cone angles along the boundary curves $C_1$ and $C_2$. Then K\"ahler--Einstein edge metrics exist if and only if
\begin{equation}\label{rela}
\beta_1^2-\frac{n}{3}\beta_1^3=\beta_2^2+\frac{n}{3}\beta_2^3.
\end{equation}
\end{thm}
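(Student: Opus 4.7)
The plan is to exploit the $\PP^1$-bundle structure $\pi:\F_n\to\PP^1$, whose two disjoint sections are precisely $C_1$ (the $(-n)$-section) and $C_2\in|C_1+nF|$ (a $(+n)$-section), and to run the Calabi/momentum ansatz in the edge setting. First I would reduce to a one-dimensional problem: any K\"ahler--Einstein edge metric on this log Fano pair should be invariant under the identity component of $\Aut(\F_n,C_1+C_2)$, which contains the full torus $(S^1)^2$ (fiberwise $U(1)$ together with an $S^1\subset PGL_2$ acting on the base). Such invariance follows from a conical analogue of Bando--Mabuchi: KEE metrics on a log Fano pair are unique modulo the connected component of the boundary-preserving automorphism group. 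Consequently the metric is described by a convex momentum profile $\varphi$ on an interval $[a,b]$, with $t=a$ and $t=b$ corresponding to $C_1$ and $C_2$ respectively.

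Next I would convert the KEE equation into an ODE for $\varphi$. In the momentum coordinate $t$, the K\"ahler form takes the shape $\omega=p(t)\,\pi^*\omega_{\PP^1}+dt\wedge d\theta$ up to normalization, where the slope of the linear factor $p(t)$ encodes $n=-C_1^2$. A standard computation of the Ricci form in this ansatz converts
\[
\operatorname{Ric}(\omega)=\omega+(1-\beta_1)[C_1]+(1-\beta_2)[C_2]
\]
into a second-order ODE for $\varphi(t)$, with boundary data $\varphi(a)=\varphi(b)=0$, $\varphi'(a)=\beta_1$, $\varphi'(b)=-\beta_2$ encoding the prescribed cone angles. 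Integrating the ODE once gives a first identity; a second integration absorbs the remaining freedom in normalizing $[a,b]$ and the K\"ahler class. The remaining relation is a polynomial in $\beta_1,\beta_2$ in which the cubic terms $\pm\tfrac{n}{3}\beta_i^3$ arise from integrating the linear base factor $p(t)$ against the quadratic endpoint contributions, with the opposite signs reflecting the opposite self-intersections of the two sections. Matching coefficients reproduces (\ref{rela}); conversely, the same analysis shows that (\ref{rela}) is sufficient for the ansatz ODE to admit a solution, which yields the metric by reversing the construction.

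The main obstacle, in my view, is not the ODE but the reduction to the ansatz: extending Bando--Mabuchi-type uniqueness of KEE metrics to a positive-curvature log Fano pair with a reducible, two-component boundary requires controlling weak geodesics in the edge setting and establishing convexity of the appropriately twisted Mabuchi energy with two independent angle weights $\beta_1,\beta_2$. A secondary but routine technical point is verifying that the ansatz solution has the edge H\"older regularity of \cite{JMR} at both sections, which is automatic once the profile has the correct linear slopes $\beta_1,-\beta_2$ at the endpoints. The ampleness hypothesis $-K_S-(1-\beta_1)C_1-(1-\beta_2)C_2$ is ample is satisfied for small $\beta$ by the strong asymptotic log Fano property, so the only nontrivial constraint is precisely the polynomial identity (\ref{rela}).
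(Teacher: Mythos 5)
Note first that the paper does not actually prove Theorem~\ref{RZthm}: it is imported from \cite[Theorem 1.2]{RZ}, and the introduction explicitly attributes the proof there to the Calabi momentum construction on the $\PP^1$-bundle. Your sketch therefore follows the route of the \emph{cited} source, not anything in the present article. It is still worth comparing against the article's own machinery, since that machinery would also yield the $m=0$ case: when $m=0$ the pair $(\F_n,C_1+C_2)$ is toric, and the computations of Sections~\ref{Nef}--\ref{Upp} and~\ref{Prf} (Zariski decomposition of $-K_{S,\Delta_\beta}-x\wt C_i$, the T-variety formula of Theorem~\ref{Zhathm}, and the toric K-polystability criterion of Theorem~\ref{torvarthm}) reduce the problem to checking $A_{S,\Delta_\beta}(\wt C_i)=S_{-K_{S,\Delta_\beta}}(\wt C_i)$, which is precisely~\eqref{rela}. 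The two routes buy different things: the momentum ansatz produces the metric explicitly and gives a very clean PDE picture, but it lives entirely on the $\PP^1$-bundle structure and hence is unavailable once $m\geq1$; the $\delta$-invariant/T-variety route is uniform in $m$ (and is why the article can settle the remaining cases), at the cost of giving no metric and relying on the Yau--Tian--Donaldson resolution \cite{LXZ}.

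Two remarks on the sketch itself. (i) For the ``only if'' direction you invoke a conical Bando--Mabuchi uniqueness theorem with a two-component boundary so as to reduce to torus-invariant metrics. That is heavier than needed: the standard argument (and what one finds in \cite{RZ}) is that existence of a KEE metric forces the log Futaki invariant to vanish, and in the momentum picture the log Futaki invariant is exactly the integral whose vanishing is~\eqref{rela}; the present paper encodes the same thing as the barycenter condition $\bc(\Psi)=0$, i.e.\ \eqref{FutVan}. (ii) You assert a full $(S^1)^2$ of symmetries, i.e.\ that an $S^1\subset PGL_2$ on the base preserves $C_2$. For a \emph{generic} smooth $C_2\in|C_1+nF|$ only the fiberwise $\C^*$ preserves the pair; the second factor appears only after first using that $\Aut(\F_n)$ acts transitively on smooth sections disjoint from $C_1$, so that one may normalize $C_2$ to the toric positive section. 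This normalization is implicit in both \cite{RZ} and in the paper's choice of f-divisor (Figure~\ref{fdiv}), but you should state it to justify your reduction.
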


It is difficult to generalize such a construction to the cases
$m\geq1$, as these pairs no longer have a $\PP^1$-bundle structure.
In this article, we study Problem \ref{pro} for $m\geq1$ using
algebraic methods, i.e., verifying log K-stability.

The notion of K-stability introduced by Tian \cite{Tia2} and refined
by Donaldson \cite{Don}, Berman \cite{Ber}, Fujita--Odaka \cite{FO}
and Fujita \cite{Fuj} turned out to characterize the existence of
K\"ahler--Einstein metrics. It was also extended to log setting
\cite{Don2}, namely K\"ahler--Einstein edge metrics exist if and
only if the pair is log K-polystable \cite{BBJ}. Moreover,
Fujita--Odaka \cite{FO} introduced the $\delta$-invariant
(Definition \ref{ddfn}) that characterizes K-stability. In fact, it
has been shown by Blum--Jonsson \cite{BJ} that $\delta>1$ is
equivalent to (uniform) K-stability, which is stronger than
K-polystability, and $\delta\geq1$ is equivalent to K-semistability,
which is weaker than K-polystability. This was also extended to log
setting \cite{Blu}.

The $\delta$-invariant is the main technical tool used in this
article. Our main result generalizes \eqref{rela}, imposing a
relation on the cone angles.

\begin{thm}\label{mainthm}
Consider $\mathrm{(II.6A.n.m)}$ as in Definition \ref{setupdef}. We
have
$$
\delta\left(S,\left(1-\beta_1\right)\wt{C}_1+\left(1-\beta_2\right)\wt{C}_2\right)\leq1,
$$
with equality if and only if
\begin{equation}\label{equalcon}\begin{aligned}
\beta_1^2+\frac{1}{3}\wt{C}_1^2\beta_1^3&=\beta_2^2+\frac{1}{3}\wt{C}_2^2\beta_2^3,&m&\neq1.
\end{aligned}\end{equation}
The pair is log K-polystable, i.e., it admits a K\"ahler--Einstein
edge metric, if and only if \eqref{equalcon} holds.
\end{thm}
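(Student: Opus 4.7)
My plan is to compute the log $\delta$-invariant $\delta_\beta := \delta(S,(1-\beta_1)\wt{C}_1+(1-\beta_2)\wt{C}_2)$ via Fujita--Odaka's valuative formula and then invoke the Berman--Boucksom--Jonsson equivalence between log K-polystability and the existence of a K\"ahler--Einstein edge metric. Writing $L := -K_S - (1-\beta_1)\wt{C}_1 - (1-\beta_2)\wt{C}_2$ (ample for small $\beta$), I will (i) prove $\delta_\beta \leq 1$ by testing the boundary divisors $\wt{C}_1,\wt{C}_2$ and extract \eqref{equalcon} as the equality condition, then (ii) prove $\delta_\beta \geq 1$ under \eqref{equalcon} by the Abban--Zhuang adjunction method combined with the residual $\C^*$-symmetry of $S$, and finally (iii) upgrade K-semistability to K-polystability using the same $\C^*$-action.

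For step (i), the log discrepancies are $A_{S,D}(\wt{C}_i) = \beta_i$, and the expected vanishing orders $S_L(\wt{C}_i) = \vol(L)^{-1}\int_0^{\tau_i}\vol(L-t\wt{C}_i)\,dt$ can be computed by an explicit Zariski decomposition on the blow-up. For $t$ below the nef threshold $t_\ast$ (determined by when $L - t\wt{C}_i$ first intersects one of $\wt{C}_j$, $E_j$, or $\wt{F}_j$ negatively), the volume is polynomial in $t$; above $t_\ast$ one peels off the corresponding negative part, iterated by contracting the newly non-positive curves, and integrates again. The resulting $S_L(\wt{C}_i)$ is a polynomial in $\beta_1,\beta_2$ whose leading behavior is governed by the intersection number $\wt{C}_i^2$. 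The bound $\min_i(\beta_i/S_L(\wt{C}_i)) \leq 1$ combined with the analysis of simultaneous equality then collapses algebraically to \eqref{equalcon}, which specializes to \eqref{rela} when $m=0$ since $\wt{C}_1^2 = -n$ and $\wt{C}_2^2 = n$ in that case.

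For step (ii), the surface $S$ retains a $\C^*$-action inherited from the fiberwise $\C^*$ on $\F_n$ that fixes $C_1 \cup C_2$ pointwise, which survives the blow-ups because the centers lie on this fixed locus. By equivariant K-stability, it suffices to control $A/S \geq 1$ on $\C^*$-invariant divisorial valuations. For each $\C^*$-invariant prime divisor $F \in \{\wt{C}_i,\wt{F}_i,E_i\}$ I run Abban--Zhuang with a flag $(F,p)$ at each of the finitely many $\C^*$-fixed points $p \in F$; each such flag reduces the lower bound to comparing the global $S_L(F)$ with the refined invariant $S(W_{F,\bullet};v)$ along the curve $F$, yielding a polynomial inequality in $\beta_1,\beta_2$ that holds in the small-$\beta$ regime once \eqref{equalcon} is imposed. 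Step (iii) is then automatic: the minimizers $\wt{C}_1,\wt{C}_2$ are themselves $\C^*$-invariant and induce product test configurations from the $\C^*$-action, so no non-product special test configuration can destabilize the pair.

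The main obstacle is step (ii). The blow-ups destroy the $\PP^1$-bundle structure underpinning Rubinstein--Zhang's Calabi ansatz, so the Zariski decompositions are genuinely two-dimensional and require careful bookkeeping of the intersection graph of $\wt{C}_i$, $E_j$, and $\wt{F}_j$; in particular, one must rule out destabilizing valuations obtained by further blow-ups at intersection points such as $\wt{F}_i \cap \wt{C}_j$, where the Abban--Zhuang bound is tightest. The case $m=1$ is genuinely exceptional, as signaled by the exclusion in \eqref{equalcon}: there the fiberwise symmetry provided by multiple blown-up fibers is absent, the intersection combinatorics degenerate, and a separate direct computation will be needed to identify the (distinct) equality locus and complete the dichotomy.
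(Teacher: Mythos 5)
Your step (i) matches the paper's approach: the paper computes $S_L(\wt{C}_1)$ and $S_L(\wt{C}_2)$ by Zariski decomposition (Lemmas \ref{C1}--\ref{C2S}) and extracts the bound $\delta \leq 1$ and the candidate equality locus \eqref{equalcon} exactly as you describe in Theorem \ref{C12}. The paper also uses the $E_i$ computation to rule out $m=1$ (Theorem \ref{E}); you correctly flag this as a necessary extra step but then defer it to ``a separate direct computation,'' so that piece of the necessity argument is not actually carried out.

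Your steps (ii) and (iii) diverge from the paper and, as stated, contain a genuine gap. The paper organizes $S$ as a rational T-variety of complexity one (Section \ref{Tva}), computes the dual function $\Psi$ explicitly, and then invokes Theorem \ref{Tvarthm} (from Araujo et al.\ and Liu): for a log Fano pair carrying a torus of dimension $\dim X - 1$, K-polystability is \emph{equivalent} to (a) the Futaki character vanishing and (b) $A/S \geq 1$ for all $T$-invariant prime divisors, with equality precisely on the horizontal ones. This packages ``K-semistable $\Rightarrow$ K-polystable'' into a single criterion, checked directly on the finite list $\wt{C}_1, \wt{C}_2, E_i, \wt{F}_i$. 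Your proposal does not use this structure; instead you run Abban--Zhuang at flags $(F,p)$ and then claim in step (iii) that K-polystability is ``automatic'' because the minimizers $\wt{C}_1, \wt{C}_2$ are $\C^*$-invariant and give product test configurations. That inference is not valid: $\delta = 1$ only gives K-semistability, and the passage to K-polystability requires showing that \emph{every} valuation achieving $A/S = 1$ generates a product degeneration. In fact for $m = 2$ the divisors $E_i$ and $\wt{F}_i$ also achieve $A/S = 1$, so the minimizer is not confined to $\{\wt{C}_1, \wt{C}_2\}$; the paper handles $m=2$ separately via the toric criterion (Theorem \ref{torvarthm}), a case distinction absent from your plan. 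Finally, your step (ii) is an outline rather than a computation: the claim that the Abban--Zhuang refined inequalities hold ``in the small-$\beta$ regime once \eqref{equalcon} is imposed'' is exactly what needs to be verified, and you do not carry it out. The complexity-one machinery the paper uses buys a clean finite check and a precise characterization of which divisors can compute the infimum; without it, you would need to control all $\C^*$-invariant quasi-monomial valuations over the fixed points, which is substantially more work and where your proposal stops short.
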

The equivalence between stability and existence of
K\"ahler--Einstein edge metrics used in the statement is due to the
resolution of the logarithmic Yau--Tian--Donaldson conjecture
\cite[Theorem 1.6]{LXZ}.

Theorem \ref{mainthm} generalizes Rubinstein--Zhang's Theorem
\ref{RZthm} and completely solves Problem \ref{pro}.

\subsection*{Organization}

We first prove the necessary part of the main theorem by directly
using the definition of the $\delta$-invariant. Section \ref{Pre}
contains the preliminary. We explain the setup in Section \ref{The}.
Then in Sections \ref{Nbv} and \ref{del} we recall the definition of
the $\delta$-invariant and Zariski decomposition which serves as the
main tool in the computation of the $\delta$-invariant. The nef cone
and the negative definite intersection matrices are computed in
Sections \ref{Nef} and \ref{Int}, respectively, as preparation for
the computation of the volumes of divisors. Finally, in Section
\ref{Upp}, we use the computation above to give upper bounds for the
$\delta$-invariant.

Next we make use of the T-variety structure of the pair to give an
alternative proof of the necessary part, as well as a proof of the
sufficient part. In Section \ref{Tva} we recall the relevant
definitions and theorems. The proof is given in Section \ref{Prf}.

\section{Preliminaries}\label{Pre}

\subsection{The pairs (II.6A.n.m)}\label{The}

In this section, we start by explaining the definition of the family
$\mathrm{(II.6A.n.m)}$. As a preparation, we also compute the
intersection numbers of curves as well as the log anti-canonical
divisor of the surface $S$.

For $n\geq0$, let $\F_n$ denote the Hirzebruch surface containing a
curve $C_1$ whose self intersection number is $-n$. A fiber $F$ is a
smooth curve on $\F_n$ whose self intersection number is 0. Pick a
smooth element of $|C_1+nF|$ and call it $C_2$. Let $S$ be the
blow-up of $\F_n$ at $m$ points $p_1,\dots,p_m$ on $C_1\cup C_2$
such that no two of them lie on the same fiber, and
$\wt{C}_i\subseteq S$ the proper transform of $C_i$. Consider the
pair $(S,\wt{C}_1+\wt{C}_2)$. Let $\pi:S\to\F_n$ denote the blow-up
morphism, $E_i$ the exceptional divisor $\pi^{-1}(p_i)$, $F$ a
generic fiber, i.e., a fiber not through any $p_i$, and $F_i$ the
fiber through $p_i$. For an irreducible curve $C\subseteq\F_n$, let
$\wt{C}$ denote its proper transform. For $i=1,2$, define
$$\begin{aligned}
I_i&:=\left\{j\,\middle|\,p_j\in
C_i\right\},&m_i&:=\left|I_i\right|.
\end{aligned}$$

\begin{rmk}
Recall that $\F_{n+1}$ can be obtained by blowing up a point on the
$-n$-curve of $\F_n$ and then blow down the proper transform of the
fiber through that point. In other words, if $S$ is obtained from
$\F_n$ by blowing up $m_1$ points on $C_1\subseteq\F_n$ and $m_2$
points on $C_2\subseteq\F_n$ such that no two of them lie on the
same fiber, then it can also be obtained from $\F_{n+m_1}$ by
blowing up $m_1+m_2$ points on $C_2\subseteq\F_{n+m_1}$. Because of
this, we may assume
$$\begin{aligned}
m_1&=0,&m_2&=m.
\end{aligned}$$
\end{rmk}

The Picard group of $S$ is generated by the linear equivalent
classes of $\pi^*C_2$, $\pi^*F$ and $E_i$, giving intersection
matrix
$$
\begin{pmatrix}
n&1\\
1&0\\
&&-1\\
&&&\ddots\\
&&&&-1
\end{pmatrix}.
$$
Note that
$$\begin{aligned}
\wt{C}_1&\sim\pi^*\left(C_2-nF\right),\\
\wt{C}_2&\sim\pi^*C_2-\sum_iE_i,\\
-K_S&=\wt{C}_1+\wt{C}_2+2\pi^*F\sim\pi^*\left(2C_2+\left(2-n\right)F\right)-\sum_iE_i.
\end{aligned}$$

Let
$$
\Delta_\beta:=\left(1-\beta_1\right)\wt{C}_1+\left(1-\beta_2\right)\wt{C}_2\sim_\Q\pi^*\left(\left(2-\beta_1-\beta_2\right)C_2-\left(n-n\beta_1\right)F\right)-\left(1-\beta_2\right)\sum_iE_i.
$$
Then
\begin{equation}\label{antican}
-K_{S,\Delta_\beta}:=-K_S-\Delta_\beta=\pi^*\left(\left(\beta_1+\beta_2\right)C_2+\left(2-n\beta_1\right)F\right)-\beta_2\sum_iE_i.
\end{equation}

\subsection{Nefness, bigness, volume and Zariski decomposition}\label{Nbv}

In this section, we recall the definition of nefness, bigness, and
volume \cite[Deinition 1.4.1, Definition 2.2.1, Definition 2.2.31
and Remark 2.2.39]{Laz}. We also recall Zariski decomposition, which
we will use as the main tool to compute the volume of a divisor.

\begin{dfn}\label{defalg}
Let $X$ be a projective manifold of dimension $n$ and $D$ a divisor
on $X$. $D$ is called \textit{numerically effective}, or
\textit{nef}, if
$$
D\cdot C\geq0
$$
for any irreducible curve $C$. Define its \textit{volume}
$$
\vol\left(D\right):=\lim_{k\to\infty}\frac{n!}{k^n}\dim
H^0\left(X,kD\right).
$$
In general, if $D$ is a $\Q$-divisor, assume its multiple $mD$ is an
integral divisor. Then its volume is defined by scaling
$$
\vol\left(D\right):=\frac{1}{m^n}\vol\left(mD\right).
$$
We say $D$ is \textit{big} if $\vol(D)>0$.
\end{dfn}

To compute the volume, the following standard result is useful
\cite[Corollary 1.4.41]{Laz}.

\begin{ptn}\label{nefvol}
Let $X$ be a projective manifold of dimension $n$ and $D$ a nef
divisor on $X$. Then
$$
\vol\left(D\right)=D^n.
$$
\end{ptn}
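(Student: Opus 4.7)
The plan is to reduce to the case where $D$ is ample, for which the statement is a direct consequence of asymptotic Riemann--Roch combined with Kodaira vanishing. Concretely, if $D$ is ample, then $h^i(X,kD)=0$ for all $i\geq 1$ and all $k$ sufficiently large, so
$$
h^0(X,kD)=\chi(X,kD)=\frac{D^n}{n!}k^n+O(k^{n-1}),
$$
and Definition \ref{defalg} immediately yields $\vol(D)=D^n$.

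To extend the identity from the ample case to the nef case, I would fix an auxiliary ample divisor $A$ and perturb. By Kleiman's criterion, the ample cone is the interior of the nef cone, so $D+\epsilon A$ is ample for every $\epsilon>0$. The ample case then gives $\vol(D+\epsilon A)=(D+\epsilon A)^n$, and multilinearity of the intersection product forces $(D+\epsilon A)^n\to D^n$ as $\epsilon\to 0^+$. The identity $\vol(D)=D^n$ would follow at once provided we know $\vol(D+\epsilon A)\to \vol(D)$ along this path.

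The main obstacle is exactly this last continuity step. Kodaira vanishing is not available for a divisor that is nef but not ample, so asymptotic Riemann--Roch cannot be applied directly to $kD$ to read off $h^0(X,kD)$. The cleanest remedy is a Fujita-type cohomology bound asserting $h^i(X,kD)=O(k^{n-1})$ for all $i\geq 1$ whenever $D$ is nef; combined with asymptotic Riemann--Roch, this upgrades the estimate on $\chi(X,kD)$ to an estimate on $h^0(X,kD)$ and finishes the proof without invoking the perturbation. Either route ultimately hinges on the same technical input: continuity of $\vol$ at the boundary of the big cone, equivalently uniform control of higher cohomology of $kD$ for nef $\Q$-divisors. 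Since nef divisors are limits of ample ones and Kodaira vanishing only just fails on the boundary, I expect the necessary bound to be delicate but tractable, and I would be content to cite \cite{Laz} once the reduction to the ample case is clearly laid out.
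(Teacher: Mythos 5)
The paper does not prove this proposition; it cites it directly as a standard result, namely \cite[Corollary 1.4.41]{Laz}. Your sketch via the Fujita-type bound $h^i(X,kD)=O(k^{n-1})$ for nef $D$, fed into asymptotic Riemann--Roch, is exactly the argument given in that reference (there, the cohomology bound is \cite[Theorem 1.4.40]{Laz} and the corollary you want follows immediately), so you have identified the correct route and the correct key lemma. One small clarification: the two routes you describe are not literally ``the same technical input.'' The perturbation route needs continuity of $\vol$ up to the boundary of the big cone, which in the surface/projective-manifold setting is usually \emph{derived from} the cohomology bound rather than being equivalent to it; the cleaner and self-contained path is the one you favor, applying the bound directly to $kD$ and bypassing perturbation entirely.
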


When $X$ is a surface, we can use Zariski decomposition to compute
the volume \cite[Theorems 7.7 and 8.1]{Zar}.
\begin{dfn}\label{ZDecomp}
Let $D$ be an effective divisor on a smooth projective surface $S$.
There is a unique effective divisor $\displaystyle N=\sum_ia_iN_i$
satisfying the following:
\begin{enumerate}[1)]
    \item $D-N$ is nef;
    \item The intersection matrix $(N_i\cdot N_j)$ is negative definite;
    \item For any $i$,
    \begin{equation}\label{PNperp}
        \left(D-N\right)\cdot N_i=0.
    \end{equation}
\end{enumerate}
The divisor $N$ is called the \textit{negative part} of $D$, and the
divisor
$$
P:=D-N
$$
is called the \textit{positive part} of $D$.
\end{dfn}
\begin{ptn}\label{Zvol}
Let $D$ be an effective divisor on a smooth projective surface $S$
with Zariski decomposition $D=P+N$. Then
$$
\vol\left(D\right)=\vol\left(P\right)=P^n.
$$
\end{ptn}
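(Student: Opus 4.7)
The plan is to reduce the statement to showing $\vol(D) = \vol(P)$, after which Proposition \ref{nefvol} immediately yields $\vol(P) = P^n$ since $P = D - N$ is nef by condition 1) of Definition \ref{ZDecomp}. The inequality $\vol(P) \leq \vol(D)$ is the easy direction: for any $k$ sufficiently divisible so that $kN$ is an integral divisor, multiplication by the canonical section of $\OO_S(kN)$ (which exists since $N$ is effective) gives an injection $H^0(S, kP) \hookrightarrow H^0(S, kD)$; passing to the limit in Definition \ref{defalg} preserves the inequality.

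For the reverse inequality, the plan is to upgrade this injection to an isomorphism by showing that every effective $\Sigma \in |kD|$ already satisfies $\Sigma \geq kN$. Writing $\Sigma = \sum_i b_i N_i + \Sigma'$ with $\Sigma'$ sharing no irreducible component with $N$, effectivity of $\Sigma'$ forces $\Sigma' \cdot N_j \geq 0$ for every $j$. Intersecting $\Sigma \sim kD$ with $N_j$ and using the orthogonality $P \cdot N_j = 0$ from \eqref{PNperp} to rewrite $kD \cdot N_j = k\sum_i a_i (N_i \cdot N_j)$ then yields
$$
\sum_i \left(b_i - ka_i\right)\left(N_i \cdot N_j\right) \leq 0 \quad \text{for every } j.
$$

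The main obstacle, and the essential use of the Zariski decomposition, is to deduce $b_i \geq ka_i$ from this linear system. This is exactly where condition 2) of Definition \ref{ZDecomp} is critical: since the $N_i$ are distinct irreducible curves, $N_i \cdot N_j \geq 0$ for $i \neq j$, while negative definiteness forces $N_i^2 < 0$, so $-(N_i \cdot N_j)$ is a symmetric M-matrix. The classical monotonicity property of M-matrices then gives $b_i - ka_i \geq 0$. Once $\Sigma \geq kN$ is established, $\Sigma - kN$ is effective and linearly equivalent to $kP$, producing the inverse map $H^0(S, kD) \to H^0(S, kP)$. This establishes $\vol(D) = \vol(P)$ and completes the proof.
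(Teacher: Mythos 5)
The paper does not supply a proof of this proposition; it attributes the statement to Zariski, citing \cite[Theorems 7.7 and 8.1]{Zar} just before Definition \ref{ZDecomp}, so there is no internal proof to compare against. Your argument is correct and essentially reconstructs the classical one. The reduction is right: given Proposition \ref{nefvol} applied to the nef divisor $P$, the whole content is that the injection $H^0(S,kP)\hookrightarrow H^0(S,kD)$ (for $k$ sufficiently divisible, via multiplication by the section cutting out $kN$) is in fact surjective, which amounts to showing every $\Sigma\in|kD|$ satisfies $\Sigma\geq kN$. Your derivation of $\sum_i(b_i-ka_i)(N_i\cdot N_j)\leq 0$ for all $j$ from the orthogonality \eqref{PNperp} and the effectivity of $\Sigma'$ is correct, and the invocation of the sign-preservation property of symmetric M-matrices (Stieltjes matrices) is a legitimate way to close the linear-algebra step. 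If one wants to avoid citing M-matrix theory, one can argue directly: set $x_i=b_i-ka_i$ and write $x=x^+-x^-$ with disjoint supports; then $(x^-)^T M x\leq 0$ because each $x^-_j\geq 0$ pairs with $(Mx)_j\leq 0$, and $(x^-)^T M x^+\geq 0$ because only off-diagonal entries $N_i\cdot N_j\geq 0$ contribute, whence $(x^-)^T M x^-=(x^-)^T M x^+-(x^-)^T M x\geq 0$, forcing $x^-=0$ by negative definiteness. Either way the proof is complete; the only implicit hypothesis worth flagging is that the $N_i$ are distinct prime divisors, which you correctly use to guarantee $N_i\cdot N_j\geq 0$ for $i\neq j$.
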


\subsection{\texorpdfstring{$\delta$}{delta}-invariant}\label{del}

In this section we recall the definition of the $\delta$-invariant.

Let $X$ be a projective manifold of dimension $n$, and
$\displaystyle\Delta=\sum_ia_iD_i$ a simple normal crossing
$\Q$-divisor on $X$ with coefficient $a_i\in(0,1)$. Assume
$$
K_{X,\Delta}:=K_X+\Delta
$$
is an anti-ample $\Q$-divisor.

Recall the following definitions \cite[Theorem 3.3]{BBJ}
\cite[Definition 2.24]{KM} .

\begin{dfn}
Let $X$ be a projective manifold. We call $E$ \textit{a prime
divisor over $X$} if there is a projective birational morphism
$f:Y\to X$ with normal variety $Y$, and $E$ is a prime divisor on
$Y$. For a big line bundle $L\to X$, define the
\textit{pseudo-effective threshold}
$$
\tau_L\left(E\right):=\sup\left\{x\in\Q_{>0}\,\middle|\,f^*\left(L\right)-xE\text{
is big}\right\},
$$
and the \textit{expected vanishing order}
$$
S_L\left(E\right):=\frac{1}{\vol\left(L\right)}\int_0^{\tau_L\left(E\right)}\vol\left(f^*\left(L\right)-xE\right)dx.
$$
Define its \textit{log discrepancy}
\begin{equation}\label{Adef}
A_{X,\Delta}\left(E\right):=1-\ord_E\left(f^*\left(K_{X,\Delta}\right)-K_Y\right).
\end{equation}
\end{dfn}

Below is the definition of the $\delta$-invariant \cite[Theorem
C]{BBJ} that we will use in this article.
\begin{dfn}\label{ddfn}
Let $X$ be a projective manifold. For a big line bundle $L\to X$,
define the \textit{$\delta$-invariant}
$$
\delta\left(X,\Delta;L\right):=\inf_{E}\frac{A_{X,\Delta}\left(E\right)}{S_L\left(E\right)},
$$
where $E$ is taken to be prime divisors over $X$. In particular, if
$L=-K_{X,\Delta}$, set
$$
\delta\left(X,\Delta\right):=\delta\left(X,\Delta;-K_{X,\Delta}\right).
$$
\end{dfn}

\section{Nefness criterion}\label{Nef}

For the remainder of the article we let $(S,C)$ be the pair
$\mathrm{(II.6A.n.m)}$. See Section \ref{The}.

To compute Zariski decomposition, we need to check nefness of
various divisors. To simplify this process, in this section we first
compute the nef cone. The result is the following.

\begin{lem}
A divisor
$$
D\sim_\Q\pi^*\left(aC_2+bF\right)-\sum_ic_iE_i
$$
is nef if and only if
\begin{equation}\label{nefconc}\begin{aligned}
0&\leq D\cdot E_i=c_i,\\
0&\leq D\cdot\wt{F}_i=a-c_i,\\
0&\leq D\cdot\wt{C}_1=b,\\
0&\leq D\cdot\wt{C}_2=na+b-\sum_ic_i.
\end{aligned}\end{equation}
\end{lem}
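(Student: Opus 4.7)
The plan is to prove both directions by direct intersection-theoretic arguments on $S$, using the Picard-group description from Section \ref{The}.

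The "only if" direction is immediate from the definition of nefness. I would simply compute the four pairings using the class relations $\wt{C}_1\sim\pi^*(C_2-nF)$, $\wt{C}_2\sim\pi^*C_2-\sum_iE_i$, $\wt{F}_i\sim\pi^*F-E_i$, together with the intersection table on $S$ (i.e. $\pi^*C_2\cdot\pi^*C_2=n$, $\pi^*C_2\cdot\pi^*F=1$, $\pi^*F\cdot\pi^*F=0$, $E_i\cdot E_j=-\delta_{ij}$, and all mixed pairings $\pi^*(\,\cdot\,)\cdot E_i$ vanish). This produces exactly the four numerical equalities stated in \eqref{nefconc}.

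For the "if" direction, I need to show that the four hypotheses imply $D\cdot C\geq0$ for every irreducible curve $C\subseteq S$. If $C$ is one of $E_i,\wt{F}_i,\wt{C}_1,\wt{C}_2$ there is nothing to prove. Otherwise $C$ is the proper transform of some irreducible curve $C'\subseteq\F_n$ distinct from $C_1$, $C_2$, and every $F_i$. The classification of irreducible curves on a Hirzebruch surface gives $C'\sim\alpha C_1+\beta F$ with $\alpha\geq0$, and the inequality $C'\cdot C_1\geq0$ (available because $C'\neq C_1$) forces $\beta\geq n\alpha\geq0$. Setting $m_i:=\mathrm{mult}_{p_i}(C')\geq0$, we have $C\sim\pi^*C'-\sum_im_iE_i$, whence a direct intersection computation gives
$$
D\cdot C=a\beta+b\alpha-\sum_ic_im_i.
$$

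The key geometric input is that, since $C$ and $\wt{C}_2$ are distinct irreducible curves, $C\cdot\wt{C}_2\geq0$, which unwinds to $\beta\geq\sum_im_i$. Combined with the assumptions $0\leq c_i\leq a$ (from $D\cdot E_i\geq0$ and $D\cdot\wt{F}_i\geq0$) and $b\geq0$ (from $D\cdot\wt{C}_1\geq0$), this gives $\sum_ic_im_i\leq a\sum_im_i\leq a\beta$, hence $D\cdot C\geq b\alpha\geq0$. I expect the main obstacle to be a careful bookkeeping of the edge cases in the classification of irreducibles on $\F_n$ (in particular the $n=0$ case, where one must note that a curve $C'$ with $\beta=0$ and $C'\neq C_2$ cannot meet $C_2$ at any $p_i$, so all $m_i$ vanish). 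Notably, the hypothesis $D\cdot\wt{C}_2\geq0$ is used only to handle $C=\wt{C}_2$ itself; it is not needed in the generic case above.
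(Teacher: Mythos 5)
Your proof is correct, and it takes a genuinely different route from the paper's. The paper establishes the ``if'' direction by the standard effective-decomposition trick: it exhibits the explicit effective representative
$$
D' := a\wt{C}_2+\sum_i\left(a-c_i\right)E_i+b\pi^*F\sim_\Q D,
$$
whose coefficients are nonnegative by the hypotheses $a-c_i\geq0$, $b\geq0$ (and $a\geq c_i\geq0$), and then observes that to verify nefness it suffices to check $D\cdot C\geq0$ only for the irreducible components $\wt{C}_2,E_i,\pi^*F$ of $D'$ --- each of which is immediate from \eqref{nefconc}. You instead enumerate all irreducible curves on $S$ by pushing forward to $\F_n$, invoke the Mori-cone classification of irreducible curves on a Hirzebruch surface to write $C'\sim\alpha C_1+\beta F$ with $\alpha\geq0,\ \beta\geq n\alpha$, and control the blow-up multiplicities $m_i=\mathrm{mult}_{p_i}(C')$ via the inequality $C\cdot\wt{C}_2\geq0$, i.e.\ $\beta\geq\sum_im_i$. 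Both arguments are sound. The paper's is shorter and requires no knowledge of the effective cone of $\F_n$, since choosing a single good effective representative sidesteps the enumeration entirely; yours is slightly longer but makes the geometry and the role of each of the four inequalities more transparent. One small point common to both: the bound $a\geq0$, needed in the paper to check $D\cdot\pi^*F\geq0$ and to get effectivity of $D'$, and in your argument for $a\sum_im_i\leq a\beta$, comes only from $c_i\geq0$ together with $a-c_i\geq0$, and so tacitly assumes $m\geq1$ (the standing assumption in the paper).
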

\begin{proof}
Recall Definition \ref{defalg}. If $D$ is nef, then \eqref{nefconc}
is necessary.

On the other hand, if \eqref{nefconc} is satisfied, then the divisor
$$
D':=a\wt{C}_2+\sum_i\left(a-c_i\right)E_i+b\pi^*F\sim_\Q D
$$
is effective. To prove that $D$ is nef, it suffices to show that
$D\cdot C\geq0$ for each irreducible component $C$ of $D'$. Since
\eqref{nefconc} is satisfied,
$$\begin{aligned}
D\cdot\wt{C}_2&=na+b-\sum_ic_i\geq0,\\
D\cdot E_i&=c_i\geq0,\\
D\cdot\pi^*F&=a\geq0.
\end{aligned}$$
This shows $D$ is nef.
\end{proof}

\begin{cor}
The log anti-canonical divisor $-K_{S,\Delta_\beta}$ is ample if and
only if
\begin{equation}\label{ampconc}\begin{aligned}
\beta_1&\in\left(0,\frac{2}{n}\right),&\beta_2&\in\begin{cases}
\left(0,+\infty\right),&n\geq m\\
\left(0,\frac{2}{m-n}\right),&n<m.
\end{cases}
\end{aligned}\end{equation}
\end{cor}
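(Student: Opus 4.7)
The plan is to apply the preceding lemma directly to $D=-K_{S,\Delta_\beta}$ and then upgrade the resulting nefness conditions to ampleness via Kleiman's criterion.

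From \eqref{antican} I read off the coefficients $a=\beta_1+\beta_2$, $b=2-n\beta_1$, and $c_i=\beta_2$ for every $i$. Substituting into \eqref{nefconc}, the inequalities $c_i\geq 0$ and $a-c_i\geq 0$ reduce to $\beta_2\geq 0$ and $\beta_1\geq 0$, both of which are automatically strict from $\beta_1,\beta_2\in(0,1]$. The inequality from $\wt{C}_1$ becomes $2-n\beta_1\geq 0$, and the inequality from $\wt{C}_2$ simplifies, after cancellation, to $2+(n-m)\beta_2\geq 0$. Promoting these two to strict inequalities yields $\beta_1<2/n$ (vacuously $\beta_1<\infty$ when $n=0$), while the other is automatic when $n\geq m$ and equivalent to $\beta_2<2/(m-n)$ when $n<m$. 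This is precisely \eqref{ampconc}.

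To convert ``strict nefness'' into ampleness I would invoke Kleiman's criterion, namely that the ample cone is the interior of the nef cone. Since every numerical class on $S$ is of the form $\pi^*(aC_2+bF)-\sum_i c_iE_i$ by the Picard group description in Section \ref{The}, the preceding lemma identifies the nef cone as the closed rational polyhedral cone defined by the four linear inequalities of \eqref{nefconc}. Its interior is cut out by the corresponding strict inequalities, which I have just matched to \eqref{ampconc} when $D=-K_{S,\Delta_\beta}$.

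I do not anticipate any substantive obstacle: the corollary is essentially immediate from the nefness lemma together with Kleiman's criterion. The only points requiring care are the bookkeeping in the case split on the sign of $n-m$ for the $\wt{C}_2$-inequality, and the convention $2/0=+\infty$ used to handle $n=0$ uniformly.
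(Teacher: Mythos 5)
Your proposal is correct and takes essentially the same route as the paper: substitute the coefficients from \eqref{antican} into the nefness lemma \eqref{nefconc}, then upgrade nefness to ampleness by citing that the ample cone is the interior of the nef cone (the paper cites this from Lazarsfeld, which is Kleiman's criterion). The bookkeeping you spell out for the $\wt{C}_2$ inequality and the $n=0$ convention matches the paper's intended, if unstated, intermediate steps.
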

\begin{proof}
Since the ample cone is the interior of the nef cone \cite[Theorem
1.4.23]{Laz}, $-K_{S,\Delta_\beta}$ is ample if and only if the
inequalities of \eqref{nefconc} are strict. By \eqref{antican},
$$\begin{aligned}
\beta_2&>0,\\
\beta_1&>0,\\
2-n\beta_1&>0,\\
2+n\beta_2&>m\beta_2.
\end{aligned}$$
This is \eqref{ampconc}.
\end{proof}

\section{Intersection matrices}\label{Int}

Since the Zariski decomposition of a divisor also involves curves
with negative definite intersection matrices, in this section we
give some collections of curves that have negative definite
intersection matrices. They will be used in the computation of
Zariski decompositions.

\begin{clm}\label{clm}
The following families of divisors have negative definite
intersection matrices.
\begin{enumerate}[(1)]
    \item\label{ndF}$\wt{F}_1,\ldots,\wt{F}_m$;
    \item\label{ndE}$E_1,\ldots,E_m$;
    \item\label{ndC1Fi}$\wt{C}_1,\wt{F}_i$, where $n>1$ and $1\leq i\leq m$;
    \item\label{ndC1F}$\wt{C}_1,\wt{F}_1,\ldots,\wt{F}_m$, where $n>m$;
    \item\label{ndC2Fi}$\wt{C}_2,\wt{F}_i$, where $n<m$ and $1\leq i\leq m$;
    \item\label{ndC1C2Fi}$\wt{C}_1,\wt{C}_2,\wt{F}_i$, where $1<n<m$ and $1\leq i\leq m$;
    \item\label{ndC2FiEj}$\wt{C}_2,\wt{F}_i,E_1,\ldots,\widehat{E}_i,\ldots,E_m$, where $n=0$ and $1\leq i\leq m$.
\end{enumerate}
\end{clm}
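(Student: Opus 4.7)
The plan is to establish negative definiteness in all seven cases by Sylvester's criterion, using the Schur complement to evaluate the one non-trivial determinant that recurs. First I record the intersection numbers, which all follow from Section \ref{The} together with the fact that the blown-up points lie on $C_2$ rather than on $C_1$: $\wt{C}_1^2 = -n$, $\wt{C}_2^2 = n - m$, $\wt{F}_i^2 = E_i^2 = -1$, $\wt{C}_1 \cdot \wt{C}_2 = 0$, $\wt{C}_1 \cdot \wt{F}_i = 1$, $\wt{C}_2 \cdot \wt{F}_i = 0$ (the intersection at $p_i$ is absorbed by the blow-up), $\wt{C}_2 \cdot E_i = 1$, $\wt{C}_1 \cdot E_i = 0$, $\wt{F}_i \cdot E_j = \delta_{ij}$, and $\wt{F}_i \cdot \wt{F}_j = E_i \cdot E_j = 0$ for $i \neq j$. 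Cases (1) and (2) give the intersection matrix $-I_m$, so negative definiteness is immediate. Case (3) gives $\bigl(\begin{smallmatrix} -n & 1 \\ 1 & -1 \end{smallmatrix}\bigr)$, with leading minor $-n < 0$ and determinant $n - 1 > 0$ precisely when $n > 1$. Case (5) is in fact diagonal, equal to $\mathrm{diag}(n - m, -1)$, and is negative definite exactly when $n < m$.

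The main computation is Case (4), where the matrix has the bordered form
$$
M = \begin{pmatrix} -n & v^T \\ v & -I_m \end{pmatrix}, \qquad v = (1, \ldots, 1)^T.
$$
A Schur complement computation yields $\det M = (-1)^m(m - n)$, and more generally the $k$-th leading principal minor equals $(-1)^{k-1}\bigl((k-1) - n\bigr)$ for $1 \leq k \leq m+1$. Applying Sylvester's criterion to $-M$, negative definiteness is equivalent to the family of inequalities $n \geq k$ for $1 \leq k \leq m+1$; the binding constraint is $n > m$ at $k = m + 1$, and the earlier ones follow automatically.

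Cases (6) and (7) are then reduced to Case (4) by exploiting the vanishing intersection numbers above. In Case (6), the row and column for $\wt{C}_2$ decouple, since $\wt{C}_2 \cdot \wt{C}_1 = \wt{C}_2 \cdot \wt{F}_i = 0$, producing a block-diagonal sum of the scalar $n - m$ (negative as $n < m$) with the $2 \times 2$ matrix of Case (3) (negative definite as $n > 1$). In Case (7), the row and column for $\wt{F}_i$ decouple, since $\wt{F}_i \cdot \wt{C}_2 = 0$ and $\wt{F}_i \cdot E_j = 0$ for $j \neq i$, and the remaining matrix in $\wt{C}_2, E_1, \ldots, \widehat{E}_i, \ldots, E_m$ is another instance of the bordered form of Case (4), with $-n$ replaced by $\wt{C}_2^2 = -m$ (using $n = 0$) and $m$ replaced by $m - 1$, so negative definite because $m > m - 1$. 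The only real obstacle is the sign bookkeeping in Case (4); once the Schur complement formula is in hand, the remaining cases follow mechanically from the block decouplings described above.
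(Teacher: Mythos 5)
Your proof is correct and the intersection numbers you record at the outset all match the setup in Section~\ref{The}. The route, however, is genuinely different from the paper's. The paper handles cases (3), (4), (6), (7) by exhibiting an explicit congruence $L^T D L$ with $L$ unit triangular and $D$ a diagonal matrix with negative entries; this is self-contained and avoids invoking Sylvester's criterion, but each case requires writing down a fresh factorization. You instead prove case (4) once by a Schur complement/leading-minor computation, observe that (3) and (5) are $2\times 2$ and trivial, and then reduce (6) and (7) to previously established cases by noticing that the row and column for $\wt{C}_2$ (resp.\ $\wt{F}_i$) decouple because $\wt{C}_1\cdot\wt{C}_2=\wt{C}_2\cdot\wt{F}_i=0$ (resp.\ $\wt{F}_i\cdot\wt{C}_2=0$ and $\wt{F}_i\cdot E_j=0$ for $j\ne i$), so the matrix block-diagonalizes. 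This is arguably cleaner: the block-decoupling observation unifies (6) and (7) with earlier cases and makes the structural reason for negative definiteness transparent, whereas the paper's factorizations must be verified entry-by-entry. Both arguments are complete; the trade-off is explicitness (paper) versus reduction of redundant computation (yours). One small bookkeeping point: your identity ``$n\geq k$ for $1\leq k\leq m+1$'' is the integer form of the strict inequality $n>k-1$ coming from Sylvester; this is fine here since $n$ and $k$ are integers, but worth stating if the argument were to be written up.
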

\begin{proof}
\ref{ndF} The intersection matrix is $-I$, which is negative
definite.

\ref{ndE} The intersection matrix is $-I$, which is negative
definite.

\ref{ndC1Fi} The intersection matrix is
$$
\begin{pmatrix}
-n&1\\
1&-1
\end{pmatrix}.
$$
Since
$$
\begin{pmatrix}
-n&1\\
1&-1
\end{pmatrix}=\begin{pmatrix}
1&-1\\
&1
\end{pmatrix}\begin{pmatrix}
-\left(n-1\right)\\
&-1
\end{pmatrix}\begin{pmatrix}
1\\
-1&1
\end{pmatrix},
$$
it is negative definite.

\ref{ndC1F} The intersection matrix is
$$
\begin{pmatrix}
-n&1&\cdots&1\\
1&-1\\
\vdots&&\ddots\\
1&&&-1
\end{pmatrix}.
$$
Since
$$
\begin{pmatrix}
-n&1&\cdots&1\\
1&-1\\
\vdots&&\ddots\\
1&&&-1
\end{pmatrix}=\begin{pmatrix}
1&-1&\cdots&-1\\
&1\\
&&\ddots\\
&&&1
\end{pmatrix}\begin{pmatrix}
-\left(n-m\right)\\
&-1\\
&&\ddots\\
&&&-1
\end{pmatrix}\begin{pmatrix}
1\\
-1&1\\
\vdots&&\ddots\\
-1&&&1
\end{pmatrix},
$$
it is negative definite.

\ref{ndC2Fi} The intersection matrix is
$$
\begin{pmatrix}
-\left(m-n\right)\\
&-1
\end{pmatrix},
$$
which is negative definite.

\ref{ndC1C2Fi} The intersection matrix is
$$
\begin{pmatrix}
-n&&1\\
&-\left(m-n\right)\\
1&&-1
\end{pmatrix}.
$$
Since
$$
\begin{pmatrix}
-n&&1\\
&-\left(m-n\right)\\
1&&-1
\end{pmatrix}=\begin{pmatrix}
1&&-1\\
&1\\
&&1
\end{pmatrix}\begin{pmatrix}
-\left(n-1\right)\\
&-\left(m-n\right)\\
&&-1
\end{pmatrix}\begin{pmatrix}
1\\
&1\\
-1&&1
\end{pmatrix},
$$
it is negative definite.

\ref{ndC2FiEj} We may assume $i=1$. The intersection matrix is
$$
\begin{pmatrix}
-m&0&1&\cdots&1\\
0&-1\\
1&&-1\\
\vdots&&&\ddots\\
1&&&&-1
\end{pmatrix}.
$$
Since
\begin{multline*}
\begin{pmatrix}
-m&0&1&\cdots&1\\
0&-1\\
1&&-1\\
\vdots&&&\ddots\\
1&&&&-1
\end{pmatrix}=\\
\begin{pmatrix}
1&0&-1&\cdots&-1\\
&1\\
&&1\\
&&&\ddots\\
&&&&1
\end{pmatrix}\begin{pmatrix}
-1\\
&-1\\
&&-1\\
&&&\ddots\\
&&&&-1
\end{pmatrix}\begin{pmatrix}
1\\
0&1\\
-1&&1\\
\vdots&&&\ddots\\
1&&&&1
\end{pmatrix},
\end{multline*}
it is negative definite.
\end{proof}

\section{Upper bounds for the delta-invariant}\label{Upp}

In this section, we give some upper bounds for the
$\delta$-invariant by computing the ratio
$$
\frac{A_{S,\Delta_\beta}\left(E\right)}{S_L\left(E\right)}
$$
for various divisors $E$. In particular, by letting
$E=\wt{C}_1,\wt{C}_2$ we prove Theorem \ref{C12}, and by letting
$E=E_i$ we prove Theorem \ref{E}. They combine to give the necessary
part of the main theorem.

\begin{lem}\label{C1}
The pseudo-effective threshold
$$
\tau\left(-K_{S,\Delta_\beta},\wt{C}_1\right)=\beta_1+\beta_2,
$$
and
$$
\vol\left(-K_{S,\Delta_\beta}-x\wt{C}_1\right)=\begin{cases}
\vol\left(-K_{S,\Delta_\beta}\right)-2\left(2-n\beta_1\right)x-nx^2,&x\in\left[0,\beta_1\right],\\
\left(\beta_1+\beta_2-x\right)\left(4-\left(m-n\right)\left(x-\beta_1+\beta_2\right)\right),&x\in\left(\beta_1,\beta_1+\beta_2\right].
\end{cases}
$$
\end{lem}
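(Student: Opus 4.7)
The plan is to write $D_x := -K_{S,\Delta_\beta} - x\wt{C}_1$ in the Picard basis of Section \ref{The} and apply the nefness criterion of Section \ref{Nef} to identify the range on which $D_x$ itself is nef; then for the remaining range, to compute the Zariski decomposition using the collections of negative-definite curves from Section \ref{Int}. Using $\wt{C}_1 \sim \pi^*(C_2-nF)$ together with \eqref{antican} gives
$$
D_x \sim_\Q \pi^*\bigl((\beta_1+\beta_2-x)C_2 + (2-n\beta_1+nx)F\bigr) - \beta_2\sum_i E_i,
$$
so in the notation of \eqref{nefconc} one has $a = \beta_1+\beta_2-x$, $b = 2-n\beta_1+nx$, $c_i = \beta_2$. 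The binding condition is $D_x \cdot \wt{F}_i = a - c_i = \beta_1 - x \geq 0$; the remaining three inequalities in \eqref{nefconc} hold for $x \in [0,\beta_1]$ by the ampleness bounds \eqref{ampconc}. Thus $D_x$ is nef precisely for $x \in [0, \beta_1]$, and on this interval Proposition \ref{nefvol} together with the standard intersections $C_2^2=n$, $C_2\cdot F=1$, $F^2 = 0$, $E_i^2 = -1$ yields $\vol(D_x) = D_x^2 = \vol(-K_{S,\Delta_\beta}) - 2(2-n\beta_1)x - nx^2$ after a short expansion.

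For $x > \beta_1$ I would compute the Zariski decomposition. Since $D_x \cdot \wt{F}_i = \beta_1 - x < 0$ and Claim \ref{clm}\ref{ndF} shows $\{\wt{F}_i\}$ has intersection matrix $-I$, the natural ansatz is $N = \sum_i e_i \wt{F}_i$. The orthogonality condition \eqref{PNperp} immediately forces $e_i = x - \beta_1$. Substituting $\wt{F}_i = \pi^*F - E_i$ yields
$$
P = \pi^*\bigl((\beta_1+\beta_2-x)C_2 + (2-(m-n)(x-\beta_1))F\bigr) - (\beta_1+\beta_2-x)\sum_i E_i,
$$
and I would verify nefness of $P$ via \eqref{nefconc}. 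The $\wt{F}_i$-constraint holds by construction; the $E_i$-constraint $\beta_1+\beta_2-x \geq 0$ pins the pseudo-effective threshold at $\tau = \beta_1+\beta_2$; and the remaining two constraints reduce to $2-(m-n)(x-\beta_1) \geq 0$ and $2-(m-n)\beta_2 \geq 0$, both consequences of \eqref{ampconc}. Proposition \ref{Zvol} then gives $\vol(D_x) = P^2$, and a routine expansion (using that here $a'=c'=\beta_1+\beta_2-x$, so $P^2 = a'[(n-m)a' + 2b']$) simplifies to $(\beta_1+\beta_2-x)(4 - (m-n)(x-\beta_1+\beta_2))$.

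The main concern in executing this plan is confirming that no additional negative-part curves are needed on $(\beta_1, \beta_1+\beta_2]$---equivalently, that after subtracting off the $\wt{F}_i$ contribution the residual $P$ is already nef throughout the range. This is exactly where the ampleness bounds \eqref{ampconc} enter, and where one uses the hypothesis in Definition \ref{setupdef} that no two blown-up points lie on a common fiber (so the $\wt{F}_i$ are disjoint, giving the clean diagonal intersection matrix). As a consistency check, at $x = \beta_1$ the two branches agree at the common value $4\beta_2 - (m-n)\beta_2^2$, and at $x = \beta_1+\beta_2$ the positive part collapses to a positive multiple of $\pi^*F$, giving $P^2 = 0$ as required for the threshold to be sharp.
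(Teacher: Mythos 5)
Your proposal is correct and follows essentially the same route as the paper: identify the nef range via the criterion \eqref{nefconc} and the ampleness bounds \eqref{ampconc}, compute the Zariski decomposition on $(\beta_1,\beta_1+\beta_2]$ by taking $N=(x-\beta_1)\sum_i\wt{F}_i$ supported on the negative-definite family of Claim \ref{clm}\ref{ndF}, and apply Propositions \ref{nefvol} and \ref{Zvol}. Your explicit continuity checks at $x=\beta_1$ and $x=\beta_1+\beta_2$ are a welcome addition not spelled out in the paper.
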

\begin{proof}
By \eqref{nefconc} and \eqref{ampconc},
$$
-K_{S,\Delta_\beta}-x\wt{C}_1=\pi^*\left(\left(\beta_1+\beta_2-x\right)C_2+\left(2-n\beta_1+nx\right)F\right)-\beta_2\sum_iE_i
$$
is nef when $x\in[0,\beta_1]$. Therefore by Proposition
\ref{nefvol},
$$
\vol\left(-K_{S,\Delta_\beta}-x\wt{C}_1\right)=\vol\left(-K_{S,\Delta_\beta}\right)-2\left(2-n\beta_1\right)x-nx^2.
$$

When $x$ reaches $\beta_1$, the intersection number
$(-K_{S,\Delta_\beta}-x\wt{C}_1)\cdot\wt{F}_i$ decreases to 0. When
$x>\beta_1$, assume there is an effective divisor $N$ supported on
$\displaystyle\bigcup_i\wt{F}_i$ that satisfies \eqref{PNperp}. We
can solve
$$
N=\left(x-\beta_1\right)\sum_i\wt{F}_i.
$$
By \eqref{nefconc} and \eqref{ampconc},
$$
-K_{S,\Delta_\beta}-x\wt{C}_1-N=\pi^*\left(\left(\beta_1+\beta_2-x\right)C_2+\left(2+\left(m-n\right)\left(\beta_1-x\right)\right)F\right)-\left(\beta_1+\beta_2-x\right)\sum_iE_i
$$
is nef when $x\in(\beta_1,\beta_1+\beta_2]$. Also note Claim
\ref{clm}\ref{ndF}. By Definition \ref{ZDecomp}, $N$ is indeed the
negative part of $-K_{S,\Delta_\beta}-x\wt{C}_1$ for
$x\in(\beta_1,\beta_1+\beta_2]$. Therefore by Proposition
\ref{Zvol},
$$
\vol\left(-K_{S,\Delta_\beta}-x\wt{C}_1\right)=\left(\beta_1+\beta_2-x\right)\left(4-\left(m-n\right)\left(x-\beta_1+\beta_2\right)\right).
$$

When $x$ reaches $\beta_1+\beta_2$, the volume
$\vol(-K_{S,\Delta_\beta}-x\wt{C}_1)$ decreases to 0. This implies
$$
\tau\left(-K_{S,\Delta_\beta},\wt{C}_1\right)=\beta_1+\beta_2.
$$
\end{proof}

\begin{lem}\label{C1S}
The expected vanishing order
$$
S_{-K_{S,\Delta_\beta}}\left(\wt{C}_1\right)=\beta_1-\frac{2}{\vol\left(-K_{S,\Delta_\beta}\right)}\left(\beta_1^2-\beta_2^2+\frac{1}{3}\left(-n\beta_1^3+\left(m-n\right)\beta_2^3\right)\right).
$$
\end{lem}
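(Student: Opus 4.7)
The plan is to compute $S_{-K_{S,\Delta_\beta}}(\wt{C}_1)$ directly from Definition \ref{ddfn} using Lemma \ref{C1}. By that lemma, the integrand $\vol(-K_{S,\Delta_\beta}-x\wt{C}_1)$ is given as a piecewise polynomial with break point at $x=\beta_1$, and the pseudo-effective threshold is $\tau=\beta_1+\beta_2$. So I would write
$$
S_{-K_{S,\Delta_\beta}}(\wt{C}_1)=\frac{1}{\vol(-K_{S,\Delta_\beta})}\left(\int_0^{\beta_1}+\int_{\beta_1}^{\beta_1+\beta_2}\right)\vol\bigl(-K_{S,\Delta_\beta}-x\wt{C}_1\bigr)\,dx
$$
and evaluate the two pieces separately.

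For the first piece on $[0,\beta_1]$, the integrand is $\vol(-K_{S,\Delta_\beta})-2(2-n\beta_1)x-nx^2$, which integrates to
$$
\vol(-K_{S,\Delta_\beta})\beta_1-(2-n\beta_1)\beta_1^2-\tfrac{n}{3}\beta_1^3=\vol(-K_{S,\Delta_\beta})\beta_1-2\beta_1^2+\tfrac{2n}{3}\beta_1^3.
$$
For the second piece on $[\beta_1,\beta_1+\beta_2]$, I would make the substitution $u=\beta_1+\beta_2-x$, which sends the interval to $[0,\beta_2]$ and turns $x-\beta_1+\beta_2$ into $2\beta_2-u$, converting the integrand into the polynomial $u(4-(m-n)(2\beta_2-u))=4u-2(m-n)\beta_2 u+(m-n)u^2$. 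Integrating gives $2\beta_2^2-(m-n)\beta_2^3+\tfrac{(m-n)}{3}\beta_2^3=2\beta_2^2-\tfrac{2(m-n)}{3}\beta_2^3$.

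Summing the two contributions and dividing by $\vol(-K_{S,\Delta_\beta})$ yields
$$
S_{-K_{S,\Delta_\beta}}(\wt{C}_1)=\beta_1-\frac{2}{\vol(-K_{S,\Delta_\beta})}\left(\beta_1^2-\beta_2^2+\frac{1}{3}\bigl(-n\beta_1^3+(m-n)\beta_2^3\bigr)\right),
$$
which is exactly the claimed formula. There is no real obstacle here: the lemma is essentially a bookkeeping exercise, and the only thing to be careful about is the substitution in the second integral so that the cubic terms in $\beta_1$ and $\beta_2$ end up with the correct signs $-n$ and $+(m-n)$, respectively; those signs are precisely what will later feed into the equality condition \eqref{equalcon} in the main theorem via $\wt{C}_1^2=-n$ and $\wt{C}_2^2=n-m$.
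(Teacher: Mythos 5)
Your computation is correct and follows exactly the same route as the paper: split the integral at $x=\beta_1$, evaluate the two polynomial pieces from Lemma \ref{C1}, and sum. Both your intermediate values $\vol(-K_{S,\Delta_\beta})\beta_1-2\beta_1^2+\tfrac{2n}{3}\beta_1^3$ and $2\beta_2^2-\tfrac{2(m-n)}{3}\beta_2^3$ agree with the paper's.
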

\begin{proof}
By Lemma \ref{C1},
$$\begin{aligned}
\int_0^{\beta_1}\vol\left(-K_{S,\Delta_\beta}-x\wt{C}_1\right)dx&=\beta_1\vol\left(-K_{S,\Delta_\beta}\right)-2\beta_1^2+\frac{2}{3}n\beta_1^3,\\
\int_{\beta_1}^{\beta_1+\beta_2}\vol\left(-K_{S,\Delta_\beta}-x\wt{C}_1\right)dx&=2\beta_2^2-\frac{2}{3}\left(m-n\right)\beta_2^3.
\end{aligned}$$
Therefore
$$\begin{aligned}
S_{-K_{S,\Delta_\beta}}\left(\wt{C}_1\right)&=\frac{1}{\vol\left(-K_{S,\Delta_\beta}\right)}\int_0^{\beta_1+\beta_2}\vol\left(-K_{S,\Delta_\beta}-x\wt{C}_1\right)dx\\
&=\beta_1-\frac{2}{\vol\left(-K_{S,\Delta_\beta}\right)}\left(\beta_1^2-\beta_2^2+\frac{1}{3}\left(-n\beta_1^3+\left(m-n\right)\beta_2^3\right)\right).
\end{aligned}$$
\end{proof}

\begin{lem}\label{C2}
The pseudo-effective threshold
$$
\tau\left(-K_{S,\Delta_\beta},\wt{C}_2\right)=\beta_1+\beta_2,
$$
and
$$
\vol\left(-K_{S,\Delta_\beta}-x\wt{C}_2\right)=\begin{cases}
\vol\left(-K_{S,\Delta_\beta}\right)-2\left(2-\left(m-n\right)\beta_2\right)x-\left(m-n\right)x^2,&x\in\left[0,\beta_2\right],\\
\left(\beta_1+\beta_2-x\right)\left(4-n\left(x+\beta_1-\beta_2\right)\right),&x\in\left(\beta_2,\beta_1+\beta_2\right].
\end{cases}
$$
\end{lem}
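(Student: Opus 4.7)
My plan is to mirror the proof of Lemma \ref{C1} but with the roles of exceptional curves and fiber transforms swapped. Starting from the formula \eqref{antican} and the linear equivalence $\wt{C}_2 \sim_\Q \pi^*C_2 - \sum_i E_i$, I first rewrite
$$
-K_{S,\Delta_\beta} - x\wt{C}_2 \sim_\Q \pi^*\bigl((\beta_1+\beta_2-x)C_2 + (2-n\beta_1)F\bigr) - (\beta_2-x)\sum_i E_i,
$$
so in the notation of the Section \ref{Nef} nefness criterion, $a = \beta_1+\beta_2-x$, $b = 2-n\beta_1$, and $c_i = \beta_2-x$.

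For $x \in [0,\beta_2]$, I would check all four inequalities of \eqref{nefconc}: the conditions on $c_i$ and $a-c_i$ are immediate, $b \geq 0$ follows from \eqref{ampconc}, and the last one reduces to $2 - (m-n)(\beta_2-x) \geq 0$, which holds since $x \geq 0$ and $2-(m-n)\beta_2 \geq 0$ by \eqref{ampconc}. Thus $-K_{S,\Delta_\beta} - x\wt{C}_2$ is nef, and Proposition \ref{nefvol} gives the volume as the self-intersection; expanding with $\wt{C}_2^2 = n-m$ and $(-K_{S,\Delta_\beta})\cdot\wt{C}_2 = 2 - (m-n)\beta_2$ yields the stated formula on this interval.

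For $x \in (\beta_2, \beta_1+\beta_2]$, the intersection $(-K_{S,\Delta_\beta} - x\wt{C}_2)\cdot E_i = \beta_2-x$ turns negative, signaling that the $E_i$ must enter the negative part. I would look for $N$ supported on $\bigcup_i E_i$: writing $N = \sum_i a_iE_i$ and imposing \eqref{PNperp} against each $E_i$ gives $a_i = x-\beta_2$, i.e.\ $N = (x-\beta_2)\sum_i E_i$. Subtracting this from $-K_{S,\Delta_\beta} - x\wt{C}_2$ cancels the $E_i$-terms entirely, leaving
$$
P := \pi^*\bigl((\beta_1+\beta_2-x)C_2 + (2-n\beta_1)F\bigr).
$$
Applying \eqref{nefconc} to $P$ (with $c_i = 0$) again, the only nontrivial condition is $na+b \geq 0$, which becomes $2 + n(\beta_2-x) \geq 2 - n\beta_1 \geq 0$ on our range. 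Combined with Claim \ref{clm}\ref{ndE}, which supplies the required negative definiteness of $(E_i \cdot E_j)$, Definition \ref{ZDecomp} identifies $P+N$ as the Zariski decomposition.

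It then remains to compute $P^2$ via pullback and the intersection numbers on $\F_n$ ($C_2^2 = n$, $C_2 \cdot F = 1$, $F^2 = 0$); factoring out $(\beta_1+\beta_2-x)$ yields the stated $(\beta_1+\beta_2-x)(4 - n(x+\beta_1-\beta_2))$ by Proposition \ref{Zvol}. This vanishes precisely at $x = \beta_1+\beta_2$, giving $\tau(-K_{S,\Delta_\beta},\wt{C}_2) = \beta_1+\beta_2$. I don't anticipate a serious obstacle; the only subtlety is being sure that no curve outside the list in \eqref{nefconc} forces additional components into $N$, and this is handled uniformly by that nefness criterion.
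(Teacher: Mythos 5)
Your proposal is correct and follows essentially the same path as the paper's proof: nefness on $[0,\beta_2]$ via the criterion of Section \ref{Nef} plus Proposition \ref{nefvol}, then for $x>\beta_2$ a Zariski decomposition with $N=(x-\beta_2)\sum_i E_i$, using Claim \ref{clm}\ref{ndE} for negative definiteness and Proposition \ref{Zvol} for the volume. Your closing remark on why no other curves can enter $N$ is exactly what the uniqueness statement in Definition \ref{ZDecomp} guarantees.
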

\begin{proof}
By \eqref{nefconc} and \eqref{ampconc},
$$
-K_{S,\Delta_\beta}-x\wt{C}_2=\pi^*\left(\left(\beta_1+\beta_2-x\right)C_2+\left(2-n\beta_1\right)F\right)-\left(\beta_2-x\right)\sum_iE_i
$$
is nef when $x\in[0,\beta_2]$. Therefore by Proposition
\ref{nefvol},
$$
\vol\left(-K_{S,\Delta_\beta}-x\wt{C}_2\right)=\vol\left(-K_{S,\Delta_\beta}\right)-2\left(2-\left(m-n\right)\beta_2\right)x-\left(m-n\right)x^2.
$$

When $x$ reaches $\beta_2$, the intersection number
$(-K_{S,\Delta_\beta}-x\wt{C}_2)\cdot E_i$ decreases to 0. When
$x>\beta_2$, assume there is an effective divisor $N$ supported on
$\displaystyle\bigcup_iE_i$ that satisfies \eqref{PNperp}. We can
solve
$$
N=\left(x-\beta_2\right)\sum_iE_i.
$$
By \eqref{nefconc} and \eqref{ampconc},
$$
-K_{S,\Delta_\beta}-x\wt{C}_2-N=\pi^*\left(\left(\beta_1+\beta_2-x\right)C_2+\left(2-n\beta_1\right)F\right)
$$
is nef when $x\in(\beta_2,\beta_1+\beta_2]$. Also note Claim
\ref{clm}\ref{ndE}. By Definition \ref{ZDecomp}, $N$ is indeed the
negative part of $-K_{S,\Delta_\beta}-x\wt{C}_2$ for
$x\in(\beta_2,\beta_1+\beta_2]$. Therefore by Proposition
\ref{Zvol},
$$
\vol\left(-K_{S,\Delta_\beta}-x\wt{C}_2\right)=\left(\beta_1+\beta_2-x\right)\left(4-n\left(x+\beta_1-\beta_2\right)\right).
$$

When $x$ reaches $\beta_1+\beta_2$, the volume
$\vol(-K_{S,\Delta_\beta}-x\wt{C}_2)$ decreases to 0. This implies
$$
\tau\left(-K_{S,\Delta_\beta},\wt{C}_2\right)=\beta_1+\beta_2.
$$
\end{proof}

\begin{lem}\label{C2S}
The expected vanishing order
$$
S_{-K_{S,\Delta_\beta}}\left(\wt{C}_2\right)=\beta_2+\frac{2}{\vol\left(-K_{S,\Delta_\beta}\right)}\left(\beta_1^2-\beta_2^2+\frac{1}{3}\left(-n\beta_1^3+\left(m-n\right)\beta_2^3\right)\right).
$$
\end{lem}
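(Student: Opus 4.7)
The plan is to mimic the proof of Lemma \ref{C1S} verbatim, substituting the piecewise volume formula from Lemma \ref{C2} into the definition
$$
S_{-K_{S,\Delta_\beta}}\left(\wt{C}_2\right)=\frac{1}{\vol\left(-K_{S,\Delta_\beta}\right)}\int_0^{\beta_1+\beta_2}\vol\left(-K_{S,\Delta_\beta}-x\wt{C}_2\right)dx.
$$
Since Lemma \ref{C2} already determines the pseudo-effective threshold as $\beta_1+\beta_2$ and provides explicit polynomial expressions on the two subintervals $[0,\beta_2]$ and $(\beta_2,\beta_1+\beta_2]$, everything reduces to two elementary polynomial integrals.

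First I would integrate on $[0,\beta_2]$. The integrand is
$$
\vol\left(-K_{S,\Delta_\beta}\right)-2\left(2-\left(m-n\right)\beta_2\right)x-\left(m-n\right)x^2,
$$
which contributes
$$
\beta_2\vol\left(-K_{S,\Delta_\beta}\right)-2\beta_2^2+\frac{2}{3}\left(m-n\right)\beta_2^3
$$
after collecting the quadratic and cubic terms in $\beta_2$. Next I would integrate on $(\beta_2,\beta_1+\beta_2]$ after the substitution $u=\beta_1+\beta_2-x$, which converts the integrand $(\beta_1+\beta_2-x)(4-n(x+\beta_1-\beta_2))$ into $u(4-n(2\beta_1-u))=4u-2n\beta_1 u+nu^2$ on $[0,\beta_1]$, yielding $2\beta_1^2-\tfrac{2n}{3}\beta_1^3$.

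Adding the two contributions gives
$$
\beta_2\vol\left(-K_{S,\Delta_\beta}\right)+2\left(\beta_1^2-\beta_2^2\right)+\frac{2}{3}\left(-n\beta_1^3+\left(m-n\right)\beta_2^3\right),
$$
and dividing through by $\vol(-K_{S,\Delta_\beta})$ yields exactly the claimed formula. No genuine obstacle is expected; the only thing to be careful about is the signs arising from the substitution on the second subinterval and the bookkeeping of the $(m-n)\beta_2^3$ term (the linear piece $-2(2-(m-n)\beta_2)x$ contributes $+(m-n)\beta_2^3$, which combines with the $-\tfrac{1}{3}(m-n)\beta_2^3$ coming from the quadratic piece to give the $+\tfrac{2}{3}(m-n)\beta_2^3$ above). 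The symmetry with Lemma \ref{C1S} under the swap $(\beta_1,\beta_2,n)\leftrightarrow(\beta_2,\beta_1,-(m-n))$ (up to the overall sign in front of the bracketed term) provides a sanity check on the final expression.
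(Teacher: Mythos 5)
Your proof is correct and follows the same route as the paper: split the integral at $x=\beta_2$ using the piecewise volume formula from Lemma~\ref{C2}, compute the two elementary polynomial integrals (the paper records exactly the two intermediate values you obtain), add, and divide by $\vol(-K_{S,\Delta_\beta})$. The substitution $u=\beta_1+\beta_2-x$ on the second subinterval is a tidy bookkeeping device but does not change the argument.
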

\begin{proof}
By Lemma \ref{C2},
$$\begin{aligned}
\int_0^{\beta_2}\vol\left(-K_{S,\Delta_\beta}-x\wt{C}_2\right)dx&=\beta_2\vol\left(-K_{S,\Delta_\beta}\right)-2\beta_2^2+\frac{2}{3}\left(m-n\right)\beta_2^3,\\
\int_{\beta_2}^{\beta_1+\beta_2}\vol\left(-K_{S,\Delta_\beta}-x\wt{C}_2\right)dx&=2\beta_1^2-\frac{2}{3}n\beta_1^3.
\end{aligned}$$
Therefore
$$\begin{aligned}
S_{-K_{S,\Delta_\beta}}\left(\wt{C}_1\right)&=\frac{1}{\vol\left(-K_{S,\Delta_\beta}\right)}\int_0^{\beta_1+\beta_2}\vol\left(-K_{S,\Delta_\beta}-x\wt{C}_2\right)dx\\
&=\beta_2+\frac{2}{\vol\left(-K_{S,\Delta_\beta}\right)}\left(\beta_1^2-\beta_2^2+\frac{1}{3}\left(-n\beta_1^3+\left(m-n\right)\beta_2^3\right)\right).
\end{aligned}$$
\end{proof}

\begin{thm}\label{C12}
If
$$
\beta_1^2-\beta_2^2+\frac{1}{3}\left(-n\beta_1^3+\left(m-n\right)\beta_2^3\right)=0,
$$
then $\delta(S,\Delta_\beta)\leq1$. Otherwise
$\delta(S,\Delta_\beta)<1$.
\end{thm}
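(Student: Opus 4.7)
The plan is to apply Definition \ref{ddfn} with $E=\wt C_1$ and $E=\wt C_2$ as the two test divisors and to read off the theorem from a direct comparison of the two resulting ratios $A/S$. The key input is already in hand: Lemmas \ref{C1S} and \ref{C2S} give closed forms for $S_{-K_{S,\Delta_\beta}}(\wt C_i)$, and both expressions involve the \emph{same} quantity
$$
T:=\beta_1^2-\beta_2^2+\tfrac{1}{3}\bigl(-n\beta_1^3+(m-n)\beta_2^3\bigr),
$$
up to a sign, so the two test divisors are essentially dual witnesses.

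First I would record the log discrepancies. Since $\wt C_1$ and $\wt C_2$ are themselves prime divisors on $S$, formula \eqref{Adef} is computed with $f=\mathrm{id}$ and $Y=S$; the pullback $f^*(K_{S,\Delta_\beta})-K_S$ is simply $\Delta_\beta=(1-\beta_1)\wt C_1+(1-\beta_2)\wt C_2$, whose order of vanishing along $\wt C_i$ is $1-\beta_i$. Therefore
$$
A_{S,\Delta_\beta}(\wt C_i)=\beta_i,\qquad i=1,2.
$$
Combining with Lemmas \ref{C1S} and \ref{C2S}, and abbreviating $V:=\vol(-K_{S,\Delta_\beta})$, the two ratios become
$$
\frac{A_{S,\Delta_\beta}(\wt C_1)}{S_{-K_{S,\Delta_\beta}}(\wt C_1)}=\frac{\beta_1}{\beta_1-\tfrac{2T}{V}},\qquad \frac{A_{S,\Delta_\beta}(\wt C_2)}{S_{-K_{S,\Delta_\beta}}(\wt C_2)}=\frac{\beta_2}{\beta_2+\tfrac{2T}{V}}.
$$

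The remainder is a case analysis on the sign of $T$. If $T=0$ both ratios equal $1$, so taking the infimum in the definition of $\delta$ yields $\delta(S,\Delta_\beta)\le 1$. If $T>0$, then $S_{-K_{S,\Delta_\beta}}(\wt C_2)>\beta_2>0$ (note $S$ is automatically positive as the integral of the positive volume function, which also justifies that $\beta_1-\tfrac{2T}{V}>0$ when $T\le 0$, but here we only need the $\wt C_2$ side), so the second ratio is strictly less than $1$ and hence $\delta(S,\Delta_\beta)<1$. Symmetrically, if $T<0$ then $S_{-K_{S,\Delta_\beta}}(\wt C_1)>\beta_1>0$ and the first ratio is strictly less than $1$, giving $\delta(S,\Delta_\beta)<1$ again.

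There is essentially no obstacle: all the analytic work has been absorbed into the Zariski decomposition computations of Section \ref{Upp}. The only mild point to be careful about is the sign bookkeeping and the remark that $S_{-K_{S,\Delta_\beta}}(\wt C_i)>0$ in each of the relevant cases, which follows tautologically from the integral representation since $\vol(-K_{S,\Delta_\beta}-x\wt C_i)>0$ on $(0,\tau)$. Thus the proof reduces to writing down the ratios and choosing the correct test divisor in each regime.
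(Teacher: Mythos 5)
Your proposal is correct and follows exactly the paper's own argument: take $E=\wt{C}_1$ and $E=\wt{C}_2$ as test divisors in Definition \ref{ddfn}, read off $A_{S,\Delta_\beta}(\wt{C}_i)=\beta_i$ from \eqref{Adef}, plug in Lemmas \ref{C1S} and \ref{C2S}, and do the sign case analysis on the quantity you call $T$. The only minor addition you make is the explicit remark that $S_{-K_{S,\Delta_\beta}}(\wt{C}_i)>0$ via the integral representation, which the paper leaves implicit.
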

\begin{proof}
By \eqref{Adef},
$A_{S,\Delta_\beta}(\wt{C}_i)=1-\ord_{\wt{C}_i}\Delta_\beta=\beta_i$,
for $i=1,2$. Recall Lemmas \ref{C1S} and \ref{C2S},
$$
S_{-K_{S,\Delta_\beta}}\left(\wt{C}_i\right)=\begin{cases}
\beta_1-\frac{2}{\vol\left(-K_{S,\Delta_\beta}\right)}\left(\beta_1^2-\beta_2^2+\frac{1}{3}\left(-n\beta_1^3+\left(m-n\right)\beta_2^3\right)\right),&i=1,\\
\beta_2+\frac{2}{\vol\left(-K_{S,\Delta_\beta}\right)}\left(\beta_1^2-\beta_2^2+\frac{1}{3}\left(-n\beta_1^3+\left(m-n\right)\beta_2^3\right)\right),&i=2.
\end{cases}
$$
By Definition \ref{ddfn},
\begin{equation}\label{dupper}
\delta\left(S,\Delta_\beta\right)\leq\min_{i=1,2}\frac{A_{S,\Delta_\beta}\left(\wt{C}_i\right)}{S_{-K_{S,\Delta_\beta}}\left(\wt{C}_i\right)}.
\end{equation}
When
$$
\beta_1^2-\beta_2^2+\frac{1}{3}\left(-n\beta_1^3+\left(m-n\right)\beta_2^3\right)=0,
$$
\eqref{dupper} becomes $\delta(S,\Delta_\beta)\leq1$. Otherwise if
$$
\beta_1^2-\beta_2^2+\frac{1}{3}\left(-n\beta_1^3+\left(m-n\right)\beta_2^3\right)<0,
$$
then \eqref{dupper} gives
$\delta(S,\Delta_\beta)\leq\frac{A_{S,\Delta_\beta}(\wt{C}_1)}{S_{-K_{S,\Delta_\beta}}(\wt{C}_1)}<1$;
if
$$
\beta_1^2-\beta_2^2+\frac{1}{3}\left(-n\beta_1^3+\left(m-n\right)\beta_2^3\right)>0,
$$
then \eqref{dupper} gives
$\delta(S,\Delta_\beta)\leq\frac{A_{S,\Delta_\beta}(\wt{C}_2)}{S_{-K_{S,\Delta_\beta}}(\wt{C}_2)}<1$.
\end{proof}

Recall that we assume $m\geq1$, i.e., $S$ contains some exceptional
curve $E_i$.
\begin{lem}\label{ngm}
Assume $n\geq m$. The pseudo-effective threshold
$$
\tau\left(-K_{S,\Delta_\beta},E_i\right)=2+\left(n-m\right)\beta_2,
$$
and
\begin{multline*}
\vol\left(-K_{S,\Delta_\beta}-xE_i\right)=\\
\begin{cases}
4\left(\beta_1+\beta_2\right)-n\beta_1^2+\left(n-m\right)\beta_2^2-2\beta_2x-x^2,&x\in\left[0,\beta_1\right],\\
4\left(\beta_1+\beta_2\right)-\left(n-1\right)\beta_1^2+\left(n-m\right)\beta_2^2-2\left(\beta_1+\beta_2\right)x,&x\in\left(\beta_1,2-\left(n-1\right)\beta_1\right],\\
\left(n-1\right)\left(\beta_2+\frac{2-x}{n-1}\right)^2-\left(m-1\right)\beta_2^2,&x\in\left(2-\left(n-1\right)\beta_1,2\right],\\
\frac{\left(2+\left(n-m\right)\beta_2-x\right)^2}{n-m},&x\in\left(2,2+\left(n-m\right)\beta_2\right].
\end{cases}
\end{multline*}
\end{lem}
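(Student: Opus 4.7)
My plan is to compute the Zariski decomposition of $D := -K_{S,\Delta_\beta} - xE_i$ on four successive subintervals of $[0, \tau(-K_{S,\Delta_\beta}, E_i)]$, matching the four cases of the lemma. A transition between intervals occurs whenever an intersection $(D - N)\cdot C$ drops to zero for some curve $C$ not yet in the support of $N$, which must then be added. I will use the nefness criterion of Section \ref{Nef} to locate these transitions, Claim \ref{clm} to supply the negative definiteness required by Definition \ref{ZDecomp}, and Proposition \ref{Zvol} to compute the volume as $(D-N)^2$.

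Starting from $D = \pi^*((\beta_1+\beta_2)C_2 + (2-n\beta_1)F) - \beta_2\sum_j E_j - xE_i$, the nefness criterion shows the binding constraint is $D \cdot \wt{F}_i = \beta_1 - x$, so $D$ itself is nef on $[0, \beta_1]$ and Proposition \ref{nefvol} gives case 1 by direct computation. For $x \in (\beta_1, 2-(n-1)\beta_1]$, I set $N = (x-\beta_1)\wt{F}_i$, forced by $(D-N)\cdot \wt{F}_i = 0$; this leaves $D - N$ nef until the intersection $(D-N)\cdot \wt{C}_1 = 2-(n-1)\beta_1 - x$ vanishes, yielding case 2 via Claim \ref{clm}\ref{ndF}.

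For $x > 2-(n-1)\beta_1$ (regime empty when $n=1$) I add $\wt{C}_1$ to the support and solve the $2\times 2$ linear system $(D-N)\cdot \wt{F}_i = (D-N)\cdot \wt{C}_1 = 0$ to obtain $N = \tfrac{nx-2}{n-1}\wt{F}_i + \tfrac{x-2+(n-1)\beta_1}{n-1}\wt{C}_1$. A bookkeeping step rewrites $D - N = a'(\pi^*C_2 - E_i) - \beta_2\sum_{j\neq i}E_j$ with $a' = \beta_2 + (2-x)/(n-1)$, which is nef precisely until $(D-N)\cdot \wt{F}_j = a' - \beta_2$ vanishes at $x = 2$. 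Claim \ref{clm}\ref{ndC1Fi} supplies negative definiteness, and $(D-N)^2 = (n-1)(a')^2 - (m-1)\beta_2^2$ gives case 3.

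For $x > 2$ (regime empty when $n=m$), the intersections with all $\wt{F}_j$ for $j \neq i$ become binding simultaneously; by the permutation symmetry of the $p_j$ together with the uniqueness of the Zariski decomposition, the enlargement must assign every remaining fiber a common coefficient. Solving the resulting linear system yields $N = s\wt{F}_i + t\wt{C}_1 + v\sum_{j\neq i}\wt{F}_j$ with $s = x + \tfrac{x-2}{n-m}$, $t = \beta_1 + \tfrac{x-2}{n-m}$, $v = \tfrac{x-2}{n-m}$, and a direct computation collapses the positive part to $D - N = a'\wt{C}_2$ with $a' = \beta_2 + (2-x)/(n-m)$. Since $\wt{C}_2$ has nonnegative intersection with every curve of $S$ it is nef, Claim \ref{clm}\ref{ndC1F} (valid for $n > m$) supplies negative definiteness, and $(D-N)^2 = (a')^2(n-m) = (2+(n-m)\beta_2 - x)^2/(n-m)$ matches case 4. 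This vanishes at $x = 2+(n-m)\beta_2$, which is therefore the pseudo-effective threshold. The main obstacle is this last step: the naive move of adding $\wt{C}_2$ to the negative part fails because $\wt{C}_2^2 = n-m > 0$ rules it out of any negative-definite support, so one must instead enlarge by all remaining fibers at once and observe that the positive part simplifies to a single multiple of $\wt{C}_2$.
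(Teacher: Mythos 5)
Your proposal is correct and follows essentially the same route as the paper: compute the Zariski decomposition piecewise by tracking which curve's intersection with the positive part drops to zero, enlarge the negative part accordingly, invoke the relevant item of Claim \ref{clm} for negative definiteness, and apply Proposition \ref{Zvol}. Your negative parts and positive parts agree with the paper's in every region (in region 4 the paper's $\sum_j$ runs over all $j$ including $j=i$, so its coefficient of $\wt{F}_i$ is $x + \tfrac{x-2}{n-m}$, matching your $s$). The closing remark explaining why $\wt{C}_2$ itself cannot enter the negative part (its self-intersection being positive when $n>m$) is a helpful observation not made explicit in the paper, but the two arguments are otherwise the same.
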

\begin{proof}
By \eqref{nefconc} and \eqref{ampconc},
$$
-K_{S,\Delta_\beta}-xE_i=\pi^*\left(\left(\beta_1+\beta_2\right)C_2+\left(2-n\beta_1\right)F\right)-\left(\beta_2+x\right)E_i-\sum_{j\neq
i}\beta_2E_j
$$
is nef when $x\in[0,\beta_1]$. Therefore by Proposition
\ref{nefvol},
$$
\vol\left(-K_{S,\Delta_\beta}-xE_i\right)=4\left(\beta_1+\beta_2\right)-n\beta_1^2+\left(n-m\right)\beta_2^2-2\beta_2x-x^2.
$$
When $x$ reaches $\beta_1$, the intersection number
$(-K_{S,\Delta_\beta}-xE_i)\cdot\wt{F}_i$ decreases to 0. When
$x>\beta_1$, assume there is an effective divisor $N$ supported on
$\wt{F}_i$ that satisfies \eqref{PNperp}. We can solve
$$
N=\left(x-\beta_1\right)\wt{F}_i.
$$
By \eqref{nefconc} and \eqref{ampconc},
$$
-K_{S,\Delta_\beta}-xE_i-N=\pi^*\left(\left(\beta_1+\beta_2\right)C_2+\left(2-\left(n-1\right)\beta_1-x\right)F\right)-\left(\beta_1+\beta_2\right)E_i-\sum_{j\neq
i}\beta_2E_j
$$
is nef when $x\in(\beta_1,2-(n-1)\beta_1]$. Also note Claim
\ref{clm}\ref{ndF}. By Definition \ref{ZDecomp}, $N$ is indeed the
negative part of $-K_{S,\Delta_\beta}-xE_i$ for
$x\in(\beta_1,2-(n-1)\beta_1]$. Therefore by Proposition \ref{Zvol},
$$
\vol\left(-K_{S,\Delta_\beta}-xE_i\right)=4\left(\beta_1+\beta_2\right)-\left(n-1\right)\beta_1^2+\left(n-m\right)\beta_2^2-2\left(\beta_1+\beta_2\right)x.
$$

When $x$ reaches $2-(n-1)\beta_1$, the intersection number
$(-K_{S,\Delta_\beta}-xE_i)\cdot\wt{C}_1$ decreases to 0. If $n>1$,
when $x>2-(n-1)\beta_1$, assume there is an effective divisor $N$
supported on $\wt{F}_i\cup\wt{C}_1$ that satisfies \eqref{PNperp}.
We can solve
$$
N=\left(\beta_1-\frac{2-x}{n-1}\right)\wt{C}_1+\frac{nx-2}{n-1}\wt{F}_i.
$$
By \eqref{nefconc} and \eqref{ampconc},
$$
-K_{S,\Delta_\beta}-xE_i-N=\pi^*\left(\left(\beta_2+\frac{2-x}{n-1}\right)C_2\right)-\left(\beta_2+\frac{2-x}{n-1}\right)E_i-\sum_{j\neq
i}\beta_2E_j
$$
is nef when $x\in(2-(n-1)\beta_1,2]$. Also note Claim
\ref{clm}\ref{ndC1Fi}. By Definition \ref{ZDecomp}, $N$ is indeed
the negative part of $-K_{S,\Delta_\beta}-xE_i$ for
$x\in(2-(n-1)\beta_1,2]$. Therefore by Proposition \ref{Zvol},
$$
\vol\left(-K_{S,\Delta_\beta}-xE_i\right)=\left(n-1\right)\left(\beta_2+\frac{2-x}{n-1}\right)^2-\left(m-1\right)\beta_2^2.
$$

When $x$ reaches 2, the intersection number
$(-K_{S,\Delta_\beta}-xE_i)\cdot\wt{F}_j$ decreases to 0 for $j\neq
i$. If $n>m$, when $x>2$, assume there is an effective divisor $N$
supported on $\displaystyle\wt{C}_1\cup\bigcup_j\wt{F}_j$ that
satisfies \eqref{PNperp}. We can solve
$$
N=\left(\beta_1+\frac{x-2}{n-m}\right)\wt{C}_1+x\wt{F}_i+\sum_j\frac{x-2}{n-m}\wt{F}_j.
$$
By \eqref{nefconc} and \eqref{ampconc},
$$
-K_{S,\Delta_\beta}-xE_i-N=\pi^*\left(\left(\beta_2-\frac{x-2}{n-m}\right)C_2\right)-\sum_j\left(\beta_2-\frac{x-2}{n-m}\right)E_i
$$
is nef when $x\in(2,2+(n-m)\beta_2]$. Also note Claim
\ref{clm}\ref{ndC1F}. By Definition \ref{ZDecomp}, $N$ is indeed the
negative part of $-K_{S,\Delta_\beta}-xE_i$ for
$x\in(2,2+(n-m)\beta_2]$. Therefore by Proposition \ref{Zvol},
$$
\vol\left(-K_{S,\Delta_\beta}-xE_i\right)=\frac{\left(2+\left(n-m\right)\beta_2-x\right)^2}{n-m}.
$$

When $x$ reaches $2+(n-m)\beta_2$, the volume
$\vol(-K_{S,\Delta_\beta}-xE_i)$ decreases to 0. This implies
$$
\tau\left(-K_{S,\Delta_\beta},E_i\right)=2+\left(n-m\right)\beta_2.
$$
\end{proof}
\begin{lem}\label{nlm1g2}
Assume $n<m$ and $(n-1)\beta_1\geq(m-n)\beta_2$. The
pseudo-effective threshold
$$
\tau\left(-K_{S,\Delta_\beta},E_i\right)=2,
$$
and
\begin{multline*}
\vol\left(-K_{S,\Delta_\beta}-xE_i\right)=\\
\begin{cases}
4\left(\beta_1+\beta_2\right)-n\beta_1^2+\left(n-m\right)\beta_2^2-2\beta_2x-x^2,&x\in\left[0,\beta_1\right],\\
4\left(\beta_1+\beta_2\right)-\left(n-1\right)\beta_1^2+\left(n-m\right)\beta_2^2-2\left(\beta_1+\beta_2\right)x,&x\in\left(\beta_1,2-\left(n-1\right)\beta_1\right],\\
\left(n-1\right)\left(\beta_2+\frac{2-x}{n-1}\right)^2-\left(m-1\right)\beta_2^2,&x\in\left(2-\left(n-1\right)\beta_1,2-\left(m-n\right)\beta_2\right],\\
\frac{\left(m-1\right)}{\left(m-n\right)\left(n-1\right)}\left(x-2\right)^2,&x\in\left(2-\left(m-n\right)\beta_2,2\right].
\end{cases}
\end{multline*}
\end{lem}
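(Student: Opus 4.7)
The plan is to extend the subdivision strategy of Lemma \ref{ngm}, building up the negative part $N$ of the Zariski decomposition of $D := -K_{S,\Delta_\beta} - xE_i$ one curve at a time as $x$ grows. The hypothesis $(n-1)\beta_1 \geq (m-n)\beta_2 > 0$ forces $n \geq 2$, and orders the break points $\beta_1 \leq 2-(n-1)\beta_1 \leq 2-(m-n)\beta_2 < 2$. In the first three intervals the analysis is verbatim cases 1--3 of Lemma \ref{ngm}: at each stage, \eqref{nefconc} verifies nefness of the residual $P = D - N$ while Claim \ref{clm}(\ref{ndF}) or Claim \ref{clm}(\ref{ndC1Fi}) supplies negative definiteness of the intersection matrix of the support of $N$. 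The one difference is that case 3 now terminates at $x = 2-(m-n)\beta_2$, the first value at which
$$(D-N)\cdot\wt{C}_2 = 2 + (n-m)\beta_2 - x$$
vanishes when $n < m$.

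For the new fourth interval $x \in (2-(m-n)\beta_2, 2]$, I would postulate the negative part to take the form $N = a\wt{C}_1 + b\wt{F}_i + c\wt{C}_2$ and solve the $3\times 3$ linear system $(D-N)\cdot\wt{C}_1 = (D-N)\cdot\wt{F}_i = (D-N)\cdot\wt{C}_2 = 0$, using the intersection numbers from Section \ref{The}. The solution is $a = \beta_1 - (2-x)/(n-1)$, $b = (nx-2)/(n-1)$, $c = \beta_2 + (x-2)/(m-n)$, each non-negative on this interval. Expanding with $\wt{C}_1 \sim \pi^*(C_2 - nF)$, $\wt{F}_i = \pi^*F - E_i$, and $\wt{C}_2 \sim \pi^*C_2 - \sum_j E_j$, the residual telescopes to
$$P = \pi^*\!\left(\tfrac{(2-x)(m-1)}{(n-1)(m-n)}\,C_2\right) - \tfrac{(2-x)(m-1)}{(n-1)(m-n)}\,E_i - \tfrac{2-x}{m-n}\sum_{j\neq i}E_j.$$
Nefness of $P$ then follows by a direct application of \eqref{nefconc} using $n < m$ and $x \leq 2$; negative definiteness of $\{\wt{C}_1, \wt{F}_i, \wt{C}_2\}$ is Claim \ref{clm}(\ref{ndC1C2Fi}), applicable since $1 < n < m$. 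Hence $N$ is the negative part by Definition \ref{ZDecomp}, and Proposition \ref{Zvol} computes $\vol(D) = P^2 = (m-1)(x-2)^2/((m-n)(n-1))$. At $x = 2$ the simplification gives $P = 0$, so this volume vanishes and $\tau(-K_{S,\Delta_\beta}, E_i) = 2$. Continuity with the case 3 formula at the left endpoint $x = 2-(m-n)\beta_2$ is a direct substitution.

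The main obstacle is identifying the correct ansatz for $N$ in case 4 and carrying the bookkeeping through cleanly: once the formulas for $a, b, c$ are found and the cancellation of the $\pi^*F$ coefficient in $P$ is observed, every remaining check (nefness via \eqref{nefconc}, volume formula, continuity with case 3) reduces to a mechanical substitution.
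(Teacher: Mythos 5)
Your proposal is correct and follows the same route as the paper's proof: the same Zariski decompositions on the same four intervals, with the negative part supported on $\widehat F_i$, then $\wt C_1\cup\wt F_i$, then $\wt C_1\cup\wt F_i\cup\wt C_2$, verified via \eqref{nefconc} and Claim~\ref{clm}\ref{ndF}, \ref{ndC1Fi}, \ref{ndC1C2Fi}. Your ordering of break points, the coefficients $a,b,c$, the telescoped $P$, and the final volume all match the paper exactly.
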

\begin{proof}
By \eqref{nefconc} and \eqref{ampconc},
$$
-K_{S,\Delta_\beta}-xE_i=\pi^*\left(\left(\beta_1+\beta_2\right)C_2+\left(2-n\beta_1\right)F\right)-\left(\beta_2+x\right)E_i-\sum_{j\neq
i}\beta_2E_j
$$
is nef when $x\in[0,\beta_1]$. Therefore by Proposition
\ref{nefvol},
$$
\vol\left(-K_{S,\Delta_\beta}-xE_i\right)=4\left(\beta_1+\beta_2\right)-n\beta_1^2+\left(n-m\right)\beta_2^2-2\beta_2x-x^2.
$$
When $x$ reaches $\beta_1$, the intersection number
$(-K_{S,\Delta_\beta}-xE_i)\cdot\wt{F}_i$ decreases to 0. When
$x>\beta_1$, assume there is an effective divisor $N$ supported on
$\wt{F}_i$ that satisfies \eqref{PNperp}. We can solve
$$
N=\left(x-\beta_1\right)\wt{F}_i.
$$
By \eqref{nefconc} and \eqref{ampconc},
$$
-K_{S,\Delta_\beta}-xE_i-N=\pi^*\left(\left(\beta_1+\beta_2\right)C_2+\left(2-\left(n-1\right)\beta_1-x\right)F\right)-\left(\beta_1+\beta_2\right)E_i-\sum_{j\neq
i}\beta_2E_j
$$
is nef when $x\in(\beta_1,2-(n-1)\beta_1]$. Also note Claim
\ref{clm}\ref{ndF}. By Definition \ref{ZDecomp}, $N$ is indeed the
negative part of $-K_{S,\Delta_\beta}-xE_i$ for
$x\in(\beta_1,2-(n-1)\beta_1]$. Therefore by Proposition \ref{Zvol},
$$
\vol\left(-K_{S,\Delta_\beta}-xE_i\right)=4\left(\beta_1+\beta_2\right)-\left(n-1\right)\beta_1^2+\left(n-m\right)\beta_2^2-2\left(\beta_1+\beta_2\right)x.
$$

When $x$ reaches $2-(n-1)\beta_1$, the intersection number
$(-K_{S,\Delta_\beta}-xE_i)\cdot\wt{C}_1$ decreases to 0. When
$x>2-(n-1)\beta_1$, assume there is an effective divisor $N$
supported on $\wt{F}_i\cup\wt{C}_1$ that satisfies \eqref{PNperp}.
We can solve
$$
N=\left(\beta_1-\frac{2-x}{n-1}\right)\wt{C}_1+\frac{nx-2}{n-1}\wt{F}_i.
$$
By \eqref{nefconc} and \eqref{ampconc},
$$
-K_{S,\Delta_\beta}-xE_i-N=\pi^*\left(\left(\beta_2+\frac{2-x}{n-1}\right)C_2\right)-\left(\beta_2+\frac{2-x}{n-1}\right)E_i-\sum_{j\neq
i}\beta_2E_j
$$
is nef when $x\in(2-(n-1)\beta_1,2-(m-n)\beta_2]$. Also note Claim
\ref{clm}\ref{ndC1Fi}. By Definition \ref{ZDecomp}, $N$ is indeed
the negative part of $-K_{S,\Delta_\beta}-xE_i$ for
$x\in(2-(n-1)\beta_1,2-(m-n)\beta_2]$. Therefore by Proposition
\ref{Zvol},
$$
\vol\left(-K_{S,\Delta_\beta}-xE_i\right)=\left(n-1\right)\left(\beta_2+\frac{2-x}{n-1}\right)^2-\left(m-1\right)\beta_2^2.
$$

When $x$ reaches $2-(m-n)\beta_2$, the intersection number
$(-K_{S,\Delta_\beta}-xE_i)\cdot\wt{C}_2$ decreases to 0. When
$x>2-(m-n)\beta_2$, assume there is an effective divisor $N$
supported on $\wt{F}_i\cup\wt{C}_1\cup\wt{C}_2$ that satisfies
\eqref{PNperp}. We can solve
$$
N=\left(\beta_1-\frac{2-x}{n-1}\right)\wt{C}_1+\frac{nx-2}{n-1}\wt{F}_i+\left(\beta_2+\frac{x-2}{m-n}\right)\wt{C}_2.
$$
By \eqref{nefconc} and \eqref{ampconc},
$$
-K_{S,\Delta_\beta}-xE_i-N=\pi^*\left(\frac{m-1}{\left(n-1\right)\left(m-n\right)}\left(2-x\right)C_2\right)-\frac{m-1}{\left(n-1\right)\left(m-n\right)}\left(2-x\right)E_i-\sum_{j\neq
i}\frac{2-x}{m-n}E_j
$$
is nef when $x\in(2-(m-n)\beta_2,2]$. Also note Claim
\ref{clm}\ref{ndC1C2Fi}. By Definition \ref{ZDecomp}, $N$ is indeed
the negative part of $-K_{S,\Delta_\beta}-xE_i$ for
$x\in(2-(m-n)\beta_2,2]$. Therefore by Proposition \ref{Zvol},
$$
\vol\left(-K_{S,\Delta_\beta}-xE_i\right)=\frac{\left(m-1\right)}{\left(m-n\right)\left(n-1\right)}\left(x-2\right)^2.
$$

When $x$ reaches 2, the volume $\vol(-K_{S,\Delta_\beta}-xE_i)$
decreases to 0. This implies
$$
\tau\left(-K_{S,\Delta_\beta},E_i\right)=2.
$$
\end{proof}
\begin{lem}\label{1l2ng1}
Assume $(n-1)\beta_1<(m-n)\beta_2$ and $n\geq1$. The
pseudo-effective threshold
$$
\tau\left(-K_{S,\Delta_\beta},E_i\right)=2,
$$
and
\begin{multline*}
\vol\left(-K_{S,\Delta_\beta}-xE_i\right)=\\
\begin{cases}
4\left(\beta_1+\beta_2\right)-n\beta_1^2+\left(n-m\right)\beta_2^2-2\beta_2x-x^2,&x\in\left[0,\beta_1\right],\\
4\left(\beta_1+\beta_2\right)-\left(n-1\right)\beta_1^2+\left(n-m\right)\beta_2^2-2\left(\beta_1+\beta_2\right)x,&x\in\left(\beta_1,2-\left(m-n\right)\beta_2\right],\\
\frac{4+\left(m-n\right)\left(4-\left(n-1\right)\beta_1\right)\beta_1-2\left(2+\left(m-n\right)\beta_1\right)x+x^2}{m-n},&x\in\left(2-\left(m-n\right)\beta_2,2-\left(n-1\right)\beta_1\right],\\
\frac{\left(m-1\right)}{\left(m-n\right)\left(n-1\right)}\left(x-2\right)^2,&x\in\left(2-\left(n-1\right)\beta_1,2\right].
\end{cases}
\end{multline*}
\end{lem}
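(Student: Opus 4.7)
The plan is to follow the same pattern used in Lemmas \ref{ngm} and \ref{nlm1g2}: partition $[0,\tau]$ into subintervals determined by the successive vanishing of the intersection of $-K_{S,\Delta_\beta}-xE_i$ with certain curves, explicitly compute the Zariski decomposition on each subinterval by solving \eqref{PNperp}, and apply Proposition \ref{Zvol}. The only difference from Lemma \ref{nlm1g2} is the ordering of the critical values of $x$: the hypothesis $(n-1)\beta_1<(m-n)\beta_2$ is exactly equivalent to $2-(m-n)\beta_2<2-(n-1)\beta_1$, so now the intersection number with $\wt{C}_2$ hits zero before that with $\wt{C}_1$, and $\wt{C}_2$ enters the negative part first.

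For $x\in[0,\beta_1]$ and $x\in(\beta_1,2-(m-n)\beta_2]$, I would copy verbatim the argument from Lemma \ref{nlm1g2}: on the first interval $-K_{S,\Delta_\beta}-xE_i$ is nef by \eqref{nefconc}, so Proposition \ref{nefvol} gives the quoted self-intersection formula; on the second interval, $N=(x-\beta_1)\wt{F}_i$ is the negative part (nefness of the residual checked via \eqref{nefconc}, negative definiteness from Claim \ref{clm}\ref{ndF}), and Proposition \ref{Zvol} gives the second line.

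For $x\in(2-(m-n)\beta_2,2-(n-1)\beta_1]$, I write an ansatz $N=\alpha\wt{F}_i+\gamma\wt{C}_2$ and impose \eqref{PNperp} against $\wt{F}_i$ and $\wt{C}_2$. Using the intersection numbers $\wt{F}_i^2=-1$, $\wt{C}_2^2=m-n$, $\wt{F}_i\cdot\wt{C}_2=0$ (these come from Section \ref{The}), the linear system decouples and yields $\alpha=x-\beta_1$ and $\gamma=\bigl(x-(2-(m-n)\beta_2)\bigr)/(m-n)$. I then verify that the residual $-K_{S,\Delta_\beta}-xE_i-N$ is nef by checking \eqref{nefconc} against $E_i$, $\wt{F}_i$, $\wt{C}_1$, $\wt{C}_2$ on this subinterval; negative definiteness of $(\wt{F}_i,\wt{C}_2)$ is Claim \ref{clm}\ref{ndC2Fi}. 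Expanding $(-K_{S,\Delta_\beta}-xE_i-N)^2$ via Proposition \ref{Zvol} should produce the third line of the volume formula after routine simplification.

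For $x\in(2-(n-1)\beta_1,2]$ (nonempty only when $n>1$), I enlarge the ansatz to $N=\alpha\wt{F}_i+\beta\wt{C}_1+\gamma\wt{C}_2$ and solve the $3\times 3$ system from \eqref{PNperp} against these three curves; the resulting coefficients should coincide with those in Lemma \ref{nlm1g2}'s last step (the same Zariski support). Claim \ref{clm}\ref{ndC1C2Fi} handles negative definiteness, the residual nefness is again a check of \eqref{nefconc}, and Proposition \ref{Zvol} gives the fourth line. When $n=1$, the condition $(n-1)\beta_1<(m-n)\beta_2$ forces $0<(m-1)\beta_2$ and the interval $(2-(n-1)\beta_1,2]$ is empty, so Claim \ref{clm}\ref{ndC1C2Fi} is not needed. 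Finally, the volume vanishes at $x=2$, giving $\tau(-K_{S,\Delta_\beta},E_i)=2$. The main obstacle is bookkeeping: checking that each candidate negative part $N$ is effective on the correct interval and that the residual satisfies every inequality in \eqref{nefconc} (in particular against $\wt{C}_1$ in the third interval, where the constraint $x\leq 2-(n-1)\beta_1$ is what makes $\wt{C}_1$ still nef against the residual).
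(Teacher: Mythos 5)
Your proposal matches the paper's proof almost step for step: the same subdivision of $[0,2]$ driven by the vanishing of the intersection numbers with $\wt{F}_i$, $\wt{C}_2$, and then $\wt{C}_1$, the same candidate negative parts on each interval, and the same appeals to Claims \ref{clm}\ref{ndF}, \ref{ndC2Fi}, \ref{ndC1C2Fi} together with Propositions \ref{nefvol} and \ref{Zvol}, including the observation that the last interval is empty when $n=1$. One small slip: you write $\wt{C}_2^2=m-n$, but from Section \ref{The} (or directly from Claim \ref{clm}\ref{ndC2Fi}) one has $\wt{C}_2^2=(\pi^*C_2-\sum E_i)^2=n-m<0$; with the sign you stated, solving \eqref{PNperp} against $\wt{C}_2$ would flip the sign of $\gamma$, so be sure to carry the correct self-intersection through the computation even though the final $\gamma=\bigl(x-(2-(m-n)\beta_2)\bigr)/(m-n)$ you report agrees with the paper's coefficient $\beta_2-\frac{2-x}{m-n}$.
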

\begin{proof}
By \eqref{nefconc} and \eqref{ampconc},
$$
-K_{S,\Delta_\beta}-xE_i=\pi^*\left(\left(\beta_1+\beta_2\right)C_2+\left(2-n\beta_1\right)F\right)-\left(\beta_2+x\right)E_i-\sum_{j\neq
i}\beta_2E_j
$$
is nef when $x\in[0,\beta_1]$. Therefore by Proposition
\ref{nefvol},
$$
\vol\left(-K_{S,\Delta_\beta}-xE_i\right)=4\left(\beta_1+\beta_2\right)-n\beta_1^2+\left(n-m\right)\beta_2^2-2\beta_2x-x^2.
$$
When $x$ reaches $\beta_1$, the intersection number
$(-K_{S,\Delta_\beta}-xE_i)\cdot\wt{F}_i$ decreases to 0. When
$x>\beta_1$, assume there is an effective divisor $N$ supported on
$\wt{F}_i$ that satisfies \eqref{PNperp}. We can solve
$$
N=\left(x-\beta_1\right)\wt{F}_i.
$$
By \eqref{nefconc} and \eqref{ampconc},
$$
-K_{S,\Delta_\beta}-xE_i-N=\pi^*\left(\left(\beta_1+\beta_2\right)C_2+\left(2-\left(n-1\right)\beta_1-x\right)F\right)-\left(\beta_1+\beta_2\right)E_i-\sum_{j\neq
i}\beta_2E_j
$$
is nef when $x\in(\beta_1,2-(m-n)\beta_2]$. Also note Claim
\ref{clm}\ref{ndF}. By Definition \ref{ZDecomp}, $N$ is indeed the
negative part of $-K_{S,\Delta_\beta}-xE_i$ for
$x\in(\beta_1,2-(m-n)\beta_2]$. Therefore by Proposition \ref{Zvol},
$$
\vol\left(-K_{S,\Delta_\beta}-xE_i\right)=4\left(\beta_1+\beta_2\right)-\left(n-1\right)\beta_1^2+\left(n-m\right)\beta_2^2-2\left(\beta_1+\beta_2\right)x.
$$

When $x$ reaches $2-(m-n)\beta_2$, the intersection number
$(-K_{S,\Delta_\beta}-xE_i)\cdot\wt{C}_2$ decreases to 0. When
$x>2-(m-n)\beta_2$, assume there is an effective divisor $N$
supported on $\wt{F}_i\cup\wt{C}_2$ that satisfies \eqref{PNperp}.
We can solve
$$
N=\left(\beta_2-\frac{2-x}{m-n}\right)\wt{C}_2+\left(x-\beta_1\right)\wt{F}_i.
$$
By \eqref{nefconc} and \eqref{ampconc},
\begin{multline*}
-K_{S,\Delta_\beta}-xE_i-N=\pi^*\left(\left(\beta_1+\frac{2-x}{m-n}\right)C_2+\left(2-\left(n-1\right)\beta_1-x\right)F\right)-\left(\beta_1+\frac{2-x}{m-n}\right)E_i\\
-\sum_{j\neq i}\frac{2-x}{m-n}E_j
\end{multline*}
is nef when $x\in(2-(m-n)\beta_2,2-(n-1)\beta_1]$. Also note Claim
\ref{clm}\ref{ndC2Fi}. By Definition \ref{ZDecomp}, $N$ is indeed
the negative part of $-K_{S,\Delta_\beta}-xE_i$ for
$x\in(2-(m-n)\beta_2,2-(n-1)\beta_1]$. Therefore by Proposition
\ref{Zvol},
$$
\vol\left(-K_{S,\Delta_\beta}-xE_i\right)=\frac{4+\left(m-n\right)\left(4-\left(n-1\right)\beta_1\right)\beta_1-2\left(2+\left(m-n\right)\beta_1\right)x+x^2}{m-n}.
$$

When $x$ reaches $2-(n-1)\beta_1$, the intersection number
$(-K_{S,\Delta_\beta}-xE_i)\cdot\wt{C}_1$ decreases to 0. If $n>1$,
when $x>2-(n-1)\beta_1$, assume there is an effective divisor $N$
supported on $\wt{F}_i\cup\wt{C}_1\cup\wt{C}_2$ that satisfies
\eqref{PNperp}. We can solve
$$
N=\left(\beta_1-\frac{2-x}{n-1}\right)\wt{C}_1+\left(\beta_2-\frac{2-x}{m-n}\right)\wt{C}_2+\frac{nx-2}{n-1}\wt{F}_i.
$$
By \eqref{nefconc} and \eqref{ampconc},
$$
-K_{S,\Delta_\beta}-xE_i-N=\pi^*\left(\frac{m-1}{\left(n-1\right)\left(m-n\right)}\left(2-x\right)C_2\right)-\frac{m-1}{\left(n-1\right)\left(m-n\right)}\left(2-x\right)E_i-\sum_{j\neq
i}\frac{2-x}{m-n}E_j
$$
is nef when $x\in(2-(n-1)\beta_1,2]$. Also note Claim
\ref{clm}\ref{ndC1C2Fi}. By Definition \ref{ZDecomp}, $N$ is indeed
the negative part of $-K_{S,\Delta_\beta}-xE_i$ for
$x\in(2-(n-1)\beta_1,2]$. Therefore by Proposition \ref{Zvol},
$$
\vol\left(-K_{S,\Delta_\beta}-xE_i\right)=\frac{\left(m-1\right)}{\left(m-n\right)\left(n-1\right)}\left(x-2\right)^2.
$$

When $x$ reaches 2, the volume $\vol(-K_{S,\Delta_\beta}-xE_i)$
decreases to 0. This implies
$$
\tau\left(-K_{S,\Delta_\beta},E_i\right)=2.
$$
\end{proof}
\begin{lem}\label{ne0}
Assume $n=0$. The pseudo-effective threshold
$$
\tau\left(-K_{S,\Delta_\beta},E_i\right)=2+\beta_1,
$$
and
$$
\vol\left(-K_{S,\Delta_\beta}-xE_i\right)=\begin{cases}
4\left(\beta_1+\beta_2\right)-m\beta_2^2-2\beta_2x-x^2,&x\in\left[0,\beta_1\right],\\
4\left(\beta_1+\beta_2\right)+\beta_1^2-m\beta_2^2-2\left(\beta_1+\beta_2\right)x,&x\in\left(\beta_1,2-m\beta_2\right],\\
\frac{4+m\left(4+\beta_1\right)\beta_1-2\left(2+m\beta_1\right)x+x^2}{m},&x\in\left(2-m\beta_2,2\right],\\
\left(2+\beta_1-x\right)^2,&x\in\left(2,2+\beta_1\right].
\end{cases}
$$
\end{lem}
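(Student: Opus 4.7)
The strategy closely mirrors that of Lemmas \ref{ngm}, \ref{nlm1g2} and \ref{1l2ng1}: partition $[0, \tau(-K_{S,\Delta_\beta}, E_i)]$ into sub-intervals according to which curve's intersection with the residual $(-K_{S,\Delta_\beta}-xE_i) - N$ first drops to zero, construct the Zariski negative part $N$ in each regime, verify that the resulting positive part $P$ is nef via the criterion \eqref{nefconc}, invoke the appropriate item of Claim \ref{clm} for negative definiteness of the support of $N$, and then compute $\vol(-K_{S,\Delta_\beta} - xE_i) = P^2$ using Proposition \ref{nefvol} or Proposition \ref{Zvol}. Setting $n = 0$ throughout, the ampleness condition \eqref{ampconc} gives the simplification $b = 2 - n\beta_1 = 2$ and implies $m\beta_2 < 2$, so the cutoffs $\beta_1 < 2 - m\beta_2 < 2 < 2 + \beta_1$ arise in the expected order.

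The first three intervals are essentially a specialization of prior arguments. On $[0, \beta_1]$ the divisor is already nef; at $x = \beta_1$ the intersection against $\wt{F}_i$ vanishes, so on $(\beta_1, 2-m\beta_2]$ one sets $N = (x-\beta_1)\wt{F}_i$ (Claim \ref{clm}\ref{ndF}); at $x = 2-m\beta_2$ the intersection against $\wt{C}_2$ vanishes, so on $(2-m\beta_2, 2]$ one sets $N = (x-\beta_1)\wt{F}_i + (\beta_2 - (2-x)/m)\wt{C}_2$ (Claim \ref{clm}\ref{ndC2Fi}, valid since $n < m$). In each case \eqref{nefconc} is verified directly and the self-intersection is computed using $\pi^* C_2 \cdot \pi^* F = 1$, $\wt{C}_2^2 = -m$, and $C_2^2 = 0$.

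The genuinely new step, and the main obstacle, is the transition at $x = 2$. For $n \geq 1$, Lemma \ref{1l2ng1} handles this range by enlarging $N$ with $\wt{C}_1$, but when $n=0$ the curve $\wt{C}_1$ has $\wt{C}_1^2 = 0$ and Claim \ref{clm}\ref{ndC1Fi} is unavailable. The correct substitute is to track the exceptional curves: a direct computation shows $(D - N)\cdot E_j = (2-x)/m$ for $j \neq i$ in the preceding interval, which vanishes exactly at $x = 2$. Thus on $(2, 2+\beta_1]$ one seeks $N$ of the form $\alpha \wt{F}_i + \gamma \wt{C}_2 + \sum_{j \neq i} \epsilon_j E_j$; solving the orthogonality system \eqref{PNperp} (which decouples nicely because $\wt{F}_i \cdot \wt{C}_2 = 0$ and $\wt{F}_i \cdot E_j = 0$ for $j \neq i$) yields
$$
N = (x - \beta_1)\wt{F}_i + (x + \beta_2 - 2)\wt{C}_2 + (x - 2)\sum_{j \neq i} E_j,
$$
which is effective precisely for $x \in [2, 2+\beta_1]$. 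Negative definiteness of the intersection matrix is exactly the content of Claim \ref{clm}\ref{ndC2FiEj}, which was formulated for this $n = 0$ regime. A short check using \eqref{nefconc} shows $P = D - N$ is nef with $c_j = 0$ for $j \neq i$ and $a = c_i = 2 + \beta_1 - x$, so $P = (2+\beta_1-x)(\pi^*(C_2 + F) - E_i)$ and $P^2 = (2+\beta_1-x)^2$, matching the stated formula. Finally, $P^2$ vanishes at $x = 2 + \beta_1$, so $\tau(-K_{S,\Delta_\beta}, E_i) = 2 + \beta_1$.
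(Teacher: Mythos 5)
Your proposal is correct and follows essentially the same approach as the paper: the same four-interval partition, the same negative parts $N$ (including the crucial fourth-interval choice $N = (x-\beta_1)\wt{F}_i + (\beta_2+x-2)\wt{C}_2 + (x-2)\sum_{j\neq i}E_j$), the same invocation of Claim \ref{clm}\ref{ndC2FiEj}, and the same resulting positive part $P$ and volume. Your remarks on why $\wt{C}_1$ cannot enter the negative part when $n=0$ and how the orthogonality system decouples are helpful glosses, but the underlying computation coincides with the paper's.
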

\begin{proof}
By \eqref{nefconc} and \eqref{ampconc},
$$
-K_{S,\Delta_\beta}-xE_i=\pi^*\left(\left(\beta_1+\beta_2\right)C_2+2F\right)-\left(\beta_2+x\right)E_i-\sum_{j\neq
i}\beta_2E_j
$$
is nef when $x\in[0,\beta_1]$. Therefore by Proposition
\ref{nefvol},
$$
\vol\left(-K_{S,\Delta_\beta}-xE_i\right)=4\left(\beta_1+\beta_2\right)-m\beta_2^2-2\beta_2x-x^2.
$$
When $x$ reaches $\beta_1$, the intersection number
$(-K_{S,\Delta_\beta}-xE_i)\cdot\wt{F}_i$ decreases to 0. When
$x>\beta_1$, assume there is an effective divisor $N$ supported on
$\wt{F}_i$ that satisfies \eqref{PNperp}. We can solve
$$
N=\left(x-\beta_1\right)\wt{F}_i.
$$
By \eqref{nefconc} and \eqref{ampconc},
$$
-K_{S,\Delta_\beta}-xE_i-N=\pi^*\left(\left(\beta_1+\beta_2\right)C_2+\left(2+\beta_1-x\right)F\right)-\left(\beta_1+\beta_2\right)E_i-\sum_{j\neq
i}\beta_2E_j
$$
is nef when $x\in(\beta_1,2-m\beta_2]$. Also note Claim
\ref{clm}\ref{ndF}. By Definition \ref{ZDecomp}, $N$ is indeed the
negative part of $-K_{S,\Delta_\beta}-xE_i$ for
$x\in(\beta_1,2-m\beta_2]$. Therefore by Proposition \ref{Zvol},
$$
\vol\left(-K_{S,\Delta_\beta}-xE_i\right)=4\left(\beta_1+\beta_2\right)+\beta_1^2-m\beta_2^2-2\left(\beta_1+\beta_2\right)x.
$$

When $x$ reaches $2-m\beta_2$, the intersection number
$(-K_{S,\Delta_\beta}-xE_i)\cdot\wt{C}_2$ decreases to 0. When
$x>2-m\beta_2$, assume there is an effective divisor $N$ supported
on $\wt{F}_i\cup\wt{C}_2$ that satisfies \eqref{PNperp}. We can
solve
$$
N=\left(\beta_2-\frac{2-x}{m}\right)\wt{C}_2+\left(x-\beta_1\right)\wt{F}_i.
$$
By \eqref{nefconc} and \eqref{ampconc},
$$
-K_{S,\Delta_\beta}-xE_i-N=\pi^*\left(\left(\beta_1+\frac{2-x}{m}\right)C_2+\left(2+\beta_1-x\right)F\right)-\left(\beta_1+\frac{2-x}{m}\right)E_i-\sum_{j\neq
i}\frac{2-x}{m}E_j
$$
is nef when $x\in(2-m\beta_2,2]$. Also note Claim
\ref{clm}\ref{ndC2Fi}. By Definition \ref{ZDecomp}, $N$ is indeed
the negative part of $-K_{S,\Delta_\beta}-xE_i$ for
$x\in(2-m\beta_2,2]$. Therefore by Proposition \ref{Zvol},
$$
\vol\left(-K_{S,\Delta_\beta}-xE_i\right)=\frac{4+m\left(4+\beta_1\right)\beta_1-2\left(2+m\beta_1\right)x+x^2}{m}.
$$

When $x$ reaches $2$, the intersection number
$(-K_{S,\Delta_\beta}-xE_i)\cdot E_j$ decreases to 0 for $j\neq i$.
When $x>2$, assume there is an effective divisor $N$ supported on
$\displaystyle\wt{F}_i\cup\wt{C}_2\cup\bigcup_{j\neq i}E_j$ that
satisfies \eqref{PNperp}. We can solve
$$
N=\left(\beta_2+x-2\right)\wt{C}_2+\left(x-\beta_1\right)\wt{F}_i+\sum_{j\neq
i}\left(x-2\right)E_j.
$$
By \eqref{nefconc} and \eqref{ampconc},
$$
-K_{S,\Delta_\beta}-xE_i-N=\pi^*\left(\left(2+\beta_1-x\right)C_2+\left(2+\beta_1-x\right)F\right)-\left(2+\beta_1-x\right)E_i
$$
is nef when $x\in(2,2+\beta_1]$. Also note Claim
\ref{clm}\ref{ndC2FiEj}. By Definition \ref{ZDecomp}, $N$ is indeed
the negative part of $-K_{S,\Delta_\beta}-xE_i$ for
$x\in(2,2+\beta_1]$. Therefore by Proposition \ref{Zvol},
$$
\vol\left(-K_{S,\Delta_\beta}-xE_i\right)=\left(2+\beta_1-x\right)^2.
$$

When $x$ reaches $2+\beta_1$, the volume
$\vol(-K_{S,\Delta_\beta}-xE_i)$ decreases to 0. This implies
$$
\tau\left(-K_{S,\Delta_\beta},E_i\right)=2+\beta_1.
$$
\end{proof}
\begin{lem}\label{ES}
The expected vanishing order
$$
S_{-K_{S,\Delta_\beta}}\left(E_i\right)=1+\frac{1}{\vol\left(-K_{S,\Delta_\beta}\right)}\left(-\left(n-2\right)\beta_1^2+\left(n-m\right)\beta_2^2+\frac{n\left(n-2\right)\beta_1^3+\left(n-m\right)^2\beta_2^3}{3}\right).
$$
\end{lem}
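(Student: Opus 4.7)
The proof is a direct computation from the definition of $S_{-K_{S,\Delta_\beta}}(E_i)$, using the piecewise volume formulas already established in Lemmas \ref{ngm}, \ref{nlm1g2}, \ref{1l2ng1}, and \ref{ne0}. Namely, I would evaluate
$$
S_{-K_{S,\Delta_\beta}}(E_i) = \frac{1}{\vol(-K_{S,\Delta_\beta})} \int_0^{\tau(-K_{S,\Delta_\beta},\,E_i)} \vol\bigl(-K_{S,\Delta_\beta} - xE_i\bigr)\,dx,
$$
where $\vol(-K_{S,\Delta_\beta}) = 4(\beta_1+\beta_2) - n\beta_1^2 + (n-m)\beta_2^2$.

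The plan is to split into the four mutually exhaustive cases covered by those lemmas: (i) $n \geq m$; (ii) $n < m$ with $(n-1)\beta_1 \geq (m-n)\beta_2$; (iii) $n \geq 1$ with $(n-1)\beta_1 < (m-n)\beta_2$; and (iv) $n = 0$. In each case, $\vol(-K_{S,\Delta_\beta} - xE_i)$ is a piecewise quadratic polynomial in $x$ with explicit breakpoints, so on each subinterval the integral is elementary. After summing the partial integrals and dividing by $\vol(-K_{S,\Delta_\beta})$, I would verify that the expression collapses to the claimed formula
$$
1 + \frac{1}{\vol(-K_{S,\Delta_\beta})}\left(-(n-2)\beta_1^2 + (n-m)\beta_2^2 + \frac{n(n-2)\beta_1^3 + (n-m)^2\beta_2^3}{3}\right).
$$
A useful sanity check along the way is that $A_{S,\Delta_\beta}(E_i)=1$ (since $E_i$ does not appear in $\Delta_\beta$), so the leading $1$ in the formula is just $A_{S,\Delta_\beta}(E_i)$ and the correction measures how far $S$ is from $A$.

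The main obstacle is the bookkeeping across the four cases: the denominators $n-1$, $m-n$, and $m-1$ appear in several pieces of the volume function, and one must check that all telescoping cancellations produce the clean expression above, with no residual dependence on the particular subdivision. Cases (ii) and (iii) are especially delicate because they share the endpoint $\tau = 2$ but swap the order of the internal breakpoints $2 - (n-1)\beta_1$ and $2 - (m-n)\beta_2$; the integrals on the two inner pieces differ individually but their sum must agree, and this is where most of the algebraic care is needed. Case (iv) also deserves a separate verification: here $\tau$ is $2+\beta_1$ rather than $2$, and the final piece $(2+\beta_1-x)^2$ contributes $\beta_1^3/3$, which matches precisely the $n=0$ specialization of $\frac{n(n-2)\beta_1^3 + (n-m)^2\beta_2^3}{3}$ combined with the other pieces.
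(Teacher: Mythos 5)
Your proposal matches the paper's proof exactly: the paper likewise computes $S_{-K_{S,\Delta_\beta}}(E_i)$ by integrating the piecewise volume formulas from Lemmas \ref{ngm}, \ref{nlm1g2}, \ref{1l2ng1}, and \ref{ne0} over $[0,\tau]$ and dividing by $\vol(-K_{S,\Delta_\beta})$, arriving at the stated closed form. Your remarks on the case bookkeeping and the $n=0$ specialization are sound sanity checks, but the underlying argument is identical to the paper's.
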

\begin{proof}
By Lemmas \ref{ngm}, \ref{nlm1g2}, \ref{1l2ng1} and \ref{ne0},
$$\begin{aligned}
S_{-K_{S,\Delta_\beta}}\left(E_i\right)&=\frac{1}{\vol\left(-K_{S,\Delta_\beta}\right)}\int_0^{\tau\left(-K_{S,\Delta_\beta},E_i\right)}\vol\left(-K_{S,\Delta_\beta}-xE_i\right)dx\\
&=1+\frac{1}{\vol\left(-K_{S,\Delta_\beta}\right)}\left(-\left(n-2\right)\beta_1^2+\left(n-m\right)\beta_2^2+\frac{n\left(n-2\right)\beta_1^3+\left(n-m\right)^2\beta_2^3}{3}\right).
\end{aligned}$$
\end{proof}
\begin{thm}\label{E}
Assume
$\beta_1^2-\beta_2^2-\frac{1}{3}\left(n\beta_1^3+(n-m)\beta_2^3\right)=0$.
If $m=1$, then $\delta(S,\Delta_\beta)<1$.
\end{thm}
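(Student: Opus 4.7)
The plan is to test the $\delta$-invariant against the single exceptional divisor $E_1$, which exists precisely because $m=1$, and show that the ratio $A_{S,\Delta_\beta}(E_1)/S_{-K_{S,\Delta_\beta}}(E_1)$ is strictly less than $1$ under the given cubic relation between $\beta_1$ and $\beta_2$.

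First I compute the log discrepancy: since $E_1$ is a prime divisor already living on $S$ and does not appear in $\Delta_\beta=(1-\beta_1)\wt{C}_1+(1-\beta_2)\wt{C}_2$, formula \eqref{Adef} applied with $f=\mathrm{id}_S$ gives $A_{S,\Delta_\beta}(E_1)=1-\ord_{E_1}\Delta_\beta=1$. Thus it suffices to show $S_{-K_{S,\Delta_\beta}}(E_1)>1$, which by Lemma \ref{ES} reduces to proving the strict positivity of
\[
\Theta:=-(n-2)\beta_1^2+(n-1)\beta_2^2+\frac{n(n-2)\beta_1^3+(n-1)^2\beta_2^3}{3},
\]
obtained by specializing Lemma \ref{ES} to $m=1$.

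Next I use the hypothesis $\beta_1^2-\beta_2^2=\tfrac{1}{3}(n\beta_1^3+(n-1)\beta_2^3)$ to eliminate $\beta_2^2$ from $\Theta$. Writing $(n-1)\beta_2^2=(n-1)\beta_1^2-\tfrac{n-1}{3}(n\beta_1^3+(n-1)\beta_2^3)$ and combining with $-(n-2)\beta_1^2$ produces $\beta_1^2-\tfrac{1}{3}(n(n-1)\beta_1^3+(n-1)^2\beta_2^3)$; the $(n-1)^2\beta_2^3$ terms then cancel against those in $\Theta$, leaving the clean identity
\[
\Theta=\beta_1^2-\frac{n\beta_1^3}{3}=\beta_1^2\Bigl(1-\frac{n\beta_1}{3}\Bigr).
\]
The ampleness bound $\beta_1<2/n$ from \eqref{ampconc} (vacuous when $n=0$) immediately yields $\Theta>0$.

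Finally, strict positivity of $\Theta$ forces $S_{-K_{S,\Delta_\beta}}(E_1)>1$, hence $A_{S,\Delta_\beta}(E_1)/S_{-K_{S,\Delta_\beta}}(E_1)<1$, and Definition \ref{ddfn} gives $\delta(S,\Delta_\beta)<1$. The only step requiring some care is the algebraic cancellation producing $\Theta=\beta_1^2(1-n\beta_1/3)$, which is the key reason the single exceptional divisor destabilizes precisely when $m=1$; for $m\geq 2$ the analogous substitution would introduce new positive $\beta_2^3$-type defects preventing such an elementary sign conclusion.
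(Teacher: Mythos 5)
Your proof is correct and follows essentially the same route as the paper: both test the divisor $E_1$, note $A_{S,\Delta_\beta}(E_1)=1$, and use the hypothesis to reduce the correction term in Lemma \ref{ES} to $(2-m)\beta_1^2(1-\tfrac{n}{3}\beta_1)$, which is strictly positive for $m=1$ by \eqref{ampconc}. One small inaccuracy in your closing commentary: for $m\geq 2$ the identical cancellation still occurs, yielding $(2-m)\beta_1^2(1-\tfrac{n}{3}\beta_1)\leq 0$ rather than leaving uncancelled $\beta_2^3$ terms; it is the sign of the prefactor $2-m$, not a failure of cancellation, that makes $E_i$ non-destabilizing when $m\geq 2$.
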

\begin{proof}
By \eqref{Adef},
$A_{S,\Delta_\beta}(E_i)=1-\ord_{E_i}\Delta_\beta=1$. Recall Lemma
\ref{ES},
$$
S_{-K_{S,\Delta_\beta}}\left(E_i\right)=1+\frac{1}{\vol\left(-K_{S,\Delta_\beta}\right)}\left(2-m\right)\beta_1^2\left(1-\frac{n}{3}\beta_1\right).
$$
By definition \ref{ddfn}, when $m=1$,
$$
\delta\left(S,\Delta_\beta\right)\leq\frac{A_{S,\Delta_\beta}\left(E_i\right)}{S_{-K_{S,\Delta_\beta}}\left(E_i\right)}<1.
$$
\end{proof}

Theorems \ref{C12} and \ref{E} combined imply the necessary part of
the main theorem.
\begin{thm}\label{nec}
Consider $\mathrm{(II.6A.n.m)}$ as in Definition \ref{setupdef}. We
have
$$
\delta\left(S,\left(1-\beta_1\right)\wt{C}_1+\left(1-\beta_2\right)\wt{C}_2\right)\leq1,
$$
with equality only if
$$
\beta_1^2+\frac{1}{3}\wt{C}_1^2\beta_1^3=\beta_2^2+\frac{1}{3}\wt{C}_2^2\beta_2^3
$$
and
$$
m\neq1.
$$
\end{thm}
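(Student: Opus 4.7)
The plan is to deduce Theorem \ref{nec} as a direct synthesis of Theorems \ref{C12} and \ref{E} proved above. The main substantive step is translating the intrinsic condition \eqref{equalcon}, phrased in terms of $\wt{C}_1^2$ and $\wt{C}_2^2$, into the explicit polynomial identity in $\beta_1, \beta_2, n, m$ appearing in the hypotheses of those two theorems.

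From the linear equivalences $\wt{C}_1 \sim \pi^*(C_2 - nF)$ and $\wt{C}_2 \sim \pi^*C_2 - \sum_i E_i$ recorded in Section \ref{The}, together with the intersection matrix of $\pi^*C_2$, $\pi^*F$, and the $E_i$, one reads off
$$\wt{C}_1^2 = -n, \qquad \wt{C}_2^2 = n - m.$$
Substituting these values into \eqref{equalcon}, the equality condition becomes $\beta_1^2 - \tfrac{n}{3}\beta_1^3 = \beta_2^2 + \tfrac{n-m}{3}\beta_2^3$, which is precisely the vanishing of the quantity $\beta_1^2 - \beta_2^2 + \tfrac{1}{3}\bigl(-n\beta_1^3 + (m-n)\beta_2^3\bigr)$ appearing in Theorem \ref{C12}. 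With this identification in hand, Theorem \ref{C12} gives $\delta(S, \Delta_\beta) \leq 1$ unconditionally, and strict inequality whenever \eqref{equalcon} fails; this already settles the stated inequality and half of the ``only if'' clause. To secure the additional $m \neq 1$ restriction, I would then invoke Theorem \ref{E}, which asserts that under \eqref{equalcon} the test divisor $E_i$ forces $\delta(S, \Delta_\beta) < 1$ whenever $m = 1$. Conjoining the two, the equality $\delta = 1$ requires both \eqref{equalcon} and $m \neq 1$, which is the claim.

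There is no genuine obstacle here: all of the technical content, namely the pseudo-effective thresholds for $\wt{C}_1, \wt{C}_2, E_i$, the case-by-case Zariski decompositions carried out in Lemmas \ref{ngm}--\ref{ne0}, and the resulting expected vanishing orders, has already been completed in the preceding sections. The only observation needed at this final step is the bookkeeping remark that $\wt{C}_1^2 = -n$ and $\wt{C}_2^2 = n - m$ precisely recover the coefficients that surfaced naturally inside the $\delta$-upper-bound calculations, so that the apparent coincidence between the Rubinstein--Zhang relation \eqref{rela} and the new relation \eqref{equalcon} receives a conceptual explanation in terms of the intrinsic geometry of the boundary components.
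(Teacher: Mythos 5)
Your proposal is correct and matches the paper's own proof, which likewise deduces Theorem \ref{nec} directly by combining Theorems \ref{C12} and \ref{E}. Your explicit verification that $\wt{C}_1^2 = -n$ and $\wt{C}_2^2 = n - m$ (so that the intrinsic condition \eqref{equalcon} matches the polynomial identity in Theorems \ref{C12} and \ref{E}) is a useful bookkeeping step that the paper leaves implicit.
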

\begin{proof}
By Theorem \ref{C12},
$$
\delta\left(S,\Delta_\beta\right)\leq1.
$$
If equality holds, by Theorem \ref{C12},
$$
\beta_1^2+\frac{1}{3}\wt{C}_1^2\beta_1^3=\beta_2^2+\frac{1}{3}\wt{C}_2^2\beta_1^3;
$$
by Theorem \ref{E},
$$
m\neq1.
$$
\end{proof}

\section{Rational T-varieties of complexity 1}\label{Tva}

In this section we collect some definitions and properties of
rational T-varieties of complexity 1 that will be used in the proof
of the main theorem.

\subsection{Construction of rational T-varieties of complexity 1}

We first recall some definitions for T-varieties \cite[Sections 2.2
and 2.3]{AIP}, in particular, rational T-varieties of complexity 1
\cite[Section 1]{Zha}.
\begin{dfn}
Let $N$ be a lattice and $M$ its dual lattice. We say
$\Delta\subseteq N_\Q:=N\otimes_\Z\Q$ is a \textit{polyhedron} if it
is a finite intersection of half-spaces. Its \textit{tailcone} is
defined by
$$
\tail\left(\Delta\right):=\left\{a\in
N_\Q\,\middle|\,a+\Delta\subseteq\Delta\right\}.
$$
A cone is always assumed to be \textit{pointed}, i.e., it does not
contain non-trivial linear subspaces. Fix a cone $\sigma$, a formal
sum
$$
\D=\sum_{p\in\PP^1}\D_p\otimes p
$$
is called a \textit{polyhedral divisor} on $\PP^1$ with
$\tail(\D):=\sigma$ if each coefficient $\D_p$ is either a
polyhedron with $\tail(\D_p)=\sigma$, or the empty set, and all but
finitely many coefficients are $\sigma$. Define its \textit{locus}
$$
\Loc{\D}:=\PP^1\setminus\bigcup_{\D_p=\varnothing}p
$$
and \textit{degree}
$$
\deg\D:=\sum_{p\in\PP^1}\D_p.
$$
It is called a \textit{p-divisor} if $\deg\D\subsetneqq\sigma$. A
ray $\rho$ of $\sigma$ is called \textit{extremal} if
$\rho\cap\deg\D=\varnothing$. For $u\in\sigma^\vee\cap M$, the
evaluation
$$
\D\left(u\right):=\sum_{p\in\Loc\D}\min_{v\in\D_p}\left\langle
u,v\right\rangle\cdot p
$$
is a $\Q$-divisor on $\Loc\D$. Consider the graded
$\OO_{\Loc\D}$-algebra
$$
\A\left(\D\right):=\bigoplus_{u\in\sigma^\vee\cap
M}\OO_{\Loc\D}\left(\D\left(u\right)\right)\chi^u.
$$
We can define two T-varieties
$$\begin{aligned}
\TV\left(\D\right)&:=\Spec\A\left(\D\right),&\tv\left(\D\right)&:=\spec
H^0\left(\Loc\D,\A\left(\D\right)\right),
\end{aligned}$$
equipped with an action of $T:=\spec\C[M]$ induced by the action of
$\C[M]$ on $\A(\D)$.
\end{dfn}

\begin{thm}
\begin{rm}\cite[Proposition 3.13]{PS}, \cite[Section 1]{Zha}.\end{rm} There are two types of $T$-invariant prime divisors on $\TV(D)$: vertical and horizontal divisors. Vertical divisors are classified by pairs $(p,v)$, where $v$ is a vertex of $\D_p$. Such divisor is denoted $D_{p,v}$. Horizontal divisors are classified by rays $\rho$ of $\sigma$. Such divisor is denoted $D_\rho$. The natural morphism $r:\TV(D)\to\tv(D)$ is a proper birational $T$-equivariant contraction morphism which contracts any horizontal divisor $D_\rho$ where $\rho$ is not extremal. The other divisors survive and are precisely the $T$-invariant prime divisors on $\tv(D)$.
\end{thm}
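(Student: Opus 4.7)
The plan is to prove the classification in three stages: first classify $T$-invariant valuations on $\C(\TV(\D))$, match them with $T$-invariant prime divisors on $\TV(\D)$, and finally decide which survive under the affinization $r$.

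I would begin by recording the structure of $\TV(\D)$ as a $T$-equivariant variety over $\Loc\D$: since $\A(\D)$ is a sheaf of $M$-graded $\OO_{\Loc\D}$-algebras, there is a natural $T$-equivariant affine morphism $\pi:\TV(\D)\to\Loc\D$. A $T$-invariant prime divisor $D$ must either dominate $\Loc\D$ under $\pi$ (horizontal) or map to a single closed point $p\in\Loc\D$ (vertical), since $T$ acts trivially on $\Loc\D$ and so each fibre of $\pi$ is $T$-stable. For the vertical case, I restrict to an affine neighborhood of $p$ and analyze $T$-invariant valuations $\nu$ that are positive on a uniformizer at $p$; such a $\nu$ is determined by its values on each weight space, and the standard polyhedral-divisor computation identifies $\nu$ with a pair $(p,v)$ where $v$ runs over the vertices of $\D_p$, via a formula of the form $\nu(f\chi^u)=\ord_p(f)+\langle u,v\rangle$ after normalization. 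For the horizontal case, $\nu$ is trivial on $\OO_{\Loc\D}$ and hence is defined on the generic fibre of $\pi$, which is the affine toric variety $\Spec\C[\sigma^\vee\cap M]$; by the toric dictionary its $T$-invariant valuations correspond to the rays $\rho$ of $\sigma$ with $\nu_\rho(\chi^u)=\langle u,n_\rho\rangle$.

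Next, the map $r:\TV(\D)\to\tv(\D)=\spec H^0(\Loc\D,\A(\D))$ is the canonical affinization, so it is automatically $T$-equivariant, proper, and birational (both varieties share a common dense $T$-orbit with the same function field). A $T$-invariant prime divisor $D$ is contracted by $r$ iff every $T$-invariant global section of $\OO_{\TV(\D)}$ is constant along $D$. Vertical divisors $D_{p,v}$ always survive, because for $u\in\sigma^\vee\cap M$ whose minimum on $\D_p$ is attained at $v$ the local sections of $\OO(\D(u))$ near $p$ vary transversally across $D_{p,v}$ and extend to global sections by choosing $u$ in a suitable sub-cone. For horizontal $D_\rho$, the question reduces to whether there exists $u\in\rho^\perp\cap\sigma^\vee\cap M$ such that the divisor $\D(u)$ on $\PP^1$ is globally effective with positive degree.

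The main obstacle is the convex-geometric step that identifies this with extremality of $\rho$. If $\rho\cap\deg\D=\varnothing$, a separation argument in $N_\Q$ provides $u\in\rho^\perp\cap\sigma^\vee$ with $\min_{w\in\deg\D}\langle u,w\rangle>0$, so $\D(u)$ has positive degree on $\PP^1$, and the corresponding global sections detect $D_\rho$, which therefore survives. Conversely, if $\rho$ meets $\deg\D$, every $u\in\rho^\perp\cap\sigma^\vee$ satisfies $\min_{w\in\deg\D}\langle u,w\rangle\leq 0$, so $\D(u)$ has non-positive degree and no nonconstant global section of weight $u$ can witness $D_\rho$, forcing $r$ to contract it. This separation/non-separation dichotomy is the crux of the theorem; the remaining properties (properness and birationality of $r$, and the conclusion that surviving invariant divisors exhaust the invariant prime divisors on $\tv(\D)$) follow from the general theory of affinization for varieties with finitely generated section algebras.
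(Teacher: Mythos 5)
This statement is quoted in the paper from \cite[Proposition 3.13]{PS} and \cite[Section 1]{Zha} and used as a black box; the paper contains no proof of it, so there is nothing internal to compare your argument against. On its own terms, your sketch follows the standard Altmann--Hausen/Petersen--S\"u\ss{} route --- fiber $\TV(\D)$ over $\Loc\D$ via the affine morphism $\pi$, split $T$-invariant prime divisors into vertical and horizontal according to their image in the base, classify each type by $T$-invariant valuations, and then decide which divisors the affinization $r$ contracts via the sign of $\deg\D(u)=\min_{w\in\deg\D}\langle u,w\rangle$ for $u\in\rho^\perp\cap\sigma^\vee$ --- and that is indeed the correct architecture.

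There are, however, genuine gaps. The most concrete is the claim that $r:\TV(\D)\to\tv(\D)$, being ``the canonical affinization,'' is ``automatically\ldots{} proper'': affinization morphisms are not proper in general ($\mathbb{A}^2\setminus\{0\}\to\spec\C[x,y]$ is the standard counterexample, and finite generation of the section ring does not help), and both properness and birationality of $r$ depend essentially on the positivity encoded in the p-divisor condition $\deg\D\subsetneqq\sigma$ --- equivalently, on the evaluations $\D(u)$ being semiample and big for $u$ in the relative interior of $\sigma^\vee$; indeed, if $\deg\D(u)<0$ for every nonzero $u$ then $H^0(\Loc\D,\A(\D))=\C$ and $\tv(\D)$ is a point. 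This is a theorem of Altmann--Hausen, not a formality, and it uses the same hypothesis that governs which $D_\rho$ are contracted. Second, the heart of the classification --- that the vertical invariant prime divisors over $p$ correspond exactly to the \emph{vertices} of $\D_p$, rather than to arbitrary $v\in N_\Q$, all of which define monomial valuations $f\chi^u\mapsto c\,\ord_p(f)+\langle u,v\rangle$ but generically have center of codimension at least two --- is exactly the content to be proved, and you assert it as ``the standard polyhedral-divisor computation.'' Finally, your contraction criterion (``every invariant global function is constant along $D$'') characterizes $r(D)$ being a point; since the statement covers arbitrary rank of $N$, the correct criterion is that $r(D)$ have codimension at least two, and the separation argument for extremal rays should be run against that, although the convex-geometric dichotomy you identify is the right mechanism.
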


Similar to the toric case, general T-varieties are obtained by
gluing affine T-varieties.

\begin{dfn}
\begin{rm}\cite[Definition 13]{AIP}, \cite[Definition 1.4]{Zha}.\end{rm}
For two p-divisors $\displaystyle\D=\sum_{p\in\PP^1}\D_p\otimes p$
and $\displaystyle\D'=\sum_{p\in\PP^1}\D'_p\otimes p$, define their
intersection
$$
\D\cap\D':=\sum_{p\in\PP^1}\left(\D_p\cap\D'_p\right)\otimes p.
$$
If $\D'\subseteq\D$, then there is an induced map
$\tv(\D')\to\tv(\D)$. If this map is an open embedding, then $\D'$
is called a \textit{face} of $\D$. A finite set $\cS=\{\D^i\}$ of
p-divisors on $\PP^1$ is called a \textit{divisorial fan} if the
intersection of two p-divisors from $\cS$ is a common face of both
and also belongs to $\cS$. For $p\in\PP^1$, define a \textit{slice}
$\cS_p:=\{\D_p^i\}$. $\cS$ is called \textit{complete} if each slice
is a complete polyhedral subdivision of $N_\Q$.
\end{dfn}

\begin{rmk}
\begin{rm}\cite[Section 4.4]{AIP}.\end{rm} In general, the slices do not capture the entire information of $\cS$, since the information of which polyhedra from different slices belong to the same p-divisor is lost. However, in complexity 1, with the additional information of the degree, this problem can be overcome, which leads to the definition of f-divisors.
\end{rmk}

\begin{dfn}
\begin{rm}\cite[Definition 19]{AIP}.\end{rm} Consider a pair $(\displaystyle\sum_{p\in\PP^1}\cS_p\otimes p,\mathfrak{deg})$ where each $\cS_p$ is a polyhedral subdivision with common tailfan $\Sigma$ and $\mathfrak{deg}\subseteq|\Sigma|$. For any full-dimensional $\sigma\in\tail(\cS)$ that intersects $\mathfrak{deg}$, let $\Delta_p^\sigma$ be the unique polyhedron in $\cS_p$ with tailcone $\sigma$, and $\displaystyle\D^\sigma:=\sum_{p\in\PP^1}\Delta_p^\sigma$. The pair $(\displaystyle\sum_{p\in\PP^1}\cS_p\otimes p,\mathfrak{deg})$ is called an f-divisor if each $\D^\sigma$ is a p-divisor with $\deg\D^\sigma=\mathfrak{deg}\cap\sigma$.
\end{dfn}

\begin{thm}
\begin{rm}\cite[Proposition 20]{AIP}.\end{rm} For any f-divisor $(\displaystyle\sum_{p\in\PP^1}\cS_p\otimes p,\mathfrak{deg})$, there is a divisorial fan $\cS$ with slices $\cS_p$ and degree $\mathfrak{deg}$. Moreover, the T-variety constructed from $\cS$ is independent of the choice of $\cS$.
\end{thm}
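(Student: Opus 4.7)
The plan is to establish existence by directly constructing a divisorial fan $\cS$ from the f-divisor data, and then to deduce independence from the observation that the resulting affine gluing is canonically determined. For each cone $\tau$ in the common tailfan $\Sigma$ of the slices, and each $p\in\PP^1$, I would let $\Delta_p^\tau$ be the unique polyhedron in $\cS_p$ with tailcone $\tau$ (for full-dimensional $\tau$ this is given in the f-divisor axiom, and for lower-dimensional $\tau$ it is the common face of those $\Delta_p^\sigma$ with $\sigma\supset\tau$; well-definedness follows because $\cS_p$ is a polyhedral subdivision with tailfan $\Sigma$). Then I would set $\D^\tau := \sum_{p\in\PP^1}\Delta_p^\tau\otimes p$, with $\Delta_p^\tau = \varnothing$ on those $p$ where the face construction yields no nonempty polyhedron, so that $\Loc\D^\tau$ is recorded correctly.

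Next I would verify that $\cS := \{\D^\tau : \tau\in\Sigma\}$ really is a divisorial fan. Each $\D^\tau$ is a p-divisor: for full-dimensional $\tau$ this is built into the f-divisor axiom, and for lower-dimensional $\tau$ it follows by restricting the degree data, since $\mathfrak{deg}\cap\tau$ is a strict subset of $\tau$ whenever $\tau$ is a proper face of some full-dimensional cone meeting $\mathfrak{deg}$. One has $\D^\tau \cap \D^{\tau'} = \D^{\tau\cap\tau'}$ slice-by-slice because $\Delta_p^\tau \cap \Delta_p^{\tau'}$ is the common face of those two polyhedra inside the subdivision $\cS_p$, and the induced morphism $\tv(\D^{\tau\cap\tau'}) \to \tv(\D^\tau)$ is an open embedding by the standard face criterion for p-divisors. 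For independence, I would observe that $\tv(\cS)$ is glued from $\{\tv(\D^\sigma)\}$ with $\sigma$ full-dimensional, that each $\tv(\D^\sigma) = \spec H^0(\Loc\D^\sigma, \A(\D^\sigma))$ depends only on $\{\Delta_p^\sigma\}_p$ and $\mathfrak{deg}\cap\sigma$, and that the gluing maps along $\tv(\D^{\sigma\cap\sigma'})$ are determined by the analogous slice/degree data on the face; hence any two divisorial fans refining the same f-divisor produce isomorphic $T$-varieties.

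The main obstacle will be the bookkeeping at tailcones $\tau$ that fail to meet $\mathfrak{deg}$. There some coefficients $\Delta_p^\tau$ are empty, so $\Loc\D^\tau \subsetneqq \PP^1$, and the extremal rays of $\sigma$ correspond to horizontal divisors contracted by the natural morphism $r\colon \TV(\D^\sigma)\to\tv(\D^\sigma)$. Verifying the face compatibilities and the open-embedding property under these contractions, and checking that no spurious identifications appear when passing between distinct full-dimensional $\sigma$'s that share such a degenerate face, is the genuinely combinatorial step that must be handled carefully; once this is settled the remainder of the argument is essentially formal.
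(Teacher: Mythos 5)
The theorem is cited from \cite[Proposition 20]{AIP} without proof, so there is no internal argument in the paper to compare against; let me assess your proposal directly. The construction of $\D^\tau$ is ill-defined precisely where it matters. For a full-dimensional cone $\sigma\in\Sigma$, the f-divisor axiom does give a unique polyhedron $\Delta_p^\sigma\in\cS_p$ with tailcone $\sigma$. But for lower-dimensional $\tau$, a polyhedral subdivision with tailfan $\Sigma$ generally contains several polyhedra with tailcone $\tau$: in the paper's running example, the bounded cell $[-1,0]$ and each vertex of $\cS_i$ all have tailcone $\{0\}$, so ``the unique polyhedron with tailcone $\tau$'' does not exist. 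Your patch of taking the common face of the $\Delta_p^\sigma$ with $\sigma\supset\tau$ fails twice over: the intersection can simply be empty at a given $p$ (e.g.\ $(-\infty,-1]\cap[0,+\infty)=\varnothing$ at $p_i$), and, more fundamentally, it never produces the bounded cells of the slices, which must occur as coefficients of some p-divisor in $\cS$ for the glued variety to be complete. Consequently $\{\D^\tau\}_{\tau\in\Sigma}$ is too small to be the required divisorial fan, and the existence half of the claim is not established.

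The deeper gap is that the ``Moreover'' clause --- independence of $\tv(\cS)$ from the choice of $\cS$ --- is the real content of the proposition, and your argument only restates it. Saying the gluing maps ``are determined by the analogous slice/degree data on the face'' presupposes exactly what must be proved, namely that two admissible choices of p-divisors refining the same slices (differing, say, in how bounded cells are grouped across slices into p-divisors) produce $T$-equivariantly isomorphic gluings. The missing ingredient is that any two such divisorial fans admit a common refinement, together with invariance of $\tv(\cS)$ under refinement. You flag this yourself as ``the genuinely combinatorial step,'' but acknowledging the gap is not the same as closing it.
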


Let $X$ be a rational T-variety of complexity 1, characterized by a
complete divisorial fan $\cS$. By \cite[Section 3.1]{PS}, a
T-invariant Cartier divisor is uniquely characterized by its
divisorial support function $\displaystyle
h=\sum_{p\in\PP^1}h_p\otimes p$, where each $h_p$ is defined on the
slice $\cS_p$ and is affine on each polyhedron. Moreover, the linear
part $h_p$ on a polyhedron depends only on its tailcone but not on
$p$. The corresponding Weil divisor is given by the following
theorem.
\begin{thm}\label{divsupp}
\begin{rm}\cite[Corollary 3.19]{PS}.\end{rm} The Weil divisor corresponding to the divisorial support function $h$ is given by
$$
-\sum_\rho
h_t\left(n_\rho\right)D_\rho-\sum_{\left(p,v\right)}\mu\left(v\right)h_p\left(v\right)D_{p,v},
$$
where $h_t$ is the linear part of $h$, $n_\rho$ is the primitive
generator of the extremal ray $\rho$, and $\mu(v)$ is the smallest
natural number such that $\mu(v)v$ is a lattice point.
\end{thm}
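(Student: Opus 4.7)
The plan is to reduce the statement to a local computation of the divisor of a $T$-eigenfunction and then patch the results along the divisorial fan. First I would cover $X=\TV(\cS)$ by its $T$-invariant affine opens $\TV(\D)=\Spec\A(\D)$ for $\D\in\cS$. Since $\A(\D)=\bigoplus_{u\in\sigma^\vee\cap M}\OO_{\Loc\D}(\D(u))\chi^u$ is $M$-graded, any $T$-invariant Cartier divisor on $\TV(\D)$ is $T$-linearizable and hence locally principal, cut out by $\chi^{-u_\sigma}\pi^*f_\sigma^{-1}$ for some weight $u_\sigma\in M$ and some rational function $f_\sigma$ on $\Loc\D$. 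The support function $h$ is precisely the bookkeeping device for this local data: the linear part $h_t$ restricted to the tailcone $\sigma$ equals $\langle u_\sigma,\cdot\rangle$, while the affine correction $h_p(v)-\langle u_\sigma,v\rangle$ records $\ord_p(f_\sigma)$.

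The heart of the argument is then the formula
\begin{equation*}
\mathrm{div}(\chi^u)=\sum_\rho\langle u,n_\rho\rangle D_\rho+\sum_{(p,v)}\mu(v)\langle u,v\rangle D_{p,v}.
\end{equation*}
The horizontal part follows directly from the $M$-grading: the prime ideal defining $D_\rho$ in $\A(\D)$ is $\bigoplus_{\langle u',n_\rho\rangle>0}\OO_{\Loc\D}(\D(u'))\chi^{u'}$, so $\ord_{D_\rho}\chi^u=\langle u,n_\rho\rangle=h_t(n_\rho)$. For the vertical part I would identify the scheme-theoretic fiber $\pi^{-1}(p)\subseteq\TV(\D)$ with a toric scheme built from the slice $\cS_p$, whose torus-invariant components are the $D_{p,v}$ indexed by vertices $v$ of $\D_p$. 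The factor $\mu(v)$ appears because $v$ need not itself be a lattice point; the primitive lattice generator of $\Q_{\geq0}v$ is $\mu(v)v$, and its toric valuation pairs with $u$ via $\mu(v)\langle u,v\rangle=\mu(v)h_p(v)$.

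The Cartier-to-Weil conversion introduces the minus sign: by convention the Cartier divisor is locally cut out by $\chi^{-u_\sigma}$, so the associated Weil divisor is $-\,\mathrm{div}(\chi^{u_\sigma}\pi^*f_\sigma)$, which yields the stated expression $-\sum_\rho h_t(n_\rho)D_\rho-\sum_{(p,v)}\mu(v)h_p(v)D_{p,v}$. Well-definedness on overlapping charts is automatic because $h$ is a single globally defined piecewise-affine object on $\cS$, so the local weights $u_\sigma$ and functions $f_\sigma$ satisfy the compatibility demanded by the gluing axioms. The main obstacle I anticipate is the vertical contribution: one must argue that the fiber over $p$ genuinely decomposes as a toric scheme whose components $D_{p,v}$ appear with multiplicity $\mu(v)$, which requires careful normalization of the generator along each ray through $v$. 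Once this toric computation is in hand, the horizontal case and the patching reduce essentially to the standard toric formula $D=-\sum_\rho\varphi(n_\rho)D_\rho$ for a toric divisor with support function $\varphi$.
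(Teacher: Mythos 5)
The paper does not actually prove this statement: it is a citation of \cite[Corollary 3.19]{PS}, quoted as background, so there is nothing in the paper itself to compare against. Your proposal is, in outline, the argument Petersen--S\"u\ss{} give: first compute $\mathrm{div}(f\chi^u)$ on an affine chart $\tv(\D)$ (their Proposition 3.14), then patch the local principal data $(\chi^{-u_\sigma},f_\sigma^{-1})$ along the divisorial fan using the compatibility built into the support function. The horizontal computation $\ord_{D_\rho}\chi^u=\langle u,n_\rho\rangle$ and the identification of $h_p(v)-\langle u_\sigma,v\rangle$ with $\ord_p(f_\sigma)$ are both on the mark, and the sign handling in the Cartier-to-Weil step is the standard one.

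There is, however, a real imprecision at the step you yourself flag as the obstacle. You write that ``the primitive lattice generator of $\Q_{\ge0}v$ is $\mu(v)v$,'' but this is not what $\mu(v)$ is. For $v\in N_\Q$, the primitive generator of the ray $\Q_{\ge0}v$ in $N$ need have nothing to do with $\mu(v)$ (take $N=\Z$ and $v=2/3$: $\mu(v)=3$ and $\mu(v)v=2$, while the primitive generator of $\Q_{\ge0}$ is $1$). The correct statement is that in the extended lattice $N\times\Z$, the ray through $(v,1)$ has primitive generator $\mu(v)\cdot(v,1)=(\mu(v)v,\mu(v))$; the valuation $\nu_{p,v}$ is the toric valuation of this primitive vector in the downgraded/homogenized model over $\Loc\D$, giving $\nu_{p,v}(\chi^u)=\mu(v)\langle u,v\rangle$ and $\nu_{p,v}(\pi^*t_p)=\mu(v)$ for a local parameter $t_p$ at $p$. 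The multiplicity-$\mu(v)$ appearance of $D_{p,v}$ in the fiber $\pi^{-1}(p)$ is then a consequence of these two identities, not an independent toric fact one can invoke. Without setting this up correctly, the $\mu(v)$ factor in the vertical contribution is exactly the part of the formula your sketch does not actually establish. You also wrote the divisor formula only for a bare character $\chi^u$; to finish, you must add the $\sum_{(p,v)}\mu(v)\ord_p(f_\sigma)D_{p,v}$ term before taking the negative, which is implicit in your description of $h_p(v)$ but never assembled. Finally, note that in $\tv(\D)$ only the \emph{extremal} rays of $\sigma$ give rise to horizontal divisors (the others are contracted by $r:\TV(\D)\to\tv(\D)$), so the horizontal sum should be restricted accordingly, matching the statement's ``extremal ray.''
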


\begin{thm}\label{anticantvar}
\begin{rm}\cite[Theorem 3.21]{PS}.\end{rm} Fix an anti-canonical divisor $\displaystyle-K_{\PP^1}=\sum_{p\in\PP^1}a_p\cdot p$. Then the anti-canonical divisor
$$
-K_X=\sum_\rho
D_\rho+\sum_{\left(p,v\right)}\left(\mu\left(v\right)a_p+1-\mu\left(v\right)\right)D_{p,v}.
$$
\end{thm}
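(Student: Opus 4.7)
The plan is to realize $K_X$ as the divisor of an explicit $T$-invariant rational top-degree form $\omega$ on $X$, then compute $\ord_D\omega$ for each $T$-invariant prime divisor $D$ and read off coefficients. By $T$-invariance of $\omega$, the divisor $\mathrm{div}(\omega)$ is $T$-invariant, hence by the classification recalled earlier in the section it is supported only on the horizontal $D_\rho$ and vertical $D_{p,v}$. Concretely, fix a $\Z$-basis $m_1,\ldots,m_t$ of $M$ with $t=\mathrm{rk}\,M=\dim X-1$, and set $\omega_T:=\bigwedge_{i=1}^t d\chi^{m_i}/\chi^{m_i}$, the translation-invariant volume form on $T=\spec\C[M]$. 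Pick a rational $1$-form $\eta$ on $\PP^1$ with $\mathrm{div}(\eta)=K_{\PP^1}$, i.e.\ $\ord_p\eta=-a_p$ for all $p$. The open $T$-orbit of $X$ is $T$-equivariantly $T\times U$ for a dense open $U\subseteq\PP^1$; on this orbit define $\omega:=\omega_T\wedge\eta$, and extend as a rational top-form on $X$, so $K_X=\mathrm{div}(\omega)$.

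For a horizontal prime $D_\rho$, a transverse toric chart in the $T$-direction reduces the question to the standard toric formula, which gives $\ord_{D_\rho}\omega_T=-1$. The form $\eta$, pulled back via the rational quotient $X\dashrightarrow\PP^1$, is regular and nonvanishing at a generic point of $D_\rho$, since $D_\rho$ is dominant over $\PP^1\setminus\Supp K_{\PP^1}$. Hence $\ord_{D_\rho}\omega=-1$, contributing coefficient $+1$ to $-K_X=-\mathrm{div}(\omega)$, which matches the first sum in the stated formula.

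For a vertical prime $D_{p,v}$ the computation is local at its generic point. The key structural input is that, \'etale locally around such a point, $X$ decomposes into a transverse toric chart together with a base-direction along $\PP^1$, and the rational quotient $X\dashrightarrow\PP^1$ is ramified along $D_{p,v}$ with index precisely $\mu(v)$; in local uniformizers $s$ for $D_{p,v}$ and $z$ for $p\in\PP^1$ this says $z=s^{\mu(v)}\cdot(\text{unit})$. Pulling back $\eta=z^{-a_p}\cdot(\text{unit})\,dz$ yields a factor of order $-\mu(v)a_p+(\mu(v)-1)$ along $D_{p,v}$, while $\omega_T$ is regular and nonvanishing transverse to $D_{p,v}$ (the transverse direction is the ramified vertical one, not a horizontal toric direction). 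Summing gives
\[
\ord_{D_{p,v}}\omega = -\mu(v)a_p+\mu(v)-1,
\]
so $D_{p,v}$ contributes coefficient $\mu(v)a_p+1-\mu(v)$ to $-K_X$, as claimed.

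The main obstacle will be the vertical case, specifically justifying the local model and identifying the ramification index of the rational quotient along $D_{p,v}$ as $\mu(v)$. The cleanest way to set this up is to work inside an affine patch $\tv(\D^\sigma)$ of the f-divisor containing $D_{p,v}$, where $v$ is a vertex of the slice $\D_p^\sigma$: the least positive integer $\mu(v)$ with $\mu(v)v\in N$ simultaneously controls the multiplicity of the $T$-orbit over $D_{p,v}$ and the ramification index of the projection to $\PP^1$ along $D_{p,v}$, via the semi-invariant sections of $\A(\D^\sigma)$. Once this local model and the ramification-index computation are pinned down, the horizontal and vertical order computations are direct, and assembling them recovers the formula.
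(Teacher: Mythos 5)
The paper does not prove this statement; it is quoted verbatim from Petersen--S\"u\ss\ \cite[Theorem 3.21]{PS}. Your strategy --- realize $K_X$ as the divisor of the $T$-invariant rational top form $\omega=\omega_T\wedge\eta$ and compute its order along each $T$-invariant prime divisor --- is the natural one and, I believe, essentially the one used there. The horizontal computation is correct.

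In the vertical case there is a genuine issue beyond the one you flag at the end. Even granting the \'etale-local model $X\cong T\times\mathbb{A}^1_a$ near a general point of $D_{p,v}$, with $D_{p,v}=\{a=0\}$ and $z=a^{\mu(v)}\cdot(\text{unit})$, the intrinsically defined form $\omega_T=\bigwedge_i d\chi^{m_i}/\chi^{m_i}$ does \emph{not} pull back to a form that is regular and nonvanishing along $D_{p,v}$, contrary to your claim. On the $A_{\mu-1}$ model $\{zw=x^\mu\}$ with $T=\C^*$, $t\cdot(z,x,w)=(z,tx,t^\mu w)$, and \'etale cover $\mathbb{A}^2_{(a,b)}$ with $z=a^\mu$, $x=ab$, $w=b^\mu$, one finds $\omega_T=dx/x=da/a+db/b$, so $\ord_{D_{p,v}}(\omega_T)=-1$, not $0$. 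The formula nonetheless comes out right because the polar $da/a$-part of $\omega_T$ is annihilated when wedged with $\pi^*\eta$, which is a multiple of $da$; equivalently, $\ord(\omega_T\wedge\pi^*\eta)$ strictly exceeds $\ord(\omega_T)+\ord(\pi^*\eta)$, so the additivity your accounting tacitly relies on fails, and the correct total arises from a cancellation you do not make explicit. This is precisely the kind of thing the local model is supposed to make transparent, so it needs to be said.

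One way to close the gap that sidesteps the local product model entirely: choose a $\Z$-basis $m_1,\dots,m_t$ of $M$ whose dual basis satisfies $n_1=\mu(v)v$. Then $s:=\chi^{m_1}$ is a uniformizer at the generic point $\xi$ of $D_{p,v}$, the $\chi^{m_i}$ for $i\geq 2$ are units at $\xi$, and $z=s^{\mu(v)}u$ for a unit $u\in\OO_{X,\xi}^\times$; this also yields the ramification-index claim. Completing $(s,\chi^{m_2},\dots,\chi^{m_t})$ to a local coordinate system with one further coordinate $w$ near a general point of $D_{p,v}$, the coefficient of $\omega$ in the frame $ds\wedge d\chi^{m_2}\wedge\cdots\wedge d\chi^{m_t}\wedge dw$ equals $(s\prod_{i\geq2}\chi^{m_i})^{-1}$ times the $dw$-coefficient of $\pi^*\eta$, and that coefficient has $s$-order $\mu(v)-\mu(v)a_p+\ord_s(\partial u/\partial w)$. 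Finally $\ord_s(\partial u/\partial w)=0$: the residue $u|_{D_{p,v}}$ is a nonzero $T$-semi-invariant rational function on $D_{p,v}$ of weight $-\mu(v)m_1\neq 0$, and a short weight comparison shows it is transcendental over $\C(\chi^{m_2},\dots,\chi^{m_t})$ (whose semi-invariants have weights in $\Z m_2+\cdots+\Z m_t$), so its $w$-derivative is generically nonzero. Adding up gives $\ord_{D_{p,v}}(\omega)=-\mu(v)a_p+\mu(v)-1$, matching the asserted coefficient $\mu(v)a_p+1-\mu(v)$ of $D_{p,v}$ in $-K_X$.
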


\begin{dfn}\label{defdual}
\begin{rm}\cite[Definition 3.22]{PS}.\end{rm} For a divisorial support function $h$ with linear part $h_t$, define
$$
\square_h:=\bigcap_\rho\left\{\left\langle u,n\rho\right\rangle\geq
h_t\left(n_\rho\right)\right\},
$$
where $\rho$ runs through all rays in $\tail(\cS)$. The
\textit{dual} function of $h$ is defined to be
$$
\Psi:=\sum_{p\in\PP^1}\Psi_p\otimes p,
$$
where
$$
\Psi_p\left(u\right):=h_p^*\left(u\right)=\min_v\left(\left\langle
u,v\right\rangle-h_p\left(v\right)\right\}
$$
is the Legendre transform of $h_p$ on $\square_h$. Note that it
suffices to let $v$ run through all vertices in $\cS_p$. We also
define its degree
$$
\deg\Psi:=\sum_{p\in\PP^1}\Psi_p.
$$
\end{dfn}

\subsection{\texorpdfstring{$\delta$}{delta}-invariant and K-stability}

In this section we provide results relevant to the
$\delta$-invariant and K-stability.

\begin{thm}\label{Zhathm}
\begin{rm}\cite[Section 3]{Zha}, \cite[Theorem 3.15]{Sus}.\end{rm} Let $X$ be a rational Fano T-variety of complexity 1 with log terminal singularities, characterized by a complete divisorial fan $\cS$. Let $h$ be the divisorial support function corresponding to $-K_X$ specified by Theorem \ref{anticantvar}, and $\Psi$ its dual function. Define
\begin{equation}\label{defZha}\begin{aligned}
\vol\left(\Psi\right)&:=\int_{\square_h}\deg\Psi\left(u\right)du,\\
\bc\left(\Psi\right)&:=\frac{1}{\vol\left(\Psi\right)}\int_{\square_h}u\cdot\deg\Psi\left(u\right)du,\\
\bc_p&:=\frac{1}{\vol\left(\Psi\right)}\int_{\square_h}\left(\frac{1}{2}\deg\Psi\left(u\right)-\Psi_p\left(u\right)\right)\deg\Psi\left(u\right)du.
\end{aligned}\end{equation}
Then
$$
\delta\left(X\right)=\min\left\{\frac{-h_t\left(n_\rho\right)}{\left\langle\bc\left(\Psi\right),n_\rho\right\rangle-h_t\left(n_\rho\right)}\right\}_\rho\cup\left\{\frac{1}{\mu\left(v\right)\left(\bc_p+\left\langle\bc\left(\Psi\right),v\right\rangle-h_p\left(v\right)\right)}\right\}_{\left(p,v\right)},
$$
where each term corresponds to the valuation given by $D_\rho$ and
$D_{p,v}$.

The Futaki character vanishes precisely when $\bc(\Psi)=0$.
\end{thm}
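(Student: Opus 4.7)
The plan is to prove the formula by reducing the infimum in Definition \ref{ddfn} to $T$-invariant divisorial valuations, followed by a polyhedral computation of $A_X$ and $S_{-K_X}$.

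First I would invoke equivariant K-stability: on a variety with a torus action, the infimum defining $\delta(X)$ is attained on $T$-invariant divisorial valuations over $X$. For a complete complexity 1 T-variety $X = \tv(\cS)$, the classification of $T$-invariant prime divisors recalled above asserts that every such valuation is either a horizontal $D_\rho$ or a vertical $D_{p,v}$. A descent argument using minimality of the valuation under the birational contraction $r:\TV(\cS)\to\tv(\cS)$ then shows that valuations extracted by a further non-trivial modification cannot yield a strictly smaller ratio, so the infimum reduces to these two families.

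Next I would read off the log discrepancies from Theorems \ref{divsupp} and \ref{anticantvar}. The coefficient of $D_\rho$ in $-K_X$ is $-h_t(n_\rho)$; since $X$ is a log terminal Fano with trivial boundary this coincides with $A_X(D_\rho)$. The divisorial valuation attached to a vertex $(p,v)$ equals $\mu(v)\cdot\ord_{D_{p,v}}$ because $\mu(v)v$ is the primitive lattice representative, which produces the $\mu(v)$ factor appearing in the denominator.

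The main step is computing $S_{-K_X}(E)$ polyhedrally. Using the description of global sections of semi-ample $T$-linearized line bundles via lattice points of $\square_h$ weighted by $\deg\Psi(u)$, one obtains $\vol(-K_X) = n!\int_{\square_h}\deg\Psi(u)\,du$. The modification $-K_X - xE$ corresponds to subtracting $x$ times the support function of $E$ from $h$; for horizontal $E = D_\rho$ this cuts $\square_h$ by a moving half-space with normal $n_\rho$, while for vertical $E = D_{p,v}$ it alters only the slice contribution $\Psi_p$ at $p$. Plugging this into the integral definition of $S_{-K_X}$ and swapping the order of integration yields the linear functional of $\bc(\Psi)$ and the scalar $\bc_p$ from \eqref{defZha}. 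The Futaki character formula then follows in parallel from equivariant intersection theory: the Futaki pairing of a character $u$ against $-K_X$ is proportional to $\int_{\square_h} u\cdot\deg\Psi(u)\,du$, so it vanishes for every $u$ precisely when $\bc(\Psi) = 0$. The main obstacle I anticipate is the polyhedral volume identity itself, which requires a Riemann--Roch computation along the base $\PP^1$ combined with a Newton--Okounkov body argument for the slices $\cS_p$, together with careful bookkeeping of the Legendre-transform duality in Definition \ref{defdual} converting each piecewise-affine $h_p$ into the concave $\Psi_p$.
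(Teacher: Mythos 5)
The paper does not prove Theorem~\ref{Zhathm}; it is imported as a black box from \cite{Zha} and \cite{Sus} and used directly in Section~\ref{Prf}, so there is no internal argument of the paper to compare yours against. Evaluating your sketch on its own merits, the first paragraph contains a genuine gap. Reducing the infimum defining $\delta(X)$ to $T$-invariant divisorial valuations is the correct first step, and the dichotomy of $T$-invariant prime divisors on $X$ into horizontal $D_\rho$ and vertical $D_{p,v}$ is right. But the $T$-invariant divisorial valuations \emph{over} $X$ form a continuum, parameterized (up to scaling) by the tailfan together with pairs $(p,v)$ where $v$ ranges over all of $N_\Q$, not merely over the finitely many vertices of the slices $\cS_p$. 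Passing from this continuum to the finite minimum displayed in the theorem is not a ``descent'' along $r:\TV(\D)\to\tv(\D)$; that map is an internal contraction in the affine charts and has nothing to do with extracting divisors on modifications of $X$. What actually makes a finite formula possible is a convexity argument: on each cone of the tailfan, and on each polyhedron of each slice, both $A_X$ and $S_{-K_X}$ are affine in the valuation parameter, so their ratio is a fractional-affine function whose infimum is attained at the extreme points --- precisely the rays $\rho$ and the vertices $v$. Your proposal never invokes this, which is the crux of why the right-hand side of the theorem is a minimum over a finite set.

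The later paragraphs have the right ingredients (Theorems~\ref{divsupp} and \ref{anticantvar} for discrepancies, the polyhedral expressions of $\vol$ and $S$ through $\square_h$ and $\deg\Psi$, the Futaki character as the first moment $\int_{\square_h}u\cdot\deg\Psi(u)\,du$), but they are hand-waved, as you acknowledge. One clarification worth making precise: $A_X(D_\rho)=1$ automatically because $D_\rho$ is a prime divisor on the normal $\Q$-Gorenstein $X$, while $-h_t(n_\rho)=1$ follows separately because $h$ is chosen to represent $-K_X$, whose $D_\rho$-coefficient is $1$ by Theorem~\ref{anticantvar}; the two quantities agree, but one should not present $-h_t(n_\rho)$ as being $A_X(D_\rho)$ by definition (indeed in the log application in Section~\ref{Prf} the numerator becomes $\beta_i\neq 1$). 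The volume identity and the derivation of $\bc_p$ remain to be carried out, as you note.
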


\begin{thm}\label{Tvarthm}
\begin{rm}\cite[Theorem 1.31]{ACC}, \cite[Theorem 3.2]{Liu}.\end{rm} Let $(X,\Delta)$ be log Fano with Kawamata log terminal singularities. Let $T$ be the maximal torus in $\Aut(X,\Delta)$. Suppose $\dim T=\dim X-1$. Then $(X,\Delta)$ is log K-polystable if and only if the Futaki character vanishes, and for any $T$-invariant prime divisor $E$ on $X$,
$$
\frac{A_{X,\Delta}\left(E\right)}{S_{-K_{X,\Delta}}\left(E\right)}\geq1,
$$
with equality if and only if $E$ is horizontal.
\end{thm}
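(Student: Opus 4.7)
The plan is to combine the equivariant reduction of K-stability to torus-invariant test configurations with the combinatorial description of complexity-one $T$-varieties summarized in Theorem \ref{Zhathm}. First I would invoke equivariant K-stability (Zhuang, Li--Wang--Xu): for a log Fano pair $(X,\Delta)$ with torus action $T\subseteq\Aut(X,\Delta)$, log K-semistability and log K-polystability can both be tested using only $T$-equivariant special test configurations, and the $\delta$-invariant equals its $T$-equivariant analogue computed over $T$-invariant divisorial valuations. This restricts the infimum in Definition \ref{ddfn} to prime divisors $E$ that are $T$-invariant.

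Next I would use the complexity-one hypothesis $\dim T = \dim X - 1$ to identify these valuations combinatorially. Every $T$-equivariant birational modification of $X$ is again complexity one, with $T$-invariant prime divisors of two types: horizontal $D_\rho$ coming from rays of the common tailfan, and vertical $D_{p,v}$ coming from vertices of some slice $\cS_p$. The key subclaim is that it suffices to test $A/S$ on the $D_\rho$ and $D_{p,v}$ of the divisorial fan of $X$ itself: any other $T$-invariant divisorial valuation can be estimated, via the Legendre-transform formulas of Definition \ref{defdual} and Theorem \ref{Zhathm}, by one of these. This is a convexity statement on the evaluations $\D(u)$ for $u\in\square_h$ and is specific to complexity one.

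The last step is to separate semistability from polystability. K-semistability amounts to $A_{X,\Delta}(E)/S_{-K_{X,\Delta}}(E)\geq 1$ on both families. Polystability further requires that equality be realized only by product test configurations. Each horizontal ray $\rho$ generates a $\mathbb{G}_m$ commuting with $T$, hence a product test configuration whose Donaldson--Futaki invariant is proportional to $\langle\bc(\Psi),n_\rho\rangle - h_t(n_\rho)$; vanishing of the Futaki character ($\bc(\Psi)=0$) makes this vanish, permitting equality. Equality $A=S$ at a vertical $D_{p,v}$ on the other hand would produce a non-product $T$-equivariant special test configuration with zero Donaldson--Futaki, contradicting polystability, so strict inequality must hold at vertical divisors.

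I expect the main obstacle to be the reduction in the second step: rigorously showing that testing $A/S$ on the $T$-invariant prime divisors of $X$ itself already controls arbitrary $T$-invariant valuations over $X$. This rests on convex-analytic identities relating $S_{-K_{X,\Delta}}(E)$ to integrals of $\deg\Psi$ over $\square_h$, together with the fact that in complexity one a general $T$-invariant valuation is encoded by piecewise-affine data on $\square_h$ dominated by its values at vertices and extremal rays --- precisely where Theorem \ref{Zhathm} supplies explicit formulas.
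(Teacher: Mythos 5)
The paper does not prove this statement; it is quoted from \cite[Theorem 1.31]{ACC} and \cite[Theorem 3.2]{Liu}, so there is no in-paper argument to compare against and I can only assess the sketch on its own terms. Your outline --- equivariant reduction to $T$-invariant valuations, complexity-one combinatorics, then the Futaki/product dichotomy --- is the correct skeleton, and it is consistent in spirit with the route taken in those references.

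The two substantive ingredients, however, are still missing. First, and you flag this yourself: the passage from arbitrary $T$-invariant divisorial valuations over $X$ to the finitely many $D_\rho$ and $D_{p,v}$ on $X$ itself is a theorem, not a routine convexity remark. Moreover, for K-\emph{poly}stability (as opposed to semistability) the invariant that actually has to be controlled is the reduced, $T$-twisted stability threshold $\delta_T$, since $\Aut(X,\Delta)$ is positive-dimensional; it is $\delta_T>1$ that characterizes polystability, and the real content of the theorem is that in complexity one this unwinds to the stated conditions on the rays and vertices once one mods out the horizontal directions by the twisting. Your step 4 replaces that computation with a heuristic. Second, the assertion that a vertical $D_{p,v}$ with $A=S$ necessarily induces a \emph{non}-product $T$-equivariant special degeneration is precisely what singles out horizontal divisors in the equality case, and it needs an argument; without it the ``only if'' in the equality clause does not follow. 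A smaller slip: the Donaldson--Futaki invariant of the product configuration associated with $n_\rho$ is proportional to $\beta(D_\rho)=-\langle\bc(\Psi),n_\rho\rangle$, not to $\langle\bc(\Psi),n_\rho\rangle-h_t(n_\rho)$ (the latter is $S(D_\rho)$ up to a constant), although the conclusion you draw from $\bc(\Psi)=0$ is still correct. Finally, Theorem \ref{Zhathm} as quoted in the paper is for boundary-free $X$; you would need the log version with a $T$-invariant $\Delta$, which is part of what \cite{ACC} and \cite{Liu} supply.
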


If $(X,\Delta)$ is toric, recall the following theorem
\cite{Blu,BJ}.
\begin{thm}\label{torvarthm}
Let $(X,\Delta)$ be log Fano and toric. Then it is log K-polystable
if and only if for any torus-invariant prime divisor $E$ on $X$,
$$
\frac{A_{X,\Delta}\left(E\right)}{S_{-K_{X,\Delta}}\left(E\right)}=1.
$$
\end{thm}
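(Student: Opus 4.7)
The plan is to translate the claimed ratio condition into the classical polytope-barycenter criterion for K-polystability of toric Fanos, via an elementary computation of $A$ and $S$ on torus-invariant prime divisors.

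First I would set up the standard toric dictionary: writing $\Delta=\sum_\rho a_\rho D_\rho$, the $\Q$-divisor $-K_{X,\Delta}$ corresponds to the polytope
$$
P=\bigl\{u\in M_\Q:\langle u,v_\rho\rangle\geq a_\rho-1\text{ for every ray }\rho\bigr\},
$$
where $v_\rho$ is the primitive generator of $\rho$. From \eqref{Adef} one reads off $A_{X,\Delta}(D_\rho)=1-a_\rho$. For the expected vanishing order, $-K_{X,\Delta}-xD_\rho$ is encoded by the truncated polytope $P_x:=P\cap\{\langle\,\cdot\,,v_\rho\rangle\geq a_\rho-1+x\}$, and a Fubini/layer-cake computation yields
$$
S_{-K_{X,\Delta}}(D_\rho)=\frac{1}{\vol P}\int_P\bigl(\langle u,v_\rho\rangle-(a_\rho-1)\bigr)\,du=\langle\bar u,v_\rho\rangle+A_{X,\Delta}(D_\rho),
$$
where $\bar u:=\frac{1}{\vol P}\int_P u\,du$ is the barycenter. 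Therefore the ratio $A_{X,\Delta}(D_\rho)/S_{-K_{X,\Delta}}(D_\rho)$ equals $1$ if and only if $\langle\bar u,v_\rho\rangle=0$.

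Next I would observe that for a complete fan the primitive generators $\{v_\rho\}$ positively span $N_\Q$, so the intersection of half-spaces $\bigcap_\rho\{u:\langle u,v_\rho\rangle\leq 0\}$ collapses to $\{0\}$. Consequently the three conditions (i) $A/S=1$ for every torus-invariant prime divisor $E$, (ii) $A/S\geq 1$ for every such $E$, and (iii) $\bar u=0$ are all equivalent; the last of these is the vanishing of the Futaki character in the toric picture.

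Finally I would invoke the classical Berman/Wang--Zhu characterization of K-polystability: a toric log Fano pair is log K-polystable if and only if the barycenter of its moment polytope lies at the origin. Combined with the two equivalences above, this yields the theorem. The main subtlety, rather than a genuine obstacle, is that K-polystability is strictly sandwiched between K-semistability and uniform K-stability, so the Blum--Jonsson equivalence $\delta\geq 1\Leftrightarrow$ K-semistable is not by itself sufficient; what rescues the argument is that in the toric category K-semistability, K-polystability, and the barycenter condition all coincide, so the stronger equality $A/S=1$ (not merely $\geq 1$) is automatic on the polystable locus.
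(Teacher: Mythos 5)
The paper offers no proof of this theorem: it states it as a recollection and cites \cite{Blu,BJ}. So there is no in-paper argument to compare against; I will assess your sketch on its own.

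Your computation is correct. With $\Delta=\sum_\rho a_\rho D_\rho$ and moment polytope $P=\{u:\langle u,v_\rho\rangle\geq a_\rho-1\}$ for $-K_{X,\Delta}$, the identity $A_{X,\Delta}(D_\rho)=1-a_\rho$ follows from \eqref{Adef}, and the layer-cake/Fubini step
$$
S_{-K_{X,\Delta}}(D_\rho)=\frac{1}{\vol P}\int_P\bigl(\langle u,v_\rho\rangle-(a_\rho-1)\bigr)\,du=\langle\bar u,v_\rho\rangle+A_{X,\Delta}(D_\rho)
$$
is the standard toric $S$-computation (the polytope $P_x$ correctly encodes the sections of $-K_{X,\Delta}-xD_\rho$ even past the nef threshold). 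Thus the ratio equals $1$ iff $\langle\bar u,v_\rho\rangle=0$, and since a complete fan has rays positively spanning $N_\Q$, the conditions ``$A/S=1$ for all $\rho$'', ``$A/S\geq1$ for all $\rho$'', and ``$\bar u=0$'' collapse to one — a genuine and useful point, since for toric log Fano one can check directly (as you hint) that $\bar u=0$ already forces $\beta(v)\geq0$ for every toric valuation $v$, so that K-semistability, K-polystability, and the barycenter condition really do coincide in this category. The only caveat is presentational: your last step invokes the barycenter criterion for log K-polystability of toric log Fano pairs as a black box; that criterion is precisely what the paper's cited references supply, so the delegation is legitimate here, but it should be made explicit (with a citation) that this is where the proof bottoms out, since in the logarithmic/singular setting the Wang--Zhu/Berman statement requires the Blum--Jonsson framework rather than the classical smooth-toric argument alone.
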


\section{Proof of main theorem}\label{Prf}

In this section we make use of the T-variety structure of the pairs
to compute the $\delta$-invariant, independent of the computation
from Section \ref{Upp}, and prove Theorem \ref{mainthm}.

Start from the toric variety $\F_n$. The $\C^*$-action along the
fibers can be lifted to $S$, equipping the surface with the
structure of a T-variety of complexity 1. We choose the f-divisor
$\displaystyle(\sum_{i=0}^m\cS_i\otimes p_i,\mathfrak{deg})$ on
$\PP^1$, where
$$
\cS_i=\begin{cases}
\left\{\left(-\infty,n\right],\left[n,+\infty\right)\right\},&i=0,\\
\left\{\left(-\infty,-1\right],\left[-1,0\right],\left[0,+\infty\right)\right\},&i\geq1,
\end{cases}
$$
(intersections omitted), and $\mathfrak{deg}=\varnothing$, as
explained in Figure \ref{fdiv}.

\begin{figure}[!ht]
    \centering
    \begin{tikzpicture}
        \draw(1,0)node[below left]{$\wt{C}_2$}--(3,0)node[below right]{$\wt{C}_1$}--(3,5)--(1,5)node[midway,above]{$\pi^*F$}--cycle;
        \draw(0,4)node[above right]{$E_1$}--(2,4);
        \draw(0,3)node[above right]{$E_2$}--(2,3);
        \draw(1,2)node[above left]{$\vdots$};
        \draw(0,1)node[above right]{$E_m$}--(2,1);
        \draw[<->](4,5)--(7.5,5)node[above right]{$\cS_0$};
        \filldraw(6.5,5)circle(1pt)node[above]{$n$};
        \draw[<->](4,4)--(7.5,4)node[above right]{$\cS_1$};
        \filldraw(5,4)circle(1pt)node[above]{$-1$};
        \filldraw(5.5,4)circle(1pt)node[above]{$0$};
        \draw[<->](4,3)--(7.5,3)node[above right]{$\cS_2$};
        \filldraw(5,3)circle(1pt)node[above]{$-1$};
        \filldraw(5.5,3)circle(1pt)node[above]{$0$};
        \draw(5.75,2)node[above]{$\vdots$};
        \draw[<->](4,1)--(7.5,1)node[above right]{$\cS_m$};
        \filldraw(5,1)circle(1pt)node[above]{$-1$};
        \filldraw(5.5,1)circle(1pt)node[above]{$0$};
        \draw(9,0)node[below right]{$\PP^1$}--(9,5);
        \filldraw(9,5)circle(1pt)node[above right]{$p_0$};
        \filldraw(9,4)circle(1pt)node[above right]{$p_1$};
        \filldraw(9,3)circle(1pt)node[above right]{$p_2$};
        \draw(9,2)node[above right]{$\vdots$};
        \filldraw(9,1)circle(1pt)node[above right]{$p_m$};
    \end{tikzpicture}
    \caption{An f-divisor corresponding to $S$. The rays $[0,+\infty)$ and $(-\infty,0]$ correspond to the horizontal divisors $\wt{C}_1$ and $\wt{C}_2$, respectively. The vertices correspond to vertical divisors. The vertex $n$ on $\cS_0$ correspond to the fiber over $p_0$. The vertices $-1$ and $0$ on $\cS_i$ correspond to $E_i$ and $\wt{F}_i$, respectively. The vertex $0$ on any trivial slice over $p\in\PP^1$ correspond to the fiber over $p$.}\label{fdiv}
\end{figure}
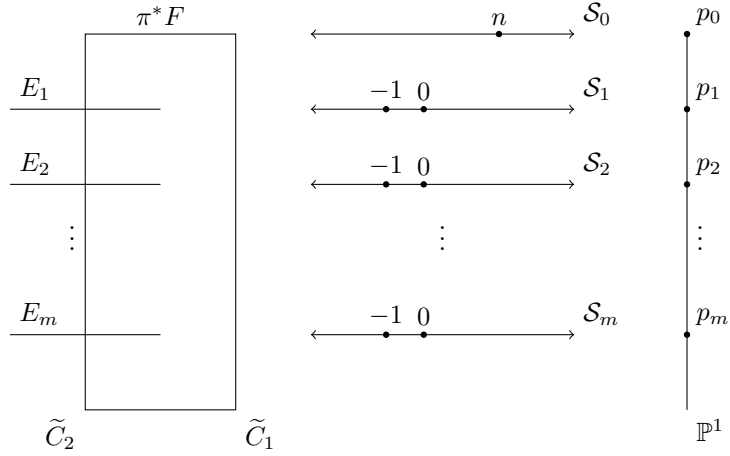

Recall Theorem \ref{anticantvar}. Consider
$$\begin{aligned}
-K_S&=\wt{C}_1+\wt{C}_2+2\pi^*F,\\
-K_{S,\Delta_\beta}&=\beta_1\wt{C}_1+\beta_2\wt{C}_2+2\pi^*F.
\end{aligned}$$
By Theorem \ref{divsupp}, $-K_{S,\Delta_\beta}$ corresponds to the
divisorial support function $h$ with linear part
$$
h_t\left(v\right)=\begin{cases}
\beta_2v,&v<0,\\
-\beta_1v,&v\geq0,
\end{cases}
$$
and satisfies
$$\begin{aligned}
h_{p_0}\left(n\right)&=-2,&h_{p_1}\left(-1\right)=h_{p_1}\left(0\right)=\cdots=h_{p_m}\left(-1\right)=h_{p_m}\left(0\right)&=0,
\end{aligned}$$
as shown in Figure \ref{h}.

\begin{figure}[!ht]
    \centering
    \begin{tikzpicture}
        \draw[<->](0,0)node[above left]{$h_{p_0}(v)$}--(10.5,0)node[right]{$\cS_0$};
        \filldraw(7.5,0)circle(1pt)node[below]{$n$};
        \draw(3.75,0)node[above]{$\beta_2(v-n)-2$};
        \draw(9,0)node[above]{$-\beta_1(v-n)-2$};
        \draw[<->](0,-1.5)node[above left]{$h_{p_i}(v)$}--(10.5,-1.5)node[right]{$\cS_i,i\geq1$};
        \filldraw(3,-1.5)circle(1pt)node[below]{$-1$};
        \filldraw(4.5,-1.5)circle(1pt)node[below]{$0$};
        \draw(1.5,-1.5)node[above]{$\beta_2(v+1)$};
        \draw(3.75,-1.5)node[above]{$0$};
        \draw(7.4,-1.5)node[above]{$-\beta_1v$};
    \end{tikzpicture}
    \caption{The divisorial support function $h$ corresponding to the log anti-canonical divisor $-K_{S,\Delta_\beta}$.}\label{h}
\end{figure}
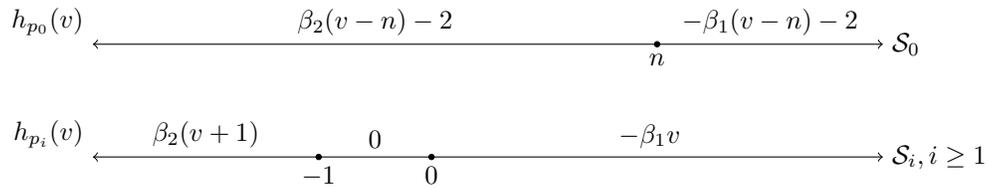

By Definition \ref{defdual}, the dual function $\Psi$ of $h$ is
given by
$$
\Psi_{p_i}\left(u\right)=\begin{cases}
nu+2,&i=0,\\
\min\left\{0,-u\right\},&i\geq1,
\end{cases}
$$
where the domain $\square_h=[-\beta_1,\beta_2]$, and its degree
$$
\deg\Psi\left(u\right)=\begin{cases}
nu+2,&u<0,\\
\left(n-m\right)u+2,&u\geq0.
\end{cases}
$$
By \eqref{defZha}, we compute its volume
$$
\vol\left(\Psi\right)=2\left(\beta_1+\beta_2\right)-\frac{n}{2}\beta_1^2+\frac{n-m}{2}\beta_2^2,
$$
and barycenter
$$\begin{aligned}
\bc\left(\Psi\right)&=-\frac{2}{\vol\left(-K_{S,\Delta_\beta}\right)}\left(\beta_1^2-\beta_2^2-\frac{1}{3}\left(n\beta_1^3+\left(n-m\right)\beta_2^3\right)\right),\\
\bc_P\left(\Psi\right)&=\begin{cases}
-\frac{2}{\vol\left(-K_{S,\Delta_\beta}\right)}\left(\left(2-n\beta_1+n\beta_2\right)\left(\beta_1+\beta_2\right)+\frac{1}{6}\left(n^2\beta_1^3+\left(n^2-m^2\right)\beta_2^3\right)\right),&i=0,\\
\frac{2}{\vol\left(-K_{S,\Delta_\beta}\right)}\left(2\left(\beta_1+\beta_2\right)-n\beta_1^2+\left(n-m+1\right)\beta_2^2+\frac{n^2\beta_1^3+\left(n-m+2\right)\left(n-m\right)\beta_2^3}{6}\right),&i\geq1.
\end{cases}
\end{aligned}$$
Recall that by Theorem \ref{Zhathm}, the Futaki character vanishes
precisely when $\bc(\Psi)=0$, i.e.,
\begin{equation}\label{FutVan}
\beta_1^2-\beta_2^2-\frac{1}{3}\left(n\beta_1^3+\left(n-m\right)\beta_2^3\right)=0.
\end{equation}
We are now ready to compute the $\delta$-invariant.
\begin{equation}\label{ASlist}\begin{aligned}
\frac{A_{S,\Delta_\beta}\left(\wt{C}_1\right)}{S_{-K_{S,\Delta_\beta}}\left(\wt{C}_1\right)}&=\frac{\beta_1}{\beta_1-\frac{2}{\vol\left(-K_{S,\Delta_\beta}\right)}\left(\beta_1^2-\beta_2^2-\frac{1}{3}\left(n\beta_1^3+\left(n-m\right)\beta_2^3\right)\right)},\\
\frac{A_{S,\Delta_\beta}\left(\wt{C}_2\right)}{S_{-K_{S,\Delta_\beta}}\left(\wt{C}_2\right)}&=\frac{\beta_2}{\beta_2+\frac{2}{\vol\left(-K_{S,\Delta_\beta}\right)}\left(\beta_1^2-\beta_2^2-\frac{1}{3}\left(n\beta_1^3+\left(n-m\right)\beta_2^3\right)\right)},\\
\frac{A_{S,\Delta_\beta}\left(E_i\right)}{S_{-K_{S,\Delta_\beta}}\left(E_i\right)}&=\frac{1}{1+\frac{1}{\vol\left(-K_{S,\Delta_\beta}\right)}\left(-\left(n-2\right)\beta_1^2+\left(n-m\right)\beta_2^2+\frac{n\left(n-2\right)\beta_1^3+\left(n-m\right)^2\beta_2^3}{3}\right)},\\
\frac{A_{S,\Delta_\beta}\left(F_i\right)}{S_{-K_{S,\Delta_\beta}}\left(F_i\right)}&=\frac{1}{1+\frac{1}{\vol\left(-K_{S,\Delta_\beta}\right)}\left(-n\beta_1^2+\left(n-m+2\right)\beta_2^2+\frac{n^2\beta_1^3+\left(n-m+2\right)\left(n-m\right)\beta_2^3}{3}\right)}.
\end{aligned}\end{equation}

This reproduces the computation in Section \ref{Upp} and hence
proves Theorem \ref{nec}, the necessary part of Theorem
\ref{mainthm}. We now prove the sufficient part of Theorem
\ref{mainthm}.

\begin{proof}[Proof of Theorem \ref{mainthm}]
Assume
$$
\beta_1^2-\beta_2^2-\frac{1}{3}\left(n\beta_1^3+\left(n-m\right)\beta_2^3\right)=0
$$
and $m\geq 2$. By \eqref{ASlist},
$$\begin{aligned}
\frac{A_{S,\Delta_\beta}\left(\wt{C}_1\right)}{S_{-K_{S,\Delta_\beta}}\left(\wt{C}_1\right)}&=\frac{A_{S,\Delta_\beta}\left(\wt{C}_2\right)}{S_{-K_{S,\Delta_\beta}}\left(\wt{C}_2\right)}=1,\\
\frac{A_{S,\Delta_\beta}\left(E_i\right)}{S_{-K_{S,\Delta_\beta}}\left(E_i\right)}&=\frac{A_{S,\Delta_\beta}\left(F_i\right)}{S_{-K_{S,\Delta_\beta}}\left(F_i\right)}=\frac{1}{1+\frac{1}{\vol\left(-K_{S,\Delta_\beta}\right)}\left(2-m\right)\beta_1^2\left(1-\frac{n}{3}\beta_1\right)}.
\end{aligned}$$

When $m\geq3$,
$$
\frac{A_{S,\Delta_\beta}\left(E_i\right)}{S_{-K_{S,\Delta_\beta}}\left(E_i\right)}=\frac{A_{S,\Delta_\beta}\left(F_i\right)}{S_{-K_{S,\Delta_\beta}}\left(F_i\right)}>1.
$$
Moreover, since $\pi^*F-F_i\geq0$,
$$
\frac{A_{S,\Delta_\beta}\left(\pi^*F\right)}{S_{-K_{S,\Delta_\beta}}\left(\pi^*F\right)}\geq\frac{A_{S,\Delta_\beta}\left(F_i\right)}{S_{-K_{S,\Delta_\beta}}\left(F_i\right)}>1.
$$
By \eqref{FutVan}, the Futaki character vanishes. Therefore by
Theorem \ref{Tvarthm}, the pair $(S,\Delta_\beta)$ is K-polystable.

When $m=2$, for $i=1,2$,
$$
\frac{A_{S,\Delta_\beta}\left(\wt{C}_i\right)}{S_{-K_{S,\Delta_\beta}}\left(\wt{C}_i\right)}=\frac{A_{S,\Delta_\beta}\left(E_i\right)}{S_{-K_{S,\Delta_\beta}}\left(E_i\right)}=\frac{A_{S,\Delta_\beta}\left(F_i\right)}{S_{-K_{S,\Delta_\beta}}\left(F_i\right)}=1.
$$
Therefore by Theorem \ref{torvarthm}, the pair $(S,\Delta_\beta)$ is
K-polystable.
\end{proof}
\begin{rmk}
By \cite[Theorem 5.1]{Fuj2}, the vanishing of the Futaki invariant
also follows directly from the fact that
$A_{S,\Delta_\beta}(\wt{C}_i)=S_{-K_{S,\Delta_\beta}}(\wt{C}_i)$.
\end{rmk}

\end{document}